\documentclass[a4paper,11pt]{article}
\usepackage{inputenc}
\usepackage{authblk}
\usepackage[table]{xcolor}
\usepackage{color}
\usepackage{amsmath}
\usepackage{amssymb}
\usepackage{amsthm}
\usepackage{booktabs}
\usepackage{xcolor,graphicx,float}
\usepackage{hyperref}
\usepackage{soul}
\usepackage{indentfirst}
\hypersetup{colorlinks=true,
	linkcolor=blue,
	anchorcolor=blue,
	citecolor=blue}
\usepackage{enumitem}
\setlist[itemize]{itemsep=0pt, topsep=0pt}

\theoremstyle{plain}
\newtheorem{theorem}{\bf Theorem}[section]
\newtheorem{lemma}[theorem]{\bf Lemma}
\newtheorem{proposition}[theorem]{\bf Proposition}

\theoremstyle{remark}

\newtheorem{remark}{\bf Remark}
\newtheorem{construction}{\bf Construction}

\numberwithin{equation}{section}

\newcommand{\spn}[2]{\genfrac{\{}{\}}{0pt}{}{#1}{#2}}
\title{\bf On $r$-cross $t$-intersecting families of partitions}
\author[1]{Jie Wen\thanks{E-mail: \text{jwen@mail.bnu.edu.cn}}}
\author[1]{Benjian Lv\thanks{Corresponding author. E-mail: \text{bjlv@bnu.edu.cn}}}
\affil[1]{\small Laboratory of Mathematics and Complex Systems (Ministry of Education), School of Mathematical Sciences, Beijing Normal University, Beijing 100875, China}
\date{}

\begin{document}
	\renewcommand{\baselinestretch}{1.2}
	\maketitle
	\begin{abstract}
In this paper, we address several intersection problems for $r$-cross $t$-intersecting families of partitions. A  $k$-partition of an $n$-set $X$ is a set of $k$ pairwise disjoint non-empty subsets whose union is $X$. For $1\leq i\leq r$, let $\mathcal{F}_i$ be a family of $k_i$-partitions of $X$. We say that $\mathcal{F}_1,\mathcal{F}_2,\ldots,\mathcal{F}_r$ are $r$-cross $t$-intersecting if $|\cap_{i=1}^{r}F_i|\geq t$ for all $F_i\in\mathcal{F}_i$. The families are called non-trivial if $|\cap_{i=1}^r(\cap_{F\in\mathcal{F}_i}F)|<t$. Proving an Erd\H{o}s–Ko–Rado type theorem, we determine the families maximizing $\prod_{i=1}^r|\mathcal{F}_i|$. We further determine non-trivial $r$-cross $t$-intersecting families with maximum product of sizes; this result also serves as a  Hilton–Milner type theorem. In particular, for $r=2$ there are two potential structures for optimal families, and for $r\geq3$ exactly one remains.	
	
		\medskip
		\noindent {\em AMS classification:}\;05D05
		
		\noindent {\em Key words:}\;Erd\H{o}s–Ko–Rado type theorem;\;Hilton–Milner type theorem;\;Stirling partition number;\;$r$-cross $t$-intersecting family;\;partition
		
	\end{abstract}
	\section{Introduction}
We begin with some standard notations. Throughout this paper, lowercase letters denote positive integers, unless otherwise stated. 
We set $[n]:=\{1,2,\ldots,n\}$, and set $[i,j]=\{i,i+1,\ldots,j\}$ for $i\leq j$.  Denote by $\binom{[n]}{k}$ the family of all $k$-subsets of $[n]$. For a family $\mathcal{F}$ of sets, we use  $\cup\mathcal{F}$ and $\cap\mathcal{F}$ to denote the union and the intersection of its members, respectively. We write  $\overline{A}=[n]\setminus A$ for $A\subseteq[n]$.

Originating from the celebrated  Erd\H{o}s–Ko–Rado theorem \cite{Erdos-Ko-Rado-1961}, intersection problems have evolved into a longstanding and active direction of research in extremal set theory. A \emph{$t$-intersecting} family in $\binom{[n]}{k}$ is one whose pairwise intersections are all have size at least $t$. The theorem states that there exists $f(k,t)$ such that, for all $n\geq f(k,t)$, every $t$-intersecting family has size at most $\binom{n-t}{k-t}$. Moreover, if $n>f(k,t)$, then  $\left\{F\in\binom{[n]}{k}:[t]\subseteq F\right\}$ is, up to a permutation on $[n]$, the unique family achieving the upper bound. It is well-known that the least possible value of $f(k,t)$ equals $(t+1)(k-t+1)$. This is established in the original paper \cite{Erdos-Ko-Rado-1961} for $t=1$. Frankl proved this in the breakthrough paper \cite{Frankl-1976} for $t\geq 15$, and subsequently Wilson \cite{Wilson-1984} determined the exact value for $2\leq t\leq14$, with an ingenious proof valid for all $t$. Another classical line of research concerns the study of large $t$-intersecting families. A family $\mathcal{F}\subseteq\binom{[n]}{k}$ is called \emph{non-trivial} if $|\cap\mathcal{F}|<t$. The study of non-trivial $t$-intersecting families was initiated by Hilton and Milner \cite{Hilton-Milner-1967}, who determined the largest such families for $t=1$. For general $t\geq2$ and large $n$, the problem is settled by Frankl \cite{Frankl-1976}. In \cite{Frankl-1987}, Frankl established  a far-reaching sharpening of the Hilton–Milner theorem, characterizing largest $1$-intersecting families with conditions on the maximum degree. Ahlswede and Khachatrian \cite{Ahlswede-Khachatrian-1996} completely determined maximum-sized non-trivial $t$-intersecting families in $\binom{[n]}{k}$ for all $n,k$ and $t$.

For $r\geq2$ and $k_1,k_2,\ldots,k_r\geq t$, the families $\mathcal{F}_1\subseteq\binom{[n]}{k_1}$, $\mathcal{F}_2\subseteq\binom{[n]}{k_2},\ldots$, $\mathcal{F}_r\subseteq\binom{[n]}{k_r}$ are said to be \emph{$r$-cross $t$-intersecting} if $|F_1\cap\cdots\cap F_r|\geq t$ for all $F_i\in\mathcal{F}_i,1\leq i\leq r$. This serves as a natural generalization for the notion of `$t$-intersecting'. There is a vast and excellent literature on investigating $r$-cross $t$-intersecting families with large product or sum of sizes. To limit the scope of this paper, we restrict our attention to the product version, and refer the readers to \cite{Frankl-2024,Frankl-Tokushige-1998,Gupta et al.-2023,Kupavskii-Zakharov-2018,Wang-Zhang,Wu-Feng-Li-2025} for the sum version.

Let  $\mathcal{F}_1\subseteq\binom{[n]}{k_1}$, $\mathcal{F}_2\subseteq\binom{[n]}{k_2},\ldots$, $\mathcal{F}_r\subseteq\binom{[n]}{k_r}$ be $r$-cross $t$-intersecting. Pyber \cite{Pyber} established a product version of the  Erd\H{o}s-Ko-Rado theorem, that is, for $r=2$ and $t=1$,  $|\mathcal{F}_1||\mathcal{F}_2|\leq\binom{n-1}{k_1-1}\binom{n-1}{k_2-1}$ holds when $n$ is large. Matsumoto and Tokushige \cite{Matsumoto-Tokushige-1989} found  the least possible $n$ for which the upper bound holds. In \cite{Frankl-Tokushige-2011}, Frankl and Tokushige proved that for $t=1$, $k_1=\cdots=k_r=k$ and the natural restriction $n\geq rk/(r-1)$,  $\prod_{i=1}^{r}|\mathcal{F}_i|\leq\binom{n-1}{k-1}^r$ holds, with an elegant proof via Katona's Circle (see also \cite{Frankl-2021}). For general $t\geq2$, $r=2$ and $k_1=k_2=k$, Tokushige \cite{Tokushige-2013} conjectured that   $|\mathcal{F}_1||\mathcal{F}_2|\leq\binom{n-t}{k-t}^2$ for $n\geq(t+1)(k-t+1)$, and if $n>(t+1)(k-t+1)$, then equality holds only if both $\mathcal{F}_1$ and  $\mathcal{F}_2$ consist of $k$-subsets containing $t$ fixed elements. After a series of  contributions \cite{Tokushige-2010,Tokushige-2013,Frankl et al.-2014,Borg-2016} toward the conjecture, it was completely settled very recently by Zhang and Wu \cite{Zhang-Wu-2025} for $t\geq3$, and by Tanaka and Tokushige \cite{Tanaka-Tokushige-2025} for $t=2$.

Besides studying the maximum product, extensive works have focused on describing the structure of large $r$-cross $t$-intersecting families, in the sense that they have a large product of sizes. Frankl and Kupavskii  \cite{Frankl-Kupavskii-2017} generalized Pyber's theorem by proving a size-sensitive inequality for $2$-cross $1$-intersecting families. Other  works mainly considered those families under the natural condition that a common $t$-subset is forbidden, including  $|\cap_{i=1}^r(\cap\mathcal{F}_i)|<t$ (cf. \cite{Cao-Lu-Lv-Wang-2024,He-Li-Wu-Zhang-2026}) or  $\max\{|\cap\mathcal{F}_i|:i\in[r]\}<t$ (cf. \cite{Frankl-Wang-2023,Frankl-Wang-2024}). 

Intersection problems arise naturally for many objects, and their study has inspired a variety of insightful results. For systematic introduction on intersection problems, we refer the readers to survey papers \cite{Deza-Frankl-1983,Frankl-Tokushige-2016} and monographs \cite{Frankl-Tokushige-book,Godsil-Meagher-book}. 

The aim of the present paper is to study intersection problems for partitions. A \emph{partition} is a set of pairwise disjoint non-empty subsets, and the members of a partition are called \emph{blocks}. A \emph{$k$-partition of $M$} is a partition consisting of $k$ blocks whose union equals $M$. We use $\spn{[n]}{k}$ to denote the set of all $k$-partitions of $[n]$. This notation arises from the Stirling partition number
\begin{equation}\label{equspn}
	\spn{n}{k}=\frac{1}{k!}\sum_{j=0}^{k}(-1)^j\binom{k}{j}(k-j)^n.
\end{equation}
Note that $\spn{[n]}{k}$ has size $\spn{n}{k}$. Set $\spn{n}{a}=0$ for $a\leq0$. Two families $\mathcal{G}_1,\mathcal{G}_2\subseteq\spn{[n]}{k}$ are \emph{isomorphic}, denoted $\mathcal{G}_1\cong\mathcal{G}_2$, if they are the same up to some permutation on $[n]$. For a subset $B$ of $[n]$, denote by $[B]:=\{\{i\}:i\in B\}$ the partition of $B$ containing only singletons. A family in $\spn{[n]}{k}$ is said to be \emph{$t$-intersecting} if every two of its members has at least $t$ blocks in common. This coincides with the classical notion in the setting of families of sets. We note that there have been some results on intersection problems for other classes of partitions (cf. e.g., \cite{Meagher-Moura-2005,Ku-Renshaw-2008,Ku-Wong-2012,Ku-Wong-2013,Ku-Wong-2020,Yao-Cao-Zhang}). The following is a $k$-partition version of the Erd\H{o}s-Ko-Rado theorem. Let us set
 \begin{equation}\label{equfunlkt}
	L(k,t):=(t+1)+(k-t+1)\cdot\log_2(t+1)(k-t+1).
\end{equation}
\begin{theorem}[\cite{Erdos-Szekely-2000,Kupavskii-2023,Wen-Lv-2026}]\label{thm0}
Let $k\geq t+2$ and $\mathcal{F}\subseteq\spn{[n]}{k}$ be   $t$-intersecting. If $n\geq L(k,t)$, then 
\begin{equation*}
|\mathcal{F}|\leq\spn{n-t}{k-t}.
\end{equation*}
Moreover, equality holds if and only if  $\mathcal{F}\cong\left\{F\in\spn{[n]}{k}:[[t]]\subseteq F\right\}$.
\end{theorem}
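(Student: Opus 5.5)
The plan is the standard "kernel" argument: bound the family by a fixed set of $t$ blocks when one exists, and otherwise show that the lack of a common $t$-set of blocks costs a factor that is tiny once $n\geq L(k,t)$.

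\textbf{Step 1 (a common $t$-set of blocks).} Suppose $|\cap\mathcal{F}|\geq t$, so $\cap\mathcal{F}$ contains $t$ blocks $B_1,\ldots,B_t$. Every member of $\mathcal{F}$ is then these $t$ blocks together with a $(k-t)$-partition of $[n]\setminus\cup_j B_j$. Hence
\[
|\mathcal{F}|\leq\spn{n-\sum_j|B_j|}{k-t}\leq\spn{n-t}{k-t},
\]
because $\spn{m}{k-t}$ is increasing in $m$. Equality forces every $|B_j|=1$ and $\mathcal{F}$ to be the full star. This gives the claimed extremal family, up to isomorphism.

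\textbf{Step 2 (no common $t$-set: many members through a fixed block set).} Now assume $|\cap\mathcal{F}|<t$. Counting only the blocks of partitions, $\mathcal{F}$ is a $t$-intersecting family of $k$-sets over the ground set of all nonempty subsets of $[n]$. Pick $F_0\in\mathcal{F}$. Every member meets $F_0$ in at least $t$ blocks, so
\[
|\mathcal{F}|\leq\binom{k}{t}\max_{T}|\mathcal{F}(T)|,
\]
where $T$ ranges over $t$-subsets of $F_0$ and $\mathcal{F}(T)$ is the set of members containing $T$.

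\textbf{Step 3 (bounding $\mathcal{F}(T)$ by iterated branching).} Since $T\not\subseteq\cap\mathcal{F}$, some $G\in\mathcal{F}$ does not contain $T$. Every $F\in\mathcal{F}(T)$ must share at least $t$ blocks with $G$, so it contains a block $B\in G\setminus T$. Branching on this block and iterating, as in Frankl's kernel argument, $\mathcal{F}(T)$ is covered by at most $k^{\,k-t+1}$ or so subfamilies. Each subfamily consists of partitions containing $T$ together with extra prescribed blocks; the prescribed blocks must be pairwise disjoint, and there are at least $t+1$ of them. Each such subfamily then has size at most $\spn{n-t-1}{k-t-1}$.

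\textbf{Step 4 (comparing the bound with the star).} The conclusion would follow from
\[
C(k,t)\spn{n-t-1}{k-t-1}<\spn{n-t}{k-t},
\]
with $C(k,t)$ of order $\binom{k}{t}\,k^{\,k-t}$. Here I use the recurrence $\spn{m}{j}=j\spn{m-1}{j}+\spn{m-1}{j-1}$ together with the growth estimate $\spn{m}{j}/\spn{m-1}{j-1}\geq (j/2)^{\,m-j}$, roughly. Taking logarithms, this is exactly where a condition of the form $n\geq (t+1)+(k-t+1)\log_2\bigl((t+1)(k-t+1)\bigr)$ should appear, so the threshold $L(k,t)$ is the natural one.

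\textbf{Main obstacle.} The hard part is keeping $C(k,t)$ small enough that the bound closes at $n=L(k,t)$ rather than at some larger threshold. The crude count $\binom{k}{t}k^{k-t}$ may be too weak. The first thing I would try is a sharper covering argument. I would take a minimal family of "transversal" block sets of size about $t+1$, of which there are at most about $(t+1)(k-t+1)$ relevant choices. I would then use that each additional forced disjoint block reduces the count by a factor of about $2^{\,n-\cdots}$, since $\spn{m}{j}\geq 2^{m-j}$-type ratios hold for $j\geq2$. The hypothesis $k\geq t+2$ is what keeps $k-t\geq2$, so these ratios are genuinely exponential.
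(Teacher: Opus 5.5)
Your Step~1 is fine, but Steps~2--4 do not reach the threshold $n\geq L(k,t)$, and the reason is the one you suspect. Covering $\mathcal{F}$ by the $t$-subsets of an arbitrary member $F_0$ costs a factor $\binom{k}{t}$, and nothing later pays for it.

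Take the best version of your Step~3: a single branching step. By Lemmas~\ref{lemmainductive} and~\ref{lemmamono'} it gives $|\mathcal{F}(T)|\leq (k-t+1)\spn{n-t-1}{k-t-1}$. Iterating further only multiplies the number of pieces, since you keep bounding each piece by the $(t+1)$-block count. So you would need $\spn{n-t}{k-t}>\binom{k}{t}(k-t+1)\spn{n-t-1}{k-t-1}$.

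By Lemma~\ref{lemmaestmtspn} this ratio is only about $\bigl(\frac{k-t}{k-t-1}\bigr)^{n-t-1}$, so you need roughly $n-t\gtrsim(k-t-1)\ln\bigl(\binom{k}{t}(k-t+1)\bigr)$. For $k=2t$ this is of order $t^2$, whereas $L(2t,t)=\Theta(t\log t)$. With your $C(k,t)\sim\binom{k}{t}k^{k-t}$ the requirement becomes quadratic in $k-t$. So the claim in Step~4 that taking logarithms produces exactly $L(k,t)$ is not right: $L(k,t)$ is tailored to the constant $(t+1)(k-t+1)$.

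The growth estimate you quote is also false. For $j\geq4$ and large $m$ one has $\spn{m}{j}/\spn{m-1}{j-1}\approx(j/(j-1))^{m-1}$, far below $(j/2)^{m-j}$. The usable estimate is Lemma~\ref{lemmaspnntkt}(ii).

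The missing idea is to work with a \emph{minimum} $t$-cover $T^\ast$ of $\mathcal{F}$, of size $\tau=\tau_t(\mathcal{F})\leq k$, and to let $\tau$ control the depth of branching. This is how the paper argues: the statement follows from Theorem~\ref{thm1} with $\mathcal{F}_1=\mathcal{F}_2=\mathcal{F}$. The steps are:
\begin{itemize}
\item One has $|\mathcal{F}|\leq\binom{\tau}{t}|\mathcal{F}_H|$ for some $H\in\binom{T^\ast}{t}$.
\item While the current partition has fewer than $\tau$ blocks it is not a $t$-cover. So some member meets it in $c<t$ blocks, and you branch at cost $\binom{k-c}{t-c}\leq(k-t+1)^{t-c}$. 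You stop once at least $\tau$ blocks are prescribed.
\item This yields Lemma~\ref{lemmakey}: $|\mathcal{F}|\leq\binom{\tau}{t}\max\bigl\{(k-t+1)^{\tau-t}\spn{n-\tau}{k-\tau},(k-t+1)^{k-t}\bigr\}$.
\item Since $\frac{m+1}{m-t+1}\leq t+1$, and Lemma~\ref{lemmaspn2lkt}(i) gives $\spn{n-m}{k-m}>(t+1)(k-t+1)\spn{n-m-1}{k-m-1}$, the quantity $\binom{m}{t}(k-t+1)^{m-t}\spn{n-m}{k-m}$ strictly decreases in $m$.
\item The constant term is handled by Lemma~\ref{lemmamono''}.
\end{itemize}

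So for $\tau\geq t+1$ the bound is strictly below $\spn{n-t}{k-t}$ (Lemma~\ref{lemmamono}(iii)). This also gives the strictness your uniqueness claim needs. The binding case is $\tau=t+1$, where the bound is $(t+1)(k-t+1)\spn{n-t-1}{k-t-1}$, and $n\geq L(k,t)$ is exactly what makes it smaller than $\spn{n-t}{k-t}$.

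Your closing remark about transversals of size about $t+1$ points this way. It does not address $\tau\geq t+2$, though. There the factor $\binom{\tau}{t}$ must be paid for by descending all the way to $\spn{n-\tau}{k-\tau}$.
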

Erd\H{o}s and Sz\'{e}kely \cite{Erdos-Szekely-2000} proved the upper bound for sufficiently large $n$ depending on $k$. Kupavskii \cite{Kupavskii-2023} proved the theorem for $n\geq2k\log_2n$ and $n\geq48$ via the spread method. We note that in \cite{Kupavskii-2023}, the author addressed several Erd\H{o}s–Ko–Rado type questions 
for families of partitions. The previous lower bound of Theorem \ref{thm0} is proved by the authors in \cite{Wen-Lv-2026}. We mention that, by an easy but tedious computation, if $k\sim(1+a)t$ as $t\to\infty$ for a fixed $a>0$, then the least value of $n$ for which Theorem \ref{thm0} holds equals $\Theta(L(k,t))$. In \cite{Wen-Lv-2026}, the authors  also proved the following  Hilton-Milner type theorem for $k$-partitions.
\begin{theorem}[\cite{Wen-Lv-2026}]\label{thmhm}
	Let $k\geq t+3$ and $n\geq2L(k,t)$. Suppose that  $\mathcal{F}\subseteq\spn{[n]}{k}$ is a $t$-intersecting family with $|\cap\mathcal{F}|<t$. The following hold.
	\begin{itemize}
		\item[\rm(i)]If $k\geq2t+3$, then $|\mathcal{F}|\leq\sum_{j=1}^{k-t}(-1)^{j-1}\binom{k-t}{j}\spn{n-t-j}{k-t-j}+t$, with equality precisely if
\begin{align*}
\mathcal{F}\cong&\left\{F\in\spn{[n]}{k}:[[t]]\subseteq F,\;F\cap[[t+1,k]]\neq\emptyset\right\}\\&\cup\{([[k]]\setminus\{i\})\cup\{\{i\}\cup[k+1,n]\}:i\in[t]\}.
					\end{align*}
		\item[\rm(ii)] If $k\leq 2t+2$, then 
		$|\mathcal{F}|\leq(t+2)\spn{n-t-1}{k-t-1}-(t+1)\spn{n-t-2}{k-t-2}$, with equality precisely if $\mathcal{F}\cong\left\{F\in\spn{[n]}{k}:|F\cap[[t+2]]|\geq t+1\right\}$, or $(k,t)=(4,1)$ and $\mathcal{F}$ is isomorphic to the family in {\rm(i)}.
	\end{itemize}
\end{theorem}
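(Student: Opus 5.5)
We plan a stability argument combined with the covering-family (kernel) method familiar from the set case, using Theorem \ref{thm0} and the counting bound $\spn{n-t-1}{k-t-1}\ll\spn{n-t}{k-t}$ for $n\geq 2L(k,t)$.

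\textbf{Step 1: reduction to a $(t+1)$-covering structure.} For a $t$-intersecting $\mathcal{F}\subseteq\spn{[n]}{k}$ with $|\cap\mathcal{F}|<t$, let $\mathcal{T}$ be the family of $t$-sets of blocks $T$ (pairwise disjoint non-empty subsets) such that $|T\cap F|\geq t$ fails for no $F$, i.e. $T\subseteq F$ ... more precisely we consider the \emph{$t$-covers}: partial partitions $S$ with $|S\cap F|\geq t$ for all $F\in\mathcal{F}$, and $\tau_t(\mathcal{F})$ the minimum size of such $S$. Any member $F_0\in\mathcal{F}$ is a $t$-cover, so $\tau_t\leq k$. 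The key counting fact: the number of $F\in\spn{[n]}{k}$ containing a fixed partial partition of $s$ blocks covering $m$ points is at most $\spn{n-m}{k-s}\leq \spn{n-s}{k-s}$, and $\spn{n-s-1}{k-s-1}/\spn{n-s}{k-s}$ is exponentially small (roughly $(k-s)^{-(n-s)}$ ratio-wise; for $n\geq2L(k,t)$ at most $1/(C(t+1)(k-t+1))$). Hence if $\tau_t\geq t+2$, a branching argument (pick $F$ missing a current partial cover, branch over at most $\binom{k}{\cdot}$ choices of blocks) bounds $|\mathcal{F}|$ by $k^{O(k-t)}\spn{n-t-2}{k-t-2}$, which is below both claimed bounds. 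So we may assume $\tau_t=t+1$ (since $\tau_t=t$ contradicts non-triviality).

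\textbf{Step 2: structure when $\tau_t=t+1$.} Let $\mathcal{S}$ be the family of all $(t+1)$-block $t$-covers; then $|\mathcal{F}|\leq\sum_{S\in\mathcal{S}}\#\{F:|F\cap S|\ge t\}$ restricted appropriately, and the dominant contribution comes from pairs $(S,T)$ with $T\subset S$, $|T|=t$. Two cases arise. (a) All $t$-subsets $T$ contributing large degree share a common $t$-set $[[t]]$ of singleton-type blocks: then $\mathcal{F}$ mostly lies in $\{F:[[t]]\subseteq F\}$, and the few members $G$ not containing $[[t]]$ force every other $F$ to meet $G$ in $t$ blocks; optimizing, one $G$-type member per missing block gives family (i), whose size is computed by inclusion–exclusion over $F\cap[[t+1,k]]\neq\emptyset$, giving $\sum_{j}(-1)^{j-1}\binom{k-t}{j}\spn{n-t-j}{k-t-j}+t$. (b) There are at least two distinct $t$-sets in the kernel: then the $t$-intersecting property forces the kernel's union to be a $(t+2)$-block partial partition $P$ with $\mathcal{F}\subseteq\{F:|F\cap P|\geq t+1\}$, of size $(t+2)\spn{n-t-1}{k-t-1}-(t+1)\spn{n-t-2}{k-t-2}$ by inclusion–exclusion; here we must show blocks of $P$ can be taken singletons (shrinking block sizes only increases counts, since $\spn{n-m}{k-s}$ is maximized at $m=s$).

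\textbf{Step 3: comparison.} Finally compare the two expressions: (i) is $\approx (k-t)\spn{n-t-1}{k-t-1}$ and (ii) is $\approx (t+2)\spn{n-t-1}{k-t-1}$, so (i) wins iff $k-t>t+2$, i.e. $k\geq2t+3$, with the second-order terms deciding the tie case $k=2t+2$; one checks via the $-\binom{k-t}{2}\spn{n-t-2}{k-t-2}$ term versus $-(t+1)\spn{n-t-2}{k-t-2}$ that (ii) is strictly larger unless $\binom{t+2}{2}=t+1$ adjustments coincide, which happens only for $(k,t)=(4,1)$, giving the stated exceptional equality. The main obstacle I expect is Step 2(a)/(b) uniqueness: showing rigorously that non-dominant members contribute only lower-order terms, which needs sharp ratio estimates $\spn{n-t-2}{k-t-2}\leq\epsilon\spn{n-t-1}{k-t-1}$ with explicit $\epsilon$ valid already at $n=2L(k,t)$; I would derive these from \eqref{equspn} by bounding $\spn{n}{k}$ between $k^n/k!\,(1-k(1-1/k)^n)$ and $k^n/k!$.
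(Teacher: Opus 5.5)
The paper does not reprove this theorem; it is quoted from the authors' earlier work. Its cross version in Section~\ref{section3} has the architecture you propose: Lemma~\ref{lemmat+2small} disposes of unfavourable covering numbers, Lemmas~\ref{lemmanottintsmall} and~\ref{lemmathm2} analyse the minimum covers, and Section~\ref{sectionappendix} compares the extremal families.

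\textbf{Gap in Step~1.} This comes from misestimating Stirling ratios. $\spn{n-s-1}{k-s-1}/\spn{n-s}{k-s}$ behaves roughly like $e^{-(n-s)/(k-s)}$, not $(k-s)^{-(n-s)}$. At $n=2L(k,t)=\Theta(k\log k)$ it is therefore only polynomially small. Lemma~\ref{lemmaspn2lkt}~(ii) gives $((t+1)(k-t+1))^{-2}$, your own figure $1/(C(t+1)(k-t+1))$ says the same, and for large $k-t$ the truth is $((t+1)(k-t+1))^{-O(1)}$. Hence a bound $k^{O(k-t)}\spn{n-t-2}{k-t-2}$ is far above $\spn{n-t-1}{k-t-1}$ once $k-t$ is large. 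Even one branching over the $t$-subsets of an arbitrary $F_0\in\mathcal{F}$ costs $\binom{k}{t}$, which is exponential in $t$ when $k\approx 2t$ and cannot be absorbed. What works is the accounting of Lemmas~\ref{lemmakey} and~\ref{lemmaubprod}, applied with $\mathcal{G}=\mathcal{F}$ and $\ell=k$. Branch from a \emph{minimum} $t$-cover (factor $\binom{\tau_t}{t}$), lose only a factor $k-t+1$ per added block via Lemma~\ref{lemmainductive}, and use the monotonicity in Lemma~\ref{lemmamono}. This gives $|\mathcal{F}|\le\binom{t+2}{2}(k-t+1)^2\spn{n-t-2}{k-t-2}<\spn{n-t-1}{k-t-1}$ when $\tau_t\ge t+2$ and $k\ge t+4$, and $|\mathcal{F}|\le 4^3\binom{t+3}{3}$ when $k=t+3$.

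\textbf{Gap in Step~2.} This step carries the theorem, but it is asserted rather than proved and lacks a key lemma. Your dichotomy (common $t$-core versus containment in a $(t+2)$-partition) is the classification of $t$-intersecting families of $(t+1)$-sets. So you must first show that the family $\mathcal{T}$ of $(t+1)$-block $t$-covers is $t$-intersecting, or else that $|\mathcal{F}|$ is small. This does not follow from $\mathcal{F}$ being $t$-intersecting; it is the single-family analogue of Lemma~\ref{lemmanottintsmall}. It is not a formality. Suppose $S_1,S_2\in\mathcal{T}$ share only $t-1$ blocks. Then, up to $O(t\spn{n-t-2}{k-t-2})$ exceptions, every member contains one of the four $(t+1)$-partitions $(S_1\cap S_2)\cup\{x,y\}$. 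The crude count $4\spn{n-t-1}{k-t-1}$ exceeds the target $3\spn{n-2}{2}-2$ at $(k,t)=(4,1)$, so you must exploit that the corresponding pieces are cross $t$-intersecting.

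In the sunflower case $\mathcal{T}=\{X\cup\{B\}:B\in\mathcal{B}\}$, with $M=X\cup\mathcal{B}$, you still need the following.
\begin{itemize}
\item Every $F\not\supseteq X$ contains $M\setminus\{B_0\}$ for some $B_0\in X$. Hence $M$ is a partition with $|M|\le k$, and only $|M|=k$ gives family~(i).
\item For $|M|<k$, members of $\mathcal{F}_X$ missing $\mathcal{B}$ contain $X\cup\{C\}$ for a block $C\notin M$ of a fixed $G_0\in\mathcal{F}\setminus\mathcal{F}_X$. Since $X\cup\{C\}$ is not a cover, Lemma~\ref{lemmainductive} bounds these members.
\item For $|M|\in\{t+1,t+2\}$, the available bounds on $\mathcal{F}\setminus\mathcal{F}_X$ are of the same order $\spn{n-t-1}{k-t-1}$ as $|\mathcal{F}_X|$. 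In Lemma~\ref{lemmathm2} they were negligible against $\spn{n-t}{\ell-t}$, but here a finer count is needed, e.g.\ splitting $\mathcal{F}$ over all $t$-subsets of the unique cover when $|M|=t+1$.
\end{itemize}

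\textbf{Step~3 and a smaller point.} Step~3 is right in substance. At $k=2t+2$ the difference of the two bounds is $\binom{t+1}{2}\spn{n-t-2}{t}-\binom{t+2}{3}\spn{n-t-3}{t-1}+\cdots-t$, which vanishes exactly for $t=1$ because $\spn{n-3}{1}=1$. However, it only compares the two extremal families, so without the work above neither the bounds nor the equality cases follow. Also, the claim that replacing the blocks of $P$ by singletons increases the count needs an argument, because the inclusion–exclusion expression has negative terms.
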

We say that $\mathcal{F}_1\subseteq\spn{[n]}{k_1}$, $\mathcal{F}_2\subseteq\spn{[n]}{k_2},\ldots,\mathcal{F}_r\subseteq\spn{[n]}{k_r}$ are \emph{$r$-cross $t$-intersecting} if $|\cap_{i=1}^{r}F_i|\geq t$ for all $F_i\in\mathcal{F}_i,\;i\in[r]$.  Note that if two partitions of $[n]$ with both sizes at most $t+1$ share at least $t$ blocks, then they are identical. Therefore, if  $\max\{k_i:i\in[r]\}\leq t+1$, then such an $r$-tuple exists only if $k_1=\cdots=k_r$, and there is an $F\in\spn{[n]}{k_1}$ such that $\mathcal{F}_1=\cdots=\mathcal{F}_r=\{F\}$. Hence we assume $\max\{k_i:i\in[r]\}\geq t+2$ in the sequel.  Our main results focus on $r$-cross $t$-intersecting families with large product of sizes.  First, the following  Erd\H{o}s–Ko–Rado type theorem determines the families with maximum product of sizes.
\begin{theorem}\label{thm1}
Suppose $r\geq2$, $k_1\geq k_2\geq\cdots\geq k_r\geq t+1$, $k_1\geq t+2$ and $n\geq L(k_1,t)$. If $\mathcal{F}_1\subseteq\spn{[n]}{k_1}$, $\mathcal{F}_2\subseteq\spn{[n]}{k_2},\ldots,\mathcal{F}_r\subseteq\spn{[n]}{k_r}$ are $r$-cross $t$-intersecting, then
$$\prod_{i=1}^r|\mathcal{F}_i|\leq\prod_{i=1}^{r}\spn{n-t}{k_i-t}.$$
Moreover, equality holds if and only if there is a partition $X$ consisting of $t$ singletons such that  $\mathcal{F}_i=\left\{F\in\spn{[n]}{k_i}:X\subseteq F\right\},i=1,2,\ldots,r$.
\end{theorem}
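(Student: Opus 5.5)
We will reduce the $r$-cross problem to the single-family case and then use a stability/counting argument in the non-trivial case. Write $T_i=\cap\mathcal{F}_i$ for the set of blocks common to all members of $\mathcal{F}_i$, and set $T=\cap_{i=1}^rT_i$. We split according to whether $|T|\geq t$ or $|T|<t$.

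\textbf{Case $|T|\ge t$.} Pick a $t$-subset $X\subseteq T$. The blocks of $X$ are pairwise disjoint and contained in every member of every $\mathcal{F}_i$, so $|\mathcal{F}_i|\le|\{F\in\spn{[n]}{k_i}:X\subseteq F\}|=\spn{n-|\cup X|}{k_i-t}$. Since $\spn{m}{j}$ is increasing in $m$ for $j\ge1$ (and equals $1$ at $j=0$ when $m=0$), this is at most $\spn{n-t}{k_i-t}$. Equality forces $|\cup X|=t$ for every $i$ with $k_i>t$ (there is at least one, namely $k_1\ge t+2$), so $X$ consists of singletons, and it forces each $\mathcal{F}_i$ to be the full star. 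When $k_i=t+1$ the star of a singleton $X$ contains exactly one partition, and the bound still holds; equality again requires $\mathcal{F}_i$ to be that partition. This yields the characterization.

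\textbf{Case $|T|<t$.} We aim to show that the product is strictly smaller. First, if some $\mathcal{F}_j$ is a single partition $G$, the remaining families must satisfy a cross condition relative to $G$. Any $F_1\in\mathcal{F}_1$ must then share at least $t$ blocks with $G$, and jointly with the other families. The tool here is the following elementary count: the number of $k$-partitions of $[n]$ containing at least $t$ blocks of a fixed partition $G$ with $|G|=k'$ is at most $\binom{k'}{t}\spn{n-t}{k-t}$ times a small factor. This alone is too weak, so we argue more carefully.

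The key lemma is a \emph{covering bound}. Fix $F_2\in\mathcal{F}_2,\dots,F_r\in\mathcal{F}_r$ and let $H=F_2\cap\dots\cap F_r$, a partial partition with $|H|\le k_r$. Every $F\in\mathcal{F}_1$ contains a $t$-subset of $H$. So $|\mathcal{F}_1|\le\sum_{S\in\binom{H}{t}}\spn{n-|\cup S|}{k_1-t}$. Since $|T|<t$, for each $t$-set $S\subseteq H$ there is some family member missing a block of $S$. Iterating this branching (a Frankl-type kernel argument, exactly as in the proof of Theorem~\ref{thm0} from \cite{Wen-Lv-2026}) gives $|\mathcal{F}_i|\le C(k,t)\,\spn{n-t-1}{k_i-t-1}$ for at least one index $i$, with the other families bounded by $\binom{k}{t}\spn{n-t}{k_j-t}$.

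Concretely, we pick the index $i$ and an $S$ with $S\not\subseteq T_i$. Then we use the estimate $\spn{n-t-1}{k-t-1}/\spn{n-t}{k-t}\le (k-t)2^{-(n-t-1)/(k-t)}$-type ratio bounds, which the condition $n\ge L(k_1,t)$ is designed to make smaller than $\binom{k}{t}^{-(r-1)}(\dots)$. To avoid the factor $\binom{k}{t}^{r-1}$ exploding with $r$, we instead pair families. The product splits as $\prod_i|\mathcal{F}_i|$, and it suffices to show that some pair $(\mathcal{F}_1,\mathcal{F}_j)$ has product strictly less than $\spn{n-t}{k_1-t}\spn{n-t}{k_j-t}$, while each other $|\mathcal{F}_i|\le\spn{n-t}{k_i-t}$. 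The latter need not hold individually, though. So the real approach is the following: if some $|\mathcal{F}_i|>\spn{n-t}{k_i-t}$, then by Theorem~\ref{thmhm}-style structure $\mathcal{F}_i$ is not $t$-intersecting within itself in a trivial way, which forces every other family to be tiny (each of their members must meet many incompatible partitions in $t$ blocks). This gives an absorbing bound.

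\textbf{Main obstacle.} The main obstacle is this balancing step: controlling families that individually exceed the star size, uniformly in $r$, with only $n\ge L(k_1,t)$. I would first attempt to show $|\mathcal{F}_i|\le\binom{k_i}{t}\spn{n-t}{k_i-t}$ always, and that if $T_i$ has fewer than $t$ blocks then the others lose a factor $2^{(n-t)/(k-t)}/\mathrm{poly}(k)$. The definition of $L$ makes this loss exceed $\binom{k}{t}$.
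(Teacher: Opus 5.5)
Your trivial case $|\cap_{i}(\cap\mathcal{F}_i)|\ge t$ is fine. The non-trivial case, which is the whole content of the theorem, is not proved. The ``balancing'' obstacle you single out also has a short resolution that you missed. The families are pairwise cross $t$-intersecting and each index lies in exactly $r-1$ pairs, so $\prod_i|\mathcal{F}_i|^{r-1}=\prod_{i<j}|\mathcal{F}_i||\mathcal{F}_j|$. Applying the case $r=2$ to every pair gives the bound for all $r$, with no individual bound on any $|\mathcal{F}_i|$. This is valid since $n\ge L(k_1,t)\ge L(\max\{k_i,k_j\},t)$, and a pair with $k_i=k_j=t+1$ consists of one common partition. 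In case of equality every pair $(\mathcal{F}_1,\mathcal{F}_j)$ is an extremal star pair, and the centres coincide because $\mathcal{F}_1$ is the full star of each of them.

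Your substitute does not work. A family exceeding the star size, such as $\mathcal{D}(\ell,t)$ paired with $\mathcal{C}(k,t)$, need not be $t$-intersecting at all. So Theorem~\ref{thm0} or Theorem~\ref{thmhm} can only tell you that it is not, which gives no quantitative trade-off between its excess and the size of the other families; that trade-off is exactly the difficulty. Likewise, your split on whether some $T_i=\cap\mathcal{F}_i$ has fewer than $t$ blocks needs a justification you do not give. For general tuples every $T_i$ can have at least $t$ blocks: take $t=1$, $\mathcal{F}_2=\{G\}$ with $G=\{\{1,2\},\{3\},\{4\},[5,n]\}$, and $\mathcal{F}_1$ the $k$-partitions containing $\{1\}$, $\{2\}$ and some block of $G$. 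For maximal pairs, the implication from non-triviality to $\min_i|T_i|<t$ is precisely Lemma~\ref{lemmatt}, whose proof needs maximality and a counting argument.

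Even granting that, your counting for $r=2$ cannot close under $n\ge L(k_1,t)$. You pay $\binom{k}{t}$ (times a branching constant) up front and recover a single factor $\spn{n-t-1}{k-t-1}/\spn{n-t}{k-t}$. Take, say, $k_1=k_2=k$, $t\approx k/2$ and $n$ near $L(k,t)$. Then the ratio $\spn{n-t}{k-t}/\spn{n-t-1}{k-t-1}$, which is within a constant factor of $((k-t)/(k-t-1))^{n-t-1}$, is only polynomial in $k$, while $\binom{k}{t}$ is exponential.

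The missing idea is the covering-number bookkeeping of Lemmas~\ref{lemmakey} and~\ref{lemmaubprod}. Write $\tau_F=\tau_t(\mathcal{F})$ and $\tau_G=\tau_t(\mathcal{G})$. First, $|\mathcal{F}|\le\binom{\tau_F}{t}\max_H|\mathcal{F}_H|$, over $t$-subsets $H$ of a minimum $t$-cover of $\mathcal{F}$. Each degree is then bounded by branching along members of $\mathcal{G}$, at a cost of only $\ell-t+1$ per added block, so $|\mathcal{F}_H|\le\max\{(\ell-t+1)^{\tau_G-t}\spn{n-\tau_G}{k-\tau_G},(\ell-t+1)^{k-t}\}$. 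Multiply this by the symmetric bound for $\mathcal{G}$. The binomial $\binom{\tau_F}{t}$ is then paired with the $\mathcal{G}$-degree bound, which decays in $\tau_F$, and vice versa, giving $|\mathcal{F}||\mathcal{G}|\le g(\tau_G,k,\ell,t,n)\,g(\tau_F,\ell,k,t,n)$. Since $\binom{m+1}{t}/\binom{m}{t}\le t+1$, Lemma~\ref{lemmaspn2lkt} (via Lemma~\ref{lemmamono}) makes each factor at most the corresponding star size, with equality only at covering number $t$.

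Your plan lacks three things: this bound, Lemma~\ref{lemmatt}, and the separate classification for $k_2=t+1$ (Proposition~\ref{propell=t+1}), a case where Lemma~\ref{lemmaubprod} does not apply.
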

By setting $\mathcal{F}_1=\mathcal{F}_2=\cdots=\mathcal{F}_r$, one deduce directly Theorem \ref{thm0}. An $r$-tuple of $r$-cross $t$-intersecting families $\mathcal{F}_1\subseteq\spn{[n]}{k_1},\ldots,\mathcal{F}_r\subseteq\spn{[n]}{k_r}$ is  \emph{non-trivial} if $|\cap_{i=1}^r(\cap\mathcal{F}_i)|<t$. By a slightly abuse of notation, we say that the families are non-trivial. Otherwise, the families are \emph{trivial}. The families are \emph{maximal} if $\mathcal{F}_i=\mathcal{F}'_i,\;i=1,\ldots,r$ for all $r$-cross $t$-intersecting families $\mathcal{F}'_1,\mathcal{F}'_2,\ldots,\mathcal{F}'_r$ with $\mathcal{F}_i\subseteq\mathcal{F}'_i,\;i=1,\ldots,r$. For simplicity, we use `cross $t$-intersecting' instead of `$2$-cross $t$-intersecting'. The case that $\max\{k_i:i\in[r]\}=t+1$ turns out to be elementary, and we give a complete characterization of all such families; see Propositions \ref{propell=t+1} and \ref{propell=t+1'}. Hence, in what follows, we assume $\max\{k_i:i\in[r]\}\geq t+2$. Our next main result determines non-trivial cross $t$-intersecting families with maximum product of sizes. Before presenting it, let us introduce two constructions.
\begin{construction}
Let $M$ be an $\ell$-partition and let $X\subseteq M$ with $|X|=t$. Define
\begin{align*}
\mathcal{A}(k,X,M)&:=\left\{F\in\spn{[n]}{k}:X\subseteq F,\;F\cap(M\setminus X)\neq\emptyset\right\},\\
\mathcal{B}(\ell,X,M)&:=\left\{F\in\spn{[n]}{\ell}:X\subseteq F\right\}\cup\{(M\setminus\{B\})\cup\{B\cup\overline{\cup M}\}:B\in X\}.
\end{align*}
Set $\mathcal{A}(k,\ell,t):=\mathcal{A}(k,[[t]],[[\ell]])$ and $\mathcal{B}(\ell,t):=\mathcal{B}(\ell,[[t]],[[\ell]])$.
\end{construction}
\begin{construction}
	Let $T$ be a $(t+1)$-partition. Define
	\begin{align*}
		\mathcal{C}(k,T)&:=\left\{F\in\spn{[n]}{k}:T\subseteq F\right\},\\
		\mathcal{D}(\ell,T)&:=\left\{F\in\spn{[n]}{\ell}:|T\cap F|\geq t\right\}.
	\end{align*}
Set $\mathcal{C}(k,t):=\mathcal{C}(k,[[t+1]])$ and $\mathcal{D}(\ell,t):=\mathcal{D}(\ell,[[t+1]])$.
\end{construction}
Two pairs of families of partitions $(\mathcal{F},\mathcal{G})$ and $(\mathcal{F}',\mathcal{G}')$ are \emph{isomorphic}, denoted $(\mathcal{F},\mathcal{G})\cong(\mathcal{F}',\mathcal{G}')$, if they are the same up to some permutation on $[n]$. 
\begin{theorem}\label{thm2}
Let $k_1\geq k_2\geq t+2$, $(k_1,k_2)\neq(3,3)$ and $n\geq2L(k_1,t)$. If  $\mathcal{F}_1\subseteq\spn{[n]}{k_1}$ and $\mathcal{F}_2\subseteq\spn{[n]}{k_2}$ are  non-trivial cross $t$-intersecting families maximizing $|\mathcal{F}_1||\mathcal{F}_2|$,  then $(\mathcal{F}_1,\mathcal{F}_2)$ is isomorphic to one of $(\mathcal{A}(k_1,k_2,t),\mathcal{B}(k_2,t))$, $(\mathcal{B}(k_1,t),\mathcal{A}(k_2,k_1,t))$, $(\mathcal{C}(k_1,t),\mathcal{D}(k_2,t))$ and $(\mathcal{D}(k_1,t),\mathcal{C}(k_2,t))$.
\end{theorem}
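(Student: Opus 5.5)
We plan to argue by a covering-number analysis in the style of the Hilton–Milner proof of Theorem \ref{thmhm}. For a family $\mathcal{G}$ of partitions, let $\tau_t(\mathcal{G})$ be the least size of a partition $T$ with $|T\cap G|\geq t$ for all $G\in\mathcal{G}$ (a $t$-cover). Since $\mathcal{F}_1,\mathcal{F}_2$ are cross $t$-intersecting, every member of $\mathcal{F}_2$ is a $t$-cover of $\mathcal{F}_1$ and vice versa, so $t\leq\tau_t(\mathcal{F}_i)\leq k_{3-i}$. By maximality of the product, both families are maximal, and non-triviality excludes a common $t$-set of blocks. We first record the sizes of the four candidate pairs by inclusion–exclusion. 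We have $|\mathcal{A}(k_1,k_2,t)|=\sum_{j\ge1}(-1)^{j-1}\binom{k_2-t}{j}\spn{n-t-j}{k_1-t-j}$ and $|\mathcal{B}(k_2,t)|=\spn{n-t}{k_2-t}+t$. Also $|\mathcal{C}(k,t)|=\spn{n-t-1}{k-t-1}$ and $|\mathcal{D}(\ell,t)|=(t+1)\spn{n-t}{\ell-t}-t\spn{n-t-1}{\ell-t-1}$. The estimate $\spn{n-1}{m-1}\le (m/2^{n/m})\spn{n}{m}$-type ratios, which hold because $n\ge 2L(k_1,t)$ (as in \cite{Wen-Lv-2026}), will be used to compare products.

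\textbf{Case 1: $\tau_t(\mathcal{F}_1)=\tau_t(\mathcal{F}_2)=t$.} Let $X_i$ be $t$-covers of size $t$, with all blocks of $X_i$ contained in every member of $\mathcal{F}_i$. Then $X_1\neq X_2$, or else the families are trivial. Suppose $X_1\ne X_2$. Every $F_2\in\mathcal{F}_2$ contains $X_2$ and must meet each $F_1\supseteq X_1$ in $t$ blocks. Taking $F_1$ generic containing $X_1$ (possible since $n$ is large), this forces $X_1\subseteq F_2$, and symmetrically, a contradiction unless a family is tiny. Hence, in a large configuration, one family, say $\mathcal{F}_2$, has $\tau_t=t$ witnessed by $X$, while $\mathcal{F}_1$ has some member missing a block of $X$ (non-triviality). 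So in fact Case 1 reduces to the mixed case below.

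\textbf{Case 2: $\tau_t(\mathcal{F}_2)=t$ with cover $X$, and some $F_1^*\in\mathcal{F}_1$ with $X\not\subseteq F_1^*$.} Every $F_2\in\mathcal{F}_2$ contains $X$ and meets $F_1^*$ in $t$ blocks, so $F_2$ contains a block of $F_1^*\setminus X$. Counting gives $|\mathcal{F}_2|\le\sum_{B}\spn{n-t-1}{k_2-t-1}\cdot O(k_1)$, which is a factor $\Theta(k/\text{ratio})$ smaller than $\spn{n-t}{k_2-t}$. Meanwhile, each $F_1\in\mathcal{F}_1$ without $X$ must $t$-intersect all of $\mathcal{F}_2$. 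Maximality together with the structure $\mathcal{F}_2\subseteq\{F\supseteq X: F\cap(M\setminus X)\neq\emptyset\}$ forces the extremal shape $\mathcal{F}_2=\mathcal{A}(k_2,X,M)$ and $\mathcal{F}_1=\mathcal{B}(k_1,X,M)$ with $|M|=k_1$. Optimizing over $|M|$ shows that $|M|$ must equal the full size $k_1$. This gives the pair $(\mathcal{B}(k_1,t),\mathcal{A}(k_2,k_1,t))$; the symmetric situation gives $(\mathcal{A}(k_1,k_2,t),\mathcal{B}(k_2,t))$.

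\textbf{Case 3: $\tau_t(\mathcal{F}_i)\ge t+1$ for both $i$.} For each $i$ we use the standard branching bound: $|\mathcal{F}_i|\le \binom{k_{3-i}}{t}\cdot(\text{bound for families containing a fixed } t\text{-set of blocks and one more block from a cover})$. We show that at least one family is contained in $\mathcal{C}$-type structure or is $O(k^{2t+2})\spn{n-t-1}{k-t-1}$. If both $\tau$'s are at least $t+2$, both sizes are $O(\spn{n-t-1}{\cdot})$ up to polynomial factors in $k$, and the product loses to the $\mathcal{C}\mathcal{D}$ value. If $\tau_t(\mathcal{F}_2)=t+1$, we analyse the $(t+1)$-covers $T$. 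If $\mathcal{F}_1$ has a unique $(t+1)$-cover $T$ of this type, then $\mathcal{F}_1\subseteq\mathcal{C}(k_1,T)$-plus-small, and maximality forces $\mathcal{F}_1=\mathcal{C}(k_1,T)$ and $\mathcal{F}_2=\mathcal{D}(k_2,T)$. If there are several covers, we bound the size of the families by the number of $(t+1)$-covers times $\spn{n-t-2}{\cdot}$ and show the result is smaller.

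\textbf{Final comparison.} Finally, we compare the products of the four pairs; both potentially survive in general, which is why the theorem lists four pairs rather than choosing one. The main obstacle is the quantitative work: making every non-extremal branch strictly smaller than $\max\{|\mathcal{A}||\mathcal{B}|,|\mathcal{C}||\mathcal{D}|\}$ uniformly for $n\ge 2L(k_1,t)$ and unequal $k_1,k_2$. This requires careful ratio inequalities for Stirling numbers $\spn{n-s}{k-s}/\spn{n-s+1}{k-s+1}$. The exclusion $(k_1,k_2)=(3,3)$ (so $t=1$) will surface there as a small case where an extra structure ties.
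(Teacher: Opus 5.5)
Your case split puts the $\mathcal{C}/\mathcal{D}$ structure in the wrong case, and two of your central claims are false as a result. Every $t$-subset of $T$ lies in every member of $\mathcal{C}(k,T)$, so $\tau_t(\mathcal{C}(k,T))=t$, while $\tau_t(\mathcal{D}(\ell,T))=t+1$. Likewise $\tau_t(\mathcal{A})=t$ and $\tau_t(\mathcal{B})=t+1$. So all four extremal pairs have $\{\tau_t(\mathcal{F}_1),\tau_t(\mathcal{F}_2)\}=\{t,t+1\}$.

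Your Case 3 (both covering numbers $\ge t+1$) therefore cannot end with $\mathcal{F}_1=\mathcal{C}(k_1,T)$; that contradicts its own hypothesis. In the paper this case, like the case $\{t,\ge t+2\}$, contains no extremal configuration at all. It is ruled out directly by the product bound of Lemma \ref{lemmaubprod} (this is Lemma \ref{lemmat+2small}).

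Conversely, your Case 2 claims that maximality forces $\mathcal{F}_1=\mathcal{B}(k_1,X,M)$ and that optimizing gives $|M|=k_1$. This is wrong. First, $\mathcal{B}(k_1,X,M)$ is only defined when $|M|=k_1$. Second, for $|M|=t+1$ one has $\mathcal{A}(k_2,X,M)=\mathcal{C}(k_2,M)$, whose maximal partner is $\mathcal{D}(k_1,M)$. That is exactly the pair $(\mathcal{D}(k_1,t),\mathcal{C}(k_2,t))$. In the symmetric orientation, Lemma \ref{lemmar1r2}(ii) gives $|\mathcal{C}(k_1,t)||\mathcal{D}(k_2,t)|>|\mathcal{A}(k_1,k_2,t)||\mathcal{B}(k_2,t)|$ for $k_2\le 2t+1$, apart from two exceptional pairs. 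So there it is the true maximizer and cannot be optimized away.

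The missing idea is to work with the sets $\mathcal{T}(\mathcal{F})$ and $\mathcal{T}(\mathcal{G})$ of minimum-size covers in the mixed case $\tau_t(\mathcal{F})=t$, $\tau_t(\mathcal{G})=t+1$. The paper proceeds as follows.

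\begin{enumerate}
\item If some $X\in\mathcal{T}(\mathcal{F})$ and $T\in\mathcal{T}(\mathcal{G})$ have $|X\cap T|<t$, the product is small (Lemma \ref{lemmanottintsmall}). Otherwise $X\subseteq T$ for all such pairs.
\item If $|\mathcal{T}(\mathcal{F})|\ge 2$, then $\mathcal{T}(\mathcal{G})=\{\cup\mathcal{T}(\mathcal{F})\}$, and maximality gives $\mathcal{C}/\mathcal{D}$.
\item If $\mathcal{T}(\mathcal{F})=\{X\}$, put $M=\cup\mathcal{T}(\mathcal{G})$ and show $\mathcal{G}\setminus\mathcal{G}_X\subseteq\bigcup_{B\in X}\mathcal{G}_{M\setminus\{B\}}$. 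Hence $M$ is a partition with $|M|\le\ell$. Then $|M|=\ell$ yields $\mathcal{A}/\mathcal{B}$, while $|M|<\ell$ is strictly suboptimal (Lemma \ref{lemmathm2}).
\end{enumerate}

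Your Case 2 instead jumps from ``every member of $\mathcal{F}_2$ contains a block of $F_1^*\setminus X$'' straight to the final shape. It also never separates $\tau_t(\mathcal{F}_1)=t+1$ from $\tau_t(\mathcal{F}_1)\ge t+2$. The latter needs its own argument: when the family with covering number $t$ has uniformity $t+2$, the paper shows it is a single partition and then applies Lemma \ref{lemmaw}.

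Finally, Case 1 should conclude that this case is impossible for maximal non-trivial families (Lemma \ref{lemmatt}), not that it ``reduces to the mixed case''. The ``tiny family'' exception you set aside is exactly where an argument is needed. The paper handles it by averaging $|\cup Y|$ over the $t$-subsets $Y$ of the single member, which gives a contradiction.
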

We treat the special case $(k_1,k_2)=(3,3)$ separately in Proposition \ref{propkl=33}. We mention that each of the four pairs appearing in Theorem \ref{thm2} can be optimal, depending on the specific values of $n,k_1,k_2$ and $t$. However, it might be hard to explicitly describe the ranges of these parameters for which a given pair has maximum product of sizes. Let us  present an alternative version that includes characterizations of optimal families.
\begin{theorem}\label{thmwholargest}
	Let $k_1\geq k_2\geq t+2$, $n\geq2L(k_1,t)$ and let $\mathcal{F}_1\subseteq\spn{[n]}{k_1}$ and $\mathcal{F}_2\subseteq\spn{[n]}{k_2}$ be non-trivial cross $t$-intersecting families. The following hold.
	\begin{itemize}
		\item[\rm(i)]If $k_2\geq2t+2$ and $n\geq t+1+(k_1-t)(k_2-t)$, then $$|\mathcal{F}_1||\mathcal{F}_2|\leq\left(\sum_{j=1}^{k_2-t}(-1)^{j-1}\binom{k_2-t}{j}\spn{n-t-j}{k_1-t-j}\right)\left(\spn{n-t}{k_2-t}+t\right).$$
		Moreover, equality holds if and  only if  $(\mathcal{F}_1,\mathcal{F}_2)\cong(\mathcal{A}(k_1,k_2,t),\mathcal{B}(k_2,t))$, or $k_1=k_2$ and $(\mathcal{F}_1,\mathcal{F}_2)\cong(\mathcal{B}(k_1,t),\mathcal{A}(k_2,k_1,t))$.
		\item[\rm(ii)]If $k_2\leq2t+1$ and $(k_1,k_2)\notin\{(2t+1,2t+1),(4,3)\}$, then 
		$$|\mathcal{F}_1||\mathcal{F}_2|\leq\spn{n-t-1}{k_1-t-1}\left((t+1)\spn{n-t}{k_2-t}-t\spn{n-t-1}{k_2-t-1}\right).$$
		Moreover, equality holds if and  only if  $(\mathcal{F}_1,\mathcal{F}_2)\cong(\mathcal{C}(k_1,t),\mathcal{D}(k_2,t))$, or $k_1=k_2$ and $(\mathcal{F}_1,\mathcal{F}_2)\cong(\mathcal{D}(k_1,t),\mathcal{C}(k_2,t))$.
	\end{itemize}
\end{theorem}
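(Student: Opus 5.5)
Theorem \ref{thmwholargest} follows from Theorem \ref{thm2}, which I take as already established: a maximizing pair is isomorphic to one of the four pairs listed there. The plan therefore has three parts. First compute the four products exactly. Then compare them in the two regimes (i) and (ii). Finally, use the equality characterization of Theorem \ref{thm2} to get the uniqueness statements.

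For the sizes, inclusion--exclusion over the blocks of $[[t+1,k_2]]$ that are required to appear gives
$$|\mathcal{A}(k_1,k_2,t)|=\sum_{j=1}^{k_2-t}(-1)^{j-1}\binom{k_2-t}{j}\spn{n-t-j}{k_1-t-j}.$$
The other sizes are direct:
\begin{itemize}
\item $|\mathcal{B}(k_2,t)|=\spn{n-t}{k_2-t}+t$;
\item $|\mathcal{C}(k_1,t)|=\spn{n-t-1}{k_1-t-1}$;
\item $|\mathcal{D}(k_2,t)|=(t+1)\spn{n-t}{k_2-t}-t\spn{n-t-1}{k_2-t-1}$, counting partitions that contain all of $T$ once rather than $t+1$ times.
\end{itemize}
The swapped pairs $(\mathcal{B}(k_1,t),\mathcal{A}(k_2,k_1,t))$ and $(\mathcal{D}(k_1,t),\mathcal{C}(k_2,t))$ have analogous formulas with the indices exchanged.

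Next come the comparisons, for which I use the growth estimates behind $L(k,t)$. Writing $a=k_1-t$ and $b=k_2-t$, the main terms are:
\begin{itemize}
\item $\mathcal{A}\mathcal{B}$: $\approx b\spn{n-t-1}{a-1}\spn{n-t}{b}$;
\item $\mathcal{C}\mathcal{D}$: $\approx (t+1)\spn{n-t-1}{a-1}\spn{n-t}{b}$;
\item $\mathcal{B}\mathcal{A}$ (swapped): $\approx a\spn{n-t}{a}\spn{n-t-1}{b-1}$;
\item $\mathcal{D}\mathcal{C}$ (swapped): $\approx (t+1)\spn{n-t}{a}\spn{n-t-1}{b-1}$.
\end{itemize}
Since $\spn{m}{c}\approx c^m/c!$, the ratio $\spn{n-t}{a}\spn{n-t-1}{b-1}\big/\spn{n-t-1}{a-1}\spn{n-t}{b}$ is about $(b/a)\,\bigl((b-1)/b\bigr)^{n}(a/(a-1))^{n}\cdots$. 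I expect it to be strictly less than $1$ when $a>b$ and equal to $1$ when $a=b$. This is why the swapped pairs tie exactly when $k_1=k_2$ and lose otherwise; proving it needs a monotonicity inequality for Stirling numbers of the form $\spn{m}{c}\spn{m-1}{c'-1}<\spn{m-1}{c-1}\spn{m}{c'}$ for $c<c'$, which follows from the recurrence $\spn{m}{c}=c\spn{m-1}{c}+\spn{m-1}{c-1}$ and log-concavity-type estimates.

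That leaves comparing $\mathcal{A}\mathcal{B}$ with $\mathcal{C}\mathcal{D}$, i.e.\ $b$ against $t+1$ including the second-order terms. If $k_2\ge 2t+2$, then $b\ge t+2>t+1$, so $\mathcal{A}\mathcal{B}$ wins provided the error terms, namely the alternating tail of $|\mathcal{A}|$ and the term $t\spn{n-t-1}{b-1}$ in $|\mathcal{D}|$, are controlled. The hypothesis $n\ge t+1+(k_1-t)(k_2-t)$ is presumably what makes the first correction $\binom{b}{2}\spn{n-t-2}{a-2}$ small relative to $\spn{n-t-1}{a-1}$. If $k_2\le 2t+1$, then $b\le t+1$ and $\mathcal{C}\mathcal{D}$ wins. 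In the boundary case $b=t+1$ the leading terms coincide, so I must compare exactly $-\binom{b}{2}\spn{n-t-2}{a-2}\spn{n-t}{b}+t\,|\mathcal{A}|$ against $-t\spn{n-t-1}{a-1}\spn{n-t-1}{b-1}$. This is where the excluded cases $(2t+1,2t+1)$ and $(4,3)$ arise: there the comparison either ties or flips.

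The main obstacle is making these second-order comparisons rigorous and uniform in $n\ge 2L(k_1,t)$, especially at $b=t+1$ and $b=t+2$. I would bound the alternating sums using ratio bounds of the form $\spn{m-1}{c-1}\le (c/2^{\,\cdot})\spn{m}{c}$, which are valid in this range of $n$ by the estimates used for Theorem \ref{thm0}. Uniqueness then follows because the only candidates are the four pairs from Theorem \ref{thm2}, and the strict inequalities exclude all but the stated ones.
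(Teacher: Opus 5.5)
Your reduction matches the paper's. By Theorem \ref{thm2}, everything comes down to ordering $r_1(n,k_1,k_2,t)$, $r_1(n,k_2,k_1,t)$, $r_2(n,k_1,k_2,t)$ and $r_2(n,k_2,k_1,t)$. Your size formulas are right, and your handling of $(\mathcal{A},\mathcal{B})$ against $(\mathcal{C},\mathcal{D})$ follows Lemma \ref{lemmar1r2}: leading coefficients $k_2-t$ versus $t+1$, then a second-order comparison at $k_2=2t+1$. The gap is in how you dispose of the swapped pairs.

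For $(\mathcal{D},\mathcal{C})$ your reasoning works, because both products have leading coefficient $t+1$. Indeed $r_2(n,k_1,k_2,t)-r_2(n,k_2,k_1,t)=(t+1)\bigl(\spn{n-t-1}{k_1-t-1}\spn{n-t}{k_2-t}-\spn{n-t}{k_1-t}\spn{n-t-1}{k_2-t-1}\bigr)$ exactly, and log-concavity settles the sign (Lemma \ref{lemmar2kllk}). However, the inequality you quote for this is reversed. The true one is $\spn{m}{c'}\spn{m-1}{c-1}<\spn{m-1}{c'-1}\spn{m}{c}$ for $c<c'$: with $m=4$, $c=2$, $c'=3$ it reads $6<21$, whereas yours asserts $21<6$.

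For $(\mathcal{B},\mathcal{A})$ the leading coefficient is $a=k_1-t$, not $b=k_2-t$. So the ratio of the swapped product to $r_1(n,k_1,k_2,t)$ is about $\frac{a}{b}\bigl(\frac{a(b-1)}{(a-1)b}\bigr)^{n-t-1}$. Your heuristic carries a spurious factor $b/a$, whereas the true prefactor is $a/b>1$. Knowing that the Stirling factor is below $1$ therefore does not show that the swapped pair loses when $k_1>k_2$; you must show it is below $b/a$.

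That is exactly what Lemma \ref{lemmar1kllk} does, and it is where the hypothesis $n\ge t+1+(k_1-t)(k_2-t)$ of (i) is used. The lemma sandwiches $\spn{m}{c}$ between $(1-\frac{1}{2e^2})c^m/c!$ and $c^m/c!$ (Lemma \ref{lemmaestmtspn}). This gives $r_1(n,k_1,k_2,t)/r_1(n,k_2,k_1,t)>\frac{6(k_2-t)}{11(k_1-t)}\exp\bigl(\frac{n-t-1}{(k_2-t)^2}\bigr)$, and $e^x\ge ex$ together with $n-t-1\ge(k_1-t)(k_2-t)$ finishes the comparison. The hypothesis is not there to control the alternating tail of $|\mathcal{A}|$, as you guessed; Lemma \ref{lemmaspn2lkt} (ii) already does that under $n\ge 2L(k_1,t)$.

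The same omission affects (ii). If $k_1>k_2$ and $k_1\ge 2t+1$, the pair $(\mathcal{B}(k_1,t),\mathcal{A}(k_2,k_1,t))$ has leading coefficient $k_1-t\ge t+1$. The fact that $k_2-t\le t+1$ says nothing about this pair, so it too must be beaten by $(\mathcal{C},\mathcal{D})$ through the exponential factor. The paper's cited lemmas reach this comparison only through the chain $r_1(n,k_2,k_1,t)<r_1(n,k_1,k_2,t)<r_2(n,k_1,k_2,t)$, i.e.\ via Lemma \ref{lemmar1kllk}, whose extra hypothesis is not assumed in (ii). So in that regime you need a direct estimate showing that $\frac{k_1-t}{t+1}\bigl(\frac{a(b-1)}{(a-1)b}\bigr)^{n-t-1}$, with lower-order terms controlled, stays below $1$.
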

It seems quite difficult to compare $|\mathcal{A}(k_1,k_2,t)||\mathcal{B}(k_2,t)|$ and $|\mathcal{C}(k_1,t)||\mathcal{D}(k_2,t)|$ in the case $k_1=k_2=2t+1$ for general $n$ and $t$. Nevertheless, one can verify that for $t\geq2$, the former quantity is larger when $n-t-1>2t^2\ln t$, whereas the latter one is larger when $2L(2t+1,t)\leq n<t+1+0.5 t^2\ln t$. The following theorem settles the problem for $r\geq3$, revealing that only one kind of optimal structure remains.
\begin{theorem}\label{thm3}
Suppose $r\geq3$, $k_1\geq k_2\geq\cdots\geq k_r\geq t+2$ and $n\geq5-k_1+2k_1\log_2k_1$. If  $\mathcal{F}_1\subseteq\spn{[n]}{k_1}$, $\mathcal{F}_2\subseteq\spn{[n]}{k_2}$, $\ldots,\;\mathcal{F}_r\subseteq\spn{[n]}{k_r}$ are non-trivial $r$-cross $t$-intersecting, then
$$\prod_{i=1}^r|\mathcal{F}_i|\leq\left((t+1)\spn{n-t}{k_r-t}-t\spn{n-t-1}{k_r-t-1}\right)\prod_{i=1}^{r-1}\spn{n-t-1}{k_i-t-1}.$$
If equality holds, then there is an $a\in[r]$ with $k_a=k_r$ and a partition $T$ consisting of $t+1$ singletons such that $\mathcal{F}_a=\mathcal{D}(k_a,T)$ and   $\mathcal{F}_i=\mathcal{C}(k_i,T)$ for each $i\in[r]\setminus\{a\}$.
\end{theorem}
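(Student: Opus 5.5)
We would prove Theorem~\ref{thm3} by reducing the $r$-cross problem to a $2$-cross one, after a case split on how large the common intersections of subfamilies are. We may assume the families are maximal, so each $\mathcal{F}_i$ is closed under adding any partition that keeps the $r$-cross $t$-intersecting property.

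\textbf{Step 1 (a subfamily without a common $t$-set).} For $S\subseteq[r]$ put $\mathcal{I}_S=\bigcap_{i\in S}(\cap\mathcal{F}_i)$, a set of blocks. Non-triviality gives $|\mathcal{I}_{[r]}|<t$.
\begin{itemize}
\item First suppose some $j$ has $|\mathcal{I}_{[r]\setminus\{j\}}|<t$.
\item Fix $F_j\in\mathcal{F}_j$. The families $\mathcal{F}_i$, $i\neq j$, are then $(r-1)$-cross $t$-intersecting ``relative to $F_j$'': the $t$ common blocks must lie inside $F_j$.
\item Pick any $i_0\neq j$, fix the members of the other $r-3$ families, and use the spread/shadow counting behind Theorem~\ref{thmhm} (the bound $n\ge 5-k_1+2k_1\log_2k_1$ is of the same type as $L$).
\item This gives that $\prod_{i\neq j}|\mathcal{F}_i|$ is at most roughly $\spn{n-t-1}{k-t-1}$-order per factor, times a constant depending on $k$.
\item This should be far below the claimed product, since in that product all but one factor are only one level below $\spn{n-t}{k_i-t}$.
\end{itemize}
A cleaner route: if $|\mathcal{I}_{[r]\setminus\{j\}}|<t$, then for each $i\ne j$ there is a partition in some $\mathcal{F}_{i'}$ avoiding any fixed $t$-set of blocks. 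A standard counting (choose $t$ blocks from a fixed member, at most $\binom{k}{t}$ ways, then use the degree bound on partitions containing $t+1$ prescribed blocks) then shows that at least two of the $\mathcal{F}_i$ have size $O(k^{t+1}\spn{n-t-1}{k-t-1})$. Since $\spn{n-t}{k-t}/\spn{n-t-1}{k-t-1}\ge (k-t)^{\,\cdot}$ grows exponentially in $n/k$, the condition on $n$ makes this product smaller than the target.

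\textbf{Step 2 (all $(r-1)$-subfamilies share $t$ blocks).} Now for every $j$ we have $|\mathcal{I}_{[r]\setminus\{j\}}|\ge t$, while $|\mathcal{I}_{[r]}|<t$.
\begin{itemize}
\item Pick $t$-sets $X_j\subseteq\mathcal{I}_{[r]\setminus\{j\}}$. For $j\ne j'$, every $\mathcal{F}_i$ with $i\notin\{j,j'\}$ (nonempty since $r\ge3$) contains $X_j\cup X_{j'}$ in all its members.
\item If $X_1,\dots,X_r$ are not all equal, the union $\mathcal{I}_{[r]\setminus\{j\}}\cup\mathcal{I}_{[r]\setminus\{j'\}}$ has at least $t+1$ blocks.
\item Because $|\mathcal{I}_{[r]}|<t$, there is some $a$ with $\mathcal{I}_{[r]\setminus\{a\}}\not\subseteq \cap\mathcal{F}_a$.
\item Then every $\mathcal{F}_i$, $i\neq a$, has all members containing a fixed set $T$ of at least $t+1$ blocks. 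If $T$ is not all singletons the count drops further, so the bound $|\mathcal{F}_i|\le \spn{n-t-1}{k_i-t-1}$ holds with equality only when $T$ consists of $t+1$ singletons.
\item By maximality, $\mathcal{F}_a$ is then everything $t$-intersecting with $T$ in the relevant sense, which is contained in $\mathcal{D}(k_a,T)$. Its size is $(t+1)\spn{n-t}{k_a-t}-t\spn{n-t-1}{k_a-t-1}$ by inclusion–exclusion.
\end{itemize}
We still have to rule out $\mathcal{F}_i$ containing more than $t+1$ common blocks in a way that enlarges $\mathcal{F}_a$. Using $k_a\ge k_r$ and the monotonicity of $(t+1)\spn{n-t}{k-t}-t\spn{n-t-1}{k-t-1}$ relative to $\spn{n-t-1}{k-t-1}$ in $k$, the product is maximized by placing the $\mathcal{D}$-factor at the smallest $k$, i.e.\ $k_a=k_r$.

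\textbf{Main obstacle.} The hard part is the inequality in Step~1: showing that configurations where two families avoid a common $t$-set lose enough. It requires the ratio estimate
\[
\frac{\spn{n-t}{k-t}}{\spn{n-t-1}{k-t-1}}\;\gg\; k^{t+1}\text{-type constants},
\]
uniformly for $n\ge 5-k_1+2k_1\log_2k_1$. I would first try to obtain it via the recursion $\spn{m}{s}=s\spn{m-1}{s}+\spn{m-1}{s-1}$, which gives the ratio at least $(s)^{m-s}$-ish growth. I would then check the borderline $k=t+2$ separately.
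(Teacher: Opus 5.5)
Your case split is on the wrong quantity, and it puts the extremal configuration into the case you claim is strictly (``far'') below the bound. In the equality configuration $\mathcal{F}_a=\mathcal{D}(k_a,T)$, $\mathcal{F}_i=\mathcal{C}(k_i,T)$ for $i\neq a$, we have $\cap\mathcal{F}_a=\emptyset$, since for each $B\in T$ some member contains $T\setminus\{B\}$ but not $B$. Hence $\mathcal{I}_{[r]\setminus\{j\}}=\emptyset$ for every $j\neq a$, which is your Step~1. No counting can give a strict inequality there, and yours does not even give the non-strict one. The target already has $r-1$ factors one level down, at $\spn{n-t-1}{k_i-t-1}$. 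So for $r=3$, bounding two families by $O(k^{t+1}\spn{n-t-1}{k-t-1})$ yields a bound a polynomial factor \emph{above} the target, and for $r\ge4$ it leaves $r-2$ factors uncontrolled. The exponential growth of $\spn{n-t}{k-t}/\spn{n-t-1}{k-t-1}$ only absorbs polynomial constants when you gain an extra level relative to the target, and Step~1 never does.

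Conversely, your Step~2 is the easy, strict case, not the extremal one. Suppose $|\mathcal{I}_{[r]\setminus\{j\}}|\ge t$ for all $j$, and fix $i$ and distinct $j,j'\neq i$. Then $\cap\mathcal{F}_i$ contains $\mathcal{I}_{[r]\setminus\{j'\}}$, which lies inside $\cap\mathcal{F}_j$. It also contains a block of $\mathcal{I}_{[r]\setminus\{j\}}$ outside $\cap\mathcal{F}_j$, because $\mathcal{I}_{[r]\setminus\{j\}}\cap(\cap\mathcal{F}_j)=\mathcal{I}_{[r]}$ has fewer than $t$ blocks. So every family, $\mathcal{F}_a$ included, has at least $t+1$ common blocks, and $\prod_i|\mathcal{F}_i|\le\prod_i\spn{n-t-1}{k_i-t-1}$. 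In particular, your conclusion that $\mathcal{F}_a$ consists of everything sharing $t$ blocks with $T$ contradicts the Step~2 hypothesis.

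What is missing is the right dichotomy and the tool for each side. The paper splits on $s_a=\min|\cap_{i\neq a}F_i|$. This asks whether some $r-1$ of the families are $(r-1)$-cross $(t+1)$-intersecting, a condition on tuples of members rather than on fixed common blocks.

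If $s_a\ge t+1$, the paper applies Theorem~\ref{thm1} itself, with intersection parameter $s_a$, to $\{\mathcal{F}_i\}_{i\neq a}$. This is the real reason for $n\ge5-k_1+2k_1\log_2k_1$: it exceeds $L(k_1,s)$ for all relevant $s$, not because of Hilton--Milner-type spread counting. It then bounds $\mathcal{F}_a$ by $|\{F:|F\cap G|\ge t\}|$ for a smallest $G=\cap_{i\neq a}F_i$ via Lemma~\ref{lemmaw}. The $\mathcal{C}/\mathcal{D}$ structure comes from the equality case of Theorem~\ref{thm1}.

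If all $s_i=t$, the paper shows that the families $\mathcal{G}_i=\{\cap_{j\neq i}F_j\}$ have $t$-covering number at least $t+2$ (Lemma~\ref{lemmathm33}). It then uses Lemma~\ref{lemmakey} to push one family down to order $\spn{n-t-2}{k-t-2}$, which is exactly the extra level needed to absorb the polynomial constants. Neither ingredient appears in your proposal.
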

We remark that the lower bound in Theorem \ref{thm3} is chosen for technical reasons, as our proof relies on applying Theorem \ref{thm1} to families satisfying intersection properties with some other parameters. From the proof in Section \ref{sectionrcross},  we see that Theorem \ref{thm3} holds for $n\geq L_0(k_1,t)$, where $L_0(k,t):=\max\{L(k,s):t+1\leq s\leq k-2\}$. One can verify that, for all $t$,  $L_0(k,t)\leq2k\log_2k-(1+o(1))k\log_2\log_2k$ as $k\to\infty$. Meanwhile,  $L_0(k_1,t)\leq2L(k_1,t)$ for all $k_1$ and $t$, and $L_0(k_1,t)\leq L(k_1,t)$ provided $t>k_1/(2\ln k_1)$.

The rest of this paper is organized as follows. In Section \ref{section2}, we first collect several useful relations on Stirling partition numbers, then establish several key lemmas, and finally prove Theorem \ref{thm1}. In Sections \ref{section3} and \ref{sectionrcross}, we utilize $t$-covers associated with the families under consideration to characterize their structure, and prove Theorems \ref{thm2} and \ref{thmwholargest}, and Theorem \ref{thm3}, respectively. In Section \ref{sectionappendix}, we prove a number of technical estimates used throughout this paper.
	\section{Erd\H{o}s-Ko-Rado type theorem}\label{section2}
	Let us collect several relations about Stirling partition numbers. A recurrence relation is 
	\begin{equation}\label{equrecurrence}
		\spn{n}{k}=\spn{n-1}{k-1}+k\spn{n-1}{k}.
	\end{equation}
	In particular, for $n\geq k\geq2$, this gives
	\begin{equation}\label{equnkn-1k}
		\spn{n}{k}>k\spn{n-1}{k}.
	\end{equation}
Let $X$ be a $t$-partition with $|\cup X|<n$, then clearly
	\begin{equation}\label{equntkt}
		\left|\left\{F\in\spn{[n]}{k}:X\subseteq F\right\}\right|=\spn{n-|\cup X|}{k-t}\leq\spn{n-t}{k-t},
	\end{equation}
with equality precisely if $|\cup X|=t$, namely, $X$ consists of singletons.

The following estimates are used frequently in the sequel.
\begin{lemma}[\cite{Lieb-1968}]\label{lemmalogconcavity}
		We have $\spn{n}{k}^2\geq\frac{k}{k-1}\spn{n}{k+1}\spn{n}{k-1}$ for $n\geq k\geq2$.
	\end{lemma}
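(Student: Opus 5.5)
The plan is to prove a stronger statement, real-rootedness of the Stirling polynomials, and then read off the inequality from Newton's inequalities. For $n\geq1$ set
$$P_n(x):=\sum_{k=0}^{n}\spn{n}{k}x^k .$$
Multiplying the recurrence \eqref{equrecurrence} by $x^k$ and summing over $k$ gives
$$P_{n+1}(x)=x\bigl(P_n(x)+P_n'(x)\bigr)=x\,e^{-x}\frac{d}{dx}\bigl(e^{x}P_n(x)\bigr).$$

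First, I will show by induction that $P_n$ has $n$ distinct real roots, namely $0$ and $n-1$ negative numbers. The base case is $P_1(x)=x$. For the inductive step, suppose $P_n$ has distinct roots $x_1<\cdots<x_{n-1}<x_n=0$. Then $g(x)=e^xP_n(x)$ vanishes at these $n$ points. By Rolle's theorem, $g'$ has a root strictly between each pair of consecutive roots, which gives $n-1$ roots.

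One more root of $g'$ lies in $(-\infty,x_1)$. Indeed, $g(x)\to0$ as $x\to-\infty$ and $g(x_1)=0$, while $g$ is not identically zero on that interval, so $g$ attains an interior extremum there. For $n=1$ the same argument applies on $(-\infty,0)$.

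So $g'$, which equals $e^x$ times a polynomial of degree $n$, has $n$ distinct negative roots. Multiplying by $x$ shows that $P_{n+1}$ has $n+1$ distinct real roots, with $0$ a simple root. This completes the induction.

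Second, I will remove the trivial root. Since $\spn{n}{0}=0$, we have $Q_n(x):=P_n(x)/x=\sum_{j=0}^{n-1}b_jx^j$ with $b_j=\spn{n}{j+1}$. This is a real-rooted polynomial of degree $N=n-1$ with positive coefficients.

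Newton's inequalities for real-rooted polynomials state that for $1\leq j\leq N-1$,
$$b_j^2\;\geq\;b_{j-1}b_{j+1}\,\frac{(j+1)(N-j+1)}{j(N-j)}\;\geq\;\frac{j+1}{j}\,b_{j-1}b_{j+1},$$
where the last step uses $N-j+1>N-j>0$. Taking $j=k-1$ turns this into $\spn{n}{k}^2\geq\frac{k}{k-1}\spn{n}{k+1}\spn{n}{k-1}$ for $2\leq k\leq n-1$.

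The boundary case $k=n$ is trivial, since $\spn{n}{n+1}=0$ and the right-hand side vanishes. This covers the full range $n\geq k\geq2$.

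The main obstacle is the real-rootedness step, in particular locating the extra root of $g'$ to the left of $x_1$. The exponential factor $e^x$ is what makes the limit at $-\infty$ vanish, and this is why I multiply by it. Once real-rootedness is in hand, the rest is the standard Newton inequality, which I will quote; if a self-contained argument is preferred, it can be derived by repeated differentiation and reversal down to a real-rooted quadratic.
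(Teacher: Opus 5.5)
Your proof is correct. The paper gives no proof of this lemma and only cites Lieb, so there is no internal argument to compare against. What you give is the classical route to the inequality: real-rootedness of $\sum_k\spn{n}{k}x^k$ (a fact going back to Harper), obtained from $P_{n+1}=x\,e^{-x}\bigl(e^{x}P_n\bigr)'$, followed by Newton's inequalities. Each step checks out, including the extended Rolle argument on $(-\infty,x_1)$ and the boundary case $k=n$.

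Dividing out the root at $0$ is genuinely necessary. Newton applied to $P_n$ itself only gives the factor $\frac{(k+1)(n-k+1)}{k(n-k)}$, and this drops below $\frac{k}{k-1}$ once $n>k^2+k-1$. With that step in place, your argument actually yields the sharper bound $\spn{n}{k}^2\geq\frac{k}{k-1}\cdot\frac{n-k+1}{n-k}\spn{n}{k+1}\spn{n}{k-1}$ for $2\leq k\leq n-1$, of which the stated lemma is a weakening.
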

	\begin{lemma}[\cite{Kupavskii-2023}]\label{lemmaspnntkt}
		For $m\geq r\geq 2$, we have 
	\begin{itemize}
	\item[\rm(i)]$\spn{m-1}{r}\geq\frac{1}{r}\left(2^{\frac{m-1}{r-1}}-2\right)\spn{m-1}{r-1}$, and 
	\item[\rm(ii)]$\spn{m}{r}\geq\left(2^{\frac{m-1}{r-1}}-1\right)\spn{m-1}{r-1}$.
	\end{itemize}	
	\end{lemma}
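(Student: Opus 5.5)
The plan is to prove (i) by double counting and then obtain (ii) from (i) via the recurrence \eqref{equrecurrence}. Throughout, write $N:=m-1$, so that (i) reads
$$\spn{N}{r}\geq\frac{1}{r}\left(2^{\frac{N}{r-1}}-2\right)\spn{N}{r-1}.$$
If $N<r-1$, then $\spn{N}{r-1}=0$ and there is nothing to prove. So assume $N\geq r-1$.

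\textbf{Step 1 (the incidence count).} I will count pairs $(P,Q)$ with $P\in\spn{[N]}{r-1}$ and $Q\in\spn{[N]}{r}$ such that $Q$ is obtained from $P$ by splitting one block of $P$ into two nonempty parts.

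From the side of $Q$: each $Q$ arises from exactly $\binom{r}{2}$ partitions $P$, namely by merging any two of its $r$ blocks. So the number of pairs is $\binom{r}{2}\spn{N}{r}$.

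From the side of $P$: a block $B\in P$ can be split into an unordered pair of nonempty parts in $2^{|B|-1}-1$ ways. So $P$ contributes
$$\sum_{B\in P}\left(2^{|B|-1}-1\right)=\sum_{B\in P}2^{|B|-1}-(r-1)$$
pairs.

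\textbf{Step 2 (convexity).} The block sizes of $P$ are $r-1$ positive integers summing to $N$. Since $x\mapsto2^{x-1}$ is convex, Jensen's inequality gives
$$\sum_{B\in P}2^{|B|-1}\geq(r-1)\,2^{\frac{N}{r-1}-1}.$$
Summing over all $P$ and comparing with the count from Step 1 yields
$$\binom{r}{2}\spn{N}{r}\geq(r-1)\left(2^{\frac{N}{r-1}-1}-1\right)\spn{N}{r-1}.$$
Dividing both sides by $\binom{r}{2}=r(r-1)/2$ gives exactly (i).

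\textbf{Step 3 (part (ii)).} By \eqref{equrecurrence} and then (i),
$$\spn{m}{r}=\spn{m-1}{r-1}+r\spn{m-1}{r}\geq\spn{m-1}{r-1}+\left(2^{\frac{m-1}{r-1}}-2\right)\spn{m-1}{r-1}=\left(2^{\frac{m-1}{r-1}}-1\right)\spn{m-1}{r-1}.$$

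The only step needing real care is Step 2. One must check that Jensen's inequality is applied to the correct convex function and that the constant terms match. The subtracted $r-1$ from the splitting counts, after dividing by $\binom{r}{2}$, becomes precisely the $-2$ inside the bracket of (i). No integrality issue arises, since Jensen's inequality holds for real averages.
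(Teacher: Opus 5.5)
Your proof is correct and complete. Double-counting refinement pairs gives the exact identity $\binom{r}{2}\spn{m-1}{r}=\sum_{P}\sum_{B\in P}\bigl(2^{|B|-1}-1\bigr)$, summed over $P$ in $\spn{[m-1]}{r-1}$. Jensen's inequality for $x\mapsto 2^{x-1}$ bounds the inner sum below by $(r-1)\bigl(2^{\frac{m-1}{r-1}-1}-1\bigr)$, which yields (i), and (ii) then follows from (i) together with the recurrence \eqref{equrecurrence}.

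The paper gives no proof of this lemma; it simply imports it from Kupavskii's work, so there is no in-paper argument to compare against. The constants in (i), the $\frac{1}{r}$ coming from dividing by $\binom{r}{2}$ and the $-2$ coming from the $-(r-1)$ term, are consistent with this refinement double count being the argument behind the cited result.
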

\begin{lemma}\label{lemmaspn2lkt}Let $j\geq k\geq t+2$ and $0\leq s\leq k-t-2$. The following hold.
	\begin{itemize}
		\item[\rm(i)]$\spn{n-t-s}{k-t-s}>(t+1)(j-t+1)\spn{n-t-s-1}{k-t-s-1}$ for $n\geq L(j,t)-1$.	
		\item[\rm(ii)]$\spn{n-t-s}{k-t-s}>(t+1)^2(j-t+1)^2\spn{n-t-s-1}{k-t-s-1}$ for $n\geq2L(j,t)-t-2$.
	\end{itemize}
\end{lemma}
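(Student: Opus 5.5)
The plan is to reduce both parts to Lemma \ref{lemmaspnntkt}(ii) with $m=n-t-s$ and $r=k-t-s$. The constraints give $r\geq2$ (from $s\leq k-t-2$), and $m\geq r$ because $n\geq k$, which holds under either lower bound on $n$. The lemma then yields
$$\spn{n-t-s}{k-t-s}\geq\left(2^{x_s}-1\right)\spn{n-t-s-1}{k-t-s-1},\qquad x_s:=\frac{n-t-s-1}{k-t-s-1}.$$
Write $P:=(t+1)(j-t+1)$. It then suffices to show $2^{x_s}-1>P$ in (i) and $2^{x_s}-1>P^2$ in (ii), that is, $x_s>\log_2(P+1)$, respectively $x_s>\log_2(P^2+1)$.

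First I remove the dependence on $s$ and $k$. Since $n\geq k$, the numerator of $x_s$ is at least its denominator. Subtracting the same amount $s$ from both parts of such a fraction does not decrease it, so $x_s\geq x_0$. Moreover, $k\leq j$ gives $x_0\geq \frac{n-t-1}{j-t-1}$, where $j-t-1\geq1$.

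For (i), the hypothesis $n\geq L(j,t)-1$ gives $n-t-1\geq (j-t+1)\log_2P-1$. It is therefore enough to check
$$(j-t+1)\log_2P-1-(j-t-1)\log_2(P+1)>0.$$
Rewrite the left side as $2\log_2P-1-(j-t-1)\log_2(1+1/P)$. Using $\log_2(1+1/P)\leq \frac{1}{P\ln2}$ and $\frac{j-t-1}{P}<\frac{1}{t+1}\leq\frac12$, it is at least $2\log_2P-1-\frac{1}{2\ln2}$. This is positive because $P\geq 2\cdot3=6$.

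For (ii), the hypothesis $n\geq 2L(j,t)-t-2$ gives $n-t-1\geq 2(j-t+1)\log_2P-1$. The analogous quantity is
$$4\log_2P-1-(j-t-1)\log_2(1+P^{-2}),$$
which is clearly positive by the same estimate.

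The main point is the monotonicity step that reduces to $s=0$ and $k=j$. After that, only the elementary logarithmic inequalities above remain. They are not tight, so I expect no real obstacle.
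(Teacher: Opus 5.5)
Your proposal is correct and follows essentially the paper's route. You apply Lemma \ref{lemmaspnntkt}(ii) with $m=n-t-s$ and $r=k-t-s$, use $\frac{n-t-s-1}{k-t-s-1}\geq\frac{n-t-1}{j-t-1}$, and check that the resulting power of $2$ exceeds $P+1$ (respectively $P^2+1$). The only difference is that the paper does this final check multiplicatively for both parts at once (exponent $r\in\{1,2\}$, with $\theta=1/(j-t-1)$), whereas you take logarithms and treat the two parts separately, which is cosmetic.
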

\begin{proof} 
Let us write $a=(t+1)(j-t+1)$ and $\theta=1/(j-t-1)$ for short. For $r\geq1$ and $n\geq rL(j,t)-(r-1)(t+1)-1$, we have
\begin{align*}
	2^{\frac{n-t-s-1}{k-t-s-1}}&\geq2^{\frac{rL(j,t)-r(t+1)-1}{k-t-1}}\geq((t+1)(j-t+1))^{\frac{r(j-t+1)}{j-t-1}}/2^{\theta}\\
	&=a^{r(1+2\theta)}/2^{\theta}>a^{r+\theta}=a^r\cdot e^{\theta\ln a}>a^{r}(1+\theta\ln a)>a^r+1.
\end{align*}
It follows from Lemma \ref{lemmaspnntkt} (ii) that $\spn{n-t-s}{k-t-s}>a^r\spn{n-t-s-1}{k-t-s-1}$, and thus (i) and (ii) follow by replacing $r$ with $1$ and $2$, respectively.
\end{proof}	
Our key tool is the notion of a $t$-cover. Let $\mathcal{P}$ be a family of partitions. A \emph{$t$-cover} of $\mathcal{P}$ is a partition that shares at least $t$ blocks with every member of $\mathcal{P}$, and if such a partition exists, then the \emph{$t$-covering number} $\tau_t(\mathcal{P})$ of $\mathcal{P}$ is defined to be the minimum size of a $t$-cover. We note that a $t$-cover is not required to be a partition of the universe.

Given a pair of cross $t$-intersecting families  $\mathcal{F}\subseteq\spn{[n]}{k}$ and $\mathcal{G}\subseteq\spn{[n]}{\ell}$,  every member of $\mathcal{F}$ is a $t$-cover of $\mathcal{G}$, and vice versa. In particular, this gives  $t\leq\tau_t(\mathcal{F})\leq\ell$ and $t\leq\tau_t(\mathcal{G})\leq k$. The following lemma serves as a criterion for the triviality of cross $t$-intersecting families.
\begin{lemma}\label{lemmatt}
	Let $k,\ell\geq t$ and $n>\max\{k,\ell\}$, and let $\mathcal{F}\subseteq\spn{[n]}{k}$ and $\mathcal{G}\subseteq\spn{[n]}{\ell}$ be maximal cross $t$-intersecting families. Then $\mathcal{F}$ and $\mathcal{G}$ are trivial if and only if $\tau_t(\mathcal{F})=\tau_t(\mathcal{G})=t.$
\end{lemma}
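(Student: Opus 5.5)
We prove the two directions separately; the forward direction is immediate and the content lies in the converse, where maximality is used.

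Suppose first that $\mathcal{F}$ and $\mathcal{G}$ are trivial. Then there is a $t$-set $X\subseteq\cap(\cap\mathcal{F})\cap(\cap\mathcal{G})$, i.e.\ a $t$-partition $X$ contained in every member of both families. Hence $X$ is a $t$-cover of each family, so $\tau_t(\mathcal{F})\le t$ and $\tau_t(\mathcal{G})\le t$. Any $t$-cover must share at least $t$ blocks with a member, so it has at least $t$ blocks. Therefore $\tau_t(\mathcal{F})=\tau_t(\mathcal{G})=t$. Maximality is not needed here.

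Conversely, suppose $\tau_t(\mathcal{F})=\tau_t(\mathcal{G})=t$. Let $X$ be a $t$-cover of $\mathcal{F}$ with $|X|=t$ and $Y$ a $t$-cover of $\mathcal{G}$ with $|Y|=t$. Since $X$ has exactly $t$ blocks and shares at least $t$ blocks with every $F\in\mathcal{F}$, we get $X\subseteq F$ for all $F\in\mathcal{F}$; likewise $Y\subseteq G$ for all $G\in\mathcal{G}$. The plan is to use maximality to show $X=Y$.

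\textbf{Step 1: maximality pins down $\mathcal{G}$.} Every $G\in\spn{[n]}{\ell}$ with $X\subseteq G$ satisfies $|F\cap G|\ge |X|=t$ for all $F\in\mathcal{F}$. So $\mathcal{G}\cup\{G\}$ and $\mathcal{F}$ are still cross $t$-intersecting, and maximality gives $G\in\mathcal{G}$. Hence $\{G\in\spn{[n]}{\ell}:X\subseteq G\}\subseteq\mathcal{G}$. Symmetrically, $\{F\in\spn{[n]}{k}:Y\subseteq F\}\subseteq\mathcal{F}$.

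\textbf{Step 2: show $X=Y$.} The obstacle is to produce a partition containing $X$ that avoids some block of $Y$, and this is where $n>\max\{k,\ell\}$ is used. Suppose $B\in Y\setminus X$. Note that $\mathcal{G}$ is nonempty: the set $\{G:X\subseteq G\}$ is nonempty because $\cup X$ misses at most $n-t\ge \ell-t$ points, and if $|\cup X|=n$ then $t\le\ell$ forces the count to be handled separately. We construct an $\ell$-partition $G\supseteq X$ with $B\notin G$ as follows.
\begin{itemize}
\item If $B$ meets $\cup X$, then $B$ is not disjoint from the blocks of $X$, so $B$ cannot be a block of any partition containing $X$.
\item Otherwise $B\subseteq R:=[n]\setminus\cup X$, and we partition $R$ into $\ell-t$ blocks, none equal to $B$. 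This is possible when $|R|>\ell-t$, since we may choose blocks so that $B$ is split or merged with other elements. If $|R|=\ell-t$, the partition of $R$ is forced to be all singletons.
\end{itemize}

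\textbf{Step 3: the tight case.} The forced case $|R|=\ell-t$ means $|\cup X|=n-\ell+t>t$. Then some block $A$ of $X$ is not a singleton, and we argue symmetrically with $\mathcal{F}$. Using $n>k$, take $F\supseteq Y$ with $A\notin F$; such $F$ lies in $\mathcal{F}$ by Step 1, contradicting $X\subseteq F$. So either $X=Y$, or we obtain some $G\in\mathcal{G}$ with $Y\not\subseteq G$ (or the symmetric statement for $\mathcal{F}$), contradicting that $Y$ lies in every member. Hence $X=Y$.

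With $X=Y$, the $t$-partition $X$ lies in every member of both families, so $|\cap(\cap\mathcal{F})\cap(\cap\mathcal{G})|\ge t$ and the families are trivial. The careful case analysis in Steps 2 and 3, ensuring the required partitions exist when $|R|$ is small, is the main technical point.
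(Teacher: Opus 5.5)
There is a genuine gap: you set out to show $X=Y$ for \emph{arbitrary} minimum $t$-covers $X$ of $\mathcal{F}$ and $Y$ of $\mathcal{G}$, and that intermediate claim is false even though the lemma is true.

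\textbf{A counterexample to $X=Y$.} Take $t=1$, $k=2$, $\ell=3$, $n=4$, $\mathcal{F}=\{F_0\}$ with $F_0=\{\{1,2,3\},\{4\}\}$, and $\mathcal{G}=\{G\in\spn{[4]}{3}:\{4\}\in G\}$. The pair is maximal: a $2$-partition sharing a block with all three members of $\mathcal{G}$ must contain $\{4\}$, and a $3$-partition of $[4]$ sharing a block with $F_0$ must contain $\{4\}$. It is trivial and has $\tau_1(\mathcal{F})=\tau_1(\mathcal{G})=1$. Yet $X=\{\{1,2,3\}\}$ and $Y=\{\{4\}\}$ are admissible choices with $X\neq Y$. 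A simpler instance is $k=\ell=t+1$, $\mathcal{F}=\mathcal{G}=\{F_0\}$, with $X,Y$ two distinct $t$-subsets of $F_0$.

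\textbf{Where the argument fails.} Write $R=[n]\setminus\cup X$.
\begin{itemize}
\item You assert that some $\ell$-partition contains $X$. But $|R|\le n-t$ is an upper bound and proves nothing. From $\mathcal{F}\neq\emptyset$ you only get $|R|\ge k-t$, and when $k<\ell$ you can have $|R|<\ell-t$ (in the example $|R|=1<2$). Steps 2 and 3 never treat this case.
\item Step 2 breaks for $\ell=t+1$ and $B=R$, since the only $1$-partition of $R$ is $\{R\}$.
\item In Step 3 the block $A$ must lie in $X\setminus Y$; being a non-singleton is irrelevant. A $k$-partition $F\supseteq Y$ with $A\notin F$ need not exist. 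It does not when $k=t+1$ and $A=[n]\setminus\cup Y$, as in both examples.
\end{itemize}

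\textbf{What is missing.} The lemma has to be argued by contradiction from non-triviality, which makes $X\neq Y$ for every choice. Its real content is a case your proposal has no tool for.
\begin{itemize}
\item Non-triviality forces one family, say $\mathcal{F}$, to have at least two members. Hence $k\ge t+2$ and $|R|>k-t$.
\item Maximality then gives $|R|\le\ell-t$. Your Step 2 construction yields this when $\ell\ge t+2$; the paper gets it by counting with the recurrence. So $\ell>k$, and by symmetry $\mathcal{G}=\{G\}$.
\item Now no $\ell$-partition contains $X$: by maximality it would equal $G$, making the pair trivial. So your Step 2 has nothing to work with.
\end{itemize}
The missing idea is the paper's averaging step. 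Some $t$ blocks $Z\subseteq G$ satisfy $|\cup Z|\le\lfloor tn/\ell\rfloor$. By maximality every $k$-partition containing $Z$ lies in $\mathcal{F}$. This gives $|\mathcal{F}|\ge\spn{\lceil n(\ell-t)/\ell\rceil}{k-t}>\spn{\ell-t}{k-t}\ge|\mathcal{F}|$, a contradiction; this is where $n>\ell$ is genuinely used.

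Alternatively, once such a $Z$ with $|[n]\setminus\cup Z|>k-t$ is in hand, your avoidance construction finishes the job. Build $F\supseteq Z$ omitting a block of $X\setminus Z$. This $F$ lies in $\mathcal{F}$ by maximality but does not contain $X$.
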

\begin{proof}
	The lemma holds trivially for $\max\{k,\ell\}\leq t+1$. Suppose $\max\{k,\ell\}\geq t+2$. If $\mathcal{F}$ and $\mathcal{G}$ are trivial, then some $t$-partition is contained in every partition in $\mathcal{F}\cup\mathcal{G}$, and so each of the two families has $t$-covering number $t$.
	
 For the sufficiency, suppose  $\tau_t(\mathcal{F})=\tau_t(\mathcal{G})=t$, and let $X$ and $Y$ be $t$-covers of $\mathcal{F}$ and $\mathcal{G}$ with $t$ blocks, respectively. Assume to the contrary that $\mathcal{F}$ and $\mathcal{G}$ are non-trivial. Then $X$ and $Y$ are distinct, and at least one of the two families, say $\mathcal{F}$, has size larger than one.  Note that  $2\leq|\mathcal{F}|\leq\spn{n-|\cup X|}{k-t}$ yields $k\geq t+2$ and $n-|\cup X|>k-t$. We claim that $n-|\cup X|\leq\ell-t$. Indeed, assume that $n-|\cup X|\geq\ell-t+1$. Note that now the number of $\ell$-partitions of $[n]$ containing $X$ is $\spn{n-|\cup X|}{\ell-t}\geq1$. By the maximality, each of them lies in $\mathcal{G}$, and then contains $Y$ as $Y\subseteq\cap\mathcal{G}$. In particular, we obtain that $X\cup Y$ is a partition and $\spn{n-|\cup X|}{\ell-t}=\spn{n-|\cup(X\cup Y)|}{\ell-|X\cup Y|}$. From $|X\cup Y|\geq t+1$, the right hand side does not exceed $\spn{n-|\cup X|-1}{\ell-t-1}$. Then $\spn{n-|\cup X|}{\ell-t}\leq\spn{n-|\cup X|-1}{\ell-t-1}$, which is impossible from (\ref{equrecurrence}). Hence our claim holds, and then we deduce that  $\ell>k$. By symmetry, if $|\mathcal{G}|\geq2$, then $k>\ell$, which contradicts $\ell>k$. Therefore, $\mathcal{G}$ has exactly one member. Set $\mathcal{G}=\{G\}$. The average of $|\cup T|$ over $t$-subsets $T$ of $G$ equals $\binom{\ell-1}{t-1}n/\binom{\ell}{t}$, and then there are $t$ blocks of $G$ whose union has size at most  $\left\lfloor\binom{\ell-1}{t-1}n/\binom{\ell}{t}\right\rfloor=\left\lfloor tn/\ell\right\rfloor$. By the maximality again, every partition in $\spn{[n]}{k}$ containing these $t$ blocks belongs to $\mathcal{F}$. Hence $|\mathcal{F}|\geq\spn{n-\left\lfloor tn/\ell\right\rfloor}{k-t}=\spn{\left\lceil n(\ell-t)/\ell\right\rceil}{k-t}>\spn{\ell-t}{k-t}$ due to the fact that $k\geq t+2$. However, note that $|\mathcal{F}|\leq\spn{\ell-t}{k-t}$ since $X\subseteq\cap\mathcal{F}$ and $n-|\cup X|\leq\ell-t$. This contradiction proves that $\mathcal{F}$ and $\mathcal{G}$ are trivial provided $\tau_t(\mathcal{F})=\tau_t(\mathcal{G})=t.$
\end{proof}
The next two lemmas play an essential role throughout the paper in deriving bounds for families with certain intersection properties. Let $U$ be a set and let $\mathcal{F}\subset2^{U}$. For $H\subseteq U$, set
\begin{equation*}
	\mathcal{F}_H:=\{F\in\mathcal{F}:H\subseteq F\}.
\end{equation*}
	\begin{lemma}[\cite{Wen-Lv-2026}]\label{lemmainductive}Suppose $\mathcal{F}\subseteq\spn{[n]}{k}$ and $T$ is a $t$-cover of $\mathcal{F}$ with $\ell$ blocks. If $S$ is an $s$-partition with $|S\cap T|=r<t$, and $\mathcal{F}_S\neq\emptyset$, then there is a $(t+s-r)$-partition $H$ containing $S$ such that $|\mathcal{F}_S|\leq\binom{\ell-r}{t-r}|\mathcal{F}_H|$. In particular,  $|\mathcal{F}_S|\leq\binom{\ell-r}{t-r}\spn{n-s-t+r}{k-s-t+r}$.
	\end{lemma}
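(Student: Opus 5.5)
The argument is a branching one: repeatedly add a block of the cover $T$ to $S$ until $S$ shares $t$ blocks with $T$, and count by pigeonhole along the way. I will induct on $t-r\ge 1$.

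Every $F\in\mathcal{F}_S$ contains $S$ and shares at least $t$ blocks with $T$. Since $|S\cap T|=r<t$, such an $F$ must contain at least $t-r$ blocks of $T\setminus S$. In particular, for each $F\in\mathcal{F}_S$ some $(t-r)$-subset $Y\subseteq T\setminus S$ satisfies $Y\subseteq F$. There are at most $\binom{\ell-r}{t-r}$ such $Y$. Hence
\[
\mathcal{F}_S=\bigcup_{Y}\mathcal{F}_{S\cup Y},
\]
where $Y$ ranges over those $(t-r)$-subsets of $T\setminus S$ for which $S\cup Y$ is a partition (blocks pairwise disjoint); the other choices contribute nothing. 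By pigeonhole, some $Y$ gives
\[
|\mathcal{F}_S|\le\binom{\ell-r}{t-r}|\mathcal{F}_{S\cup Y}|.
\]
Set $H=S\cup Y$. Since $Y\cap S=\emptyset$ as sets of blocks, $H$ is a $(s+t-r)$-partition containing $S$. Because $\mathcal{F}_S\neq\emptyset$, at least one $\mathcal{F}_{S\cup Y}$ is nonempty, so the maximizing $Y$ can be taken with $S\cup Y$ a genuine partition. This one-step argument already proves the lemma directly, so no induction is actually needed.

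For the "in particular" bound, apply (\ref{equntkt}) to $H$. It gives $|\mathcal{F}_H|\le\spn{n-|\cup H|}{k-(s+t-r)}\le\spn{n-s-t+r}{k-s-t+r}$, since $|\cup H|\ge |H|=s+t-r$. The degenerate case $|\cup H|=n$ yields at most one partition, which is also covered because the Stirling number is then at least $1$ whenever $\mathcal{F}_H\neq\emptyset$.

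The only delicate point is that the union over $Y$ need only include those $Y$ compatible with $S$. The count $\binom{\ell-r}{t-r}$ is an upper bound on the number of candidates, so nothing is lost.
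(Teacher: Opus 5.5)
Your proof of the main inequality is correct. Since $S\subseteq F$, we have $F\cap T\cap S=S\cap T$, so every $F\in\mathcal{F}_S$ contains at least $t-r$ blocks of $T\setminus S$. Hence $\mathcal{F}_S$ is the union of the at most $\binom{\ell-r}{t-r}$ families $\mathcal{F}_{S\cup Y}$, and a largest one is nonempty, which forces $S\cup Y$ to be a genuine $(s+t-r)$-partition. The paper gives no proof of this lemma and only cites the authors' earlier work, but this one-step pigeonhole argument is the natural one and is sound. As you noticed, the announced induction is unnecessary.

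The error is in your last step, the degenerate case of the ``in particular'' bound. If $|\cup H|=n$ and $\mathcal{F}_H\neq\emptyset$, then $H$ itself is a member of $\mathcal{F}$. So $s+t-r=k$, and the right-hand side is $\binom{\ell-r}{t-r}\spn{n-k}{0}$. For $n>k$ this is $0$, and the paper even sets $\spn{n}{a}=0$ for all $a\le 0$. Your claim that ``the Stirling number is then at least $1$'' is therefore false.

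In fact the stated bound genuinely fails in this case. Take $t=1$, $k=3$, $n=5$, $\mathcal{F}=\{F\}$ with $F=\{\{1\},\{2\},\{3,4,5\}\}$, $T=\{\{1\},\{2,3,4,5\}\}$ (so $\ell=2$), and $S=\{\{2\},\{3,4,5\}\}$ (so $s=2$, $r=0$). Then $|\mathcal{F}_S|=1$, while $\binom{2}{1}\spn{2}{0}=0$.

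So the second assertion can only hold under the extra condition $s+t-r\le k-1$, equivalently $|\cup H|<n$. In that range your use of (\ref{equntkt}) is fine, together with the monotonicity of $\spn{m}{a}$ in $m$ for $a\ge 1$, which follows immediately from (\ref{equrecurrence}). When $s+t-r=k$, the correct conclusion is only $|\mathcal{F}_S|\le\binom{\ell-r}{t-r}$. This is exactly why the proof of Lemma \ref{lemmakey} treats $|H_j|=k$ separately, using $|\mathcal{F}_{H_j}|=1$ instead of the Stirling bound. You should have flagged this edge case as an exception to the statement rather than tried to prove it.
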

	\begin{lemma}\label{lemmakey}
		Suppose $k\geq t+2$, $\ell\geq t$ and  $n\geq L(\max\{k,\ell\},t)$. Let  $\mathcal{F}\subseteq\spn{[n]}{k}$ and let $\mathcal{G}$ be a collection of $t$-covers of $\mathcal{F}$, where each has at most $\ell$ blocks. Then for every $t$-partition $H$, we have 
		$$|\mathcal{F}_H|\leq\max\left\{(\ell-t+1)^{\tau_t(\mathcal{G})-t}\spn{n-\tau_t(\mathcal{G})}{k-\tau_t(\mathcal{G})},\;(\ell-t+1)^{k-t}\right\}.$$
	\end{lemma}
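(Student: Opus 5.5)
The plan is an iterative branching argument of the kind used for $t$-covers of set families. We build a chain of partitions starting from $H$, and at each step we enlarge the current partition by one block taken from a member of $\mathcal{G}$. Set $\tau:=\tau_t(\mathcal{G})$; recall that each member of $\mathcal{G}$ is a $t$-cover of $\mathcal{F}$ with at most $\ell$ blocks.

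\emph{The step.} Suppose $S\supseteq H$ is a partition with $|S|=s$ and $\mathcal{F}_S\neq\emptyset$. If $s<\tau$, then $S$ is not a $t$-cover of $\mathcal{G}$. Hence some $G\in\mathcal{G}$ satisfies $|S\cap G|\le t-1$. Every $F\in\mathcal{F}_S$ meets $G$ in at least $t$ blocks, so $F$ contains a block of $G\setminus S$ that is compatible with $S$. Since $|G\setminus S|\le \ell-|S\cap G|$, we would like this branching factor to be at most $\ell-t+1$. That holds only if $|S\cap G|=t-1$, so this is where the argument must be sharpened.

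The remedy is to use Lemma \ref{lemmainductive} directly on $G$. Write $r=|S\cap G|<t$. Lemma \ref{lemmainductive} gives a $(t+s-r)$-partition $H'\supseteq S$ with $|\mathcal{F}_S|\le\binom{\ell-r}{t-r}|\mathcal{F}_{H'}|$. Passing from $S$ to $H'$ adds $t-r$ blocks at cost $\binom{\ell-r}{t-r}$. We need $\binom{\ell-r}{t-r}\le(\ell-t+1)^{t-r}$, which holds termwise because
\[
\frac{\ell-r-i}{t-r-i}\le \ell-t+1 \quad\text{for } 0\le i<t-r .
\]
So each added block costs at most a factor $\ell-t+1$.

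\emph{Iteration.} Start with $S=H$, so $s=t$. Repeat the step while $|S|<\tau$ and $\mathcal{F}_S\neq\emptyset$. The process ends with a partition $S^*\supseteq H$ of size $s^*$, and either $s^*\ge\tau$ or $s^*$ reaches $k$ (beyond $k$ we have $\mathcal{F}_{S^*}=\emptyset$). This gives
\[
|\mathcal{F}_H|\le(\ell-t+1)^{s^*-t}\,|\mathcal{F}_{S^*}|\le(\ell-t+1)^{s^*-t}\spn{n-s^*}{k-s^*},
\]
with $\tau\le s^*\le k$ in the first case. A single step may overshoot $\tau$ by several blocks, so $s^*$ can lie anywhere in $[\tau,k]$.

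\emph{Maximising over $s^*$.} Define $f(s)=(\ell-t+1)^{s-t}\spn{n-s}{k-s}$ for $\tau\le s\le k$. We show $f$ is decreasing on $t\le s\le k-2$. By Lemma \ref{lemmaspn2lkt}(i) with $j=\max\{k,\ell\}\ge k$, and using $n\ge L(\max\{k,\ell\},t)$,
\[
\spn{n-s}{k-s}>(t+1)(j-t+1)\spn{n-s-1}{k-s-1}\ge(\ell-t+1)\spn{n-s-1}{k-s-1}.
\]
So $f(s)>f(s+1)$ for $s\le k-2$. The endpoints $s=k-1$ and $s=k$ give $f(s)\le(\ell-t+1)^{k-t}$, because $\spn{n-k+1}{1}=1$ and $\spn{n-k}{0}=0$ for $n>k$. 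Combining these, $\max_{\tau\le s\le k} f(s)\le\max\{f(\tau),(\ell-t+1)^{k-t}\}$, which is the claimed bound. If $\tau\ge k$, the second term alone covers every case.

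\emph{Main obstacle.} The hardest part is to apply Lemma \ref{lemmainductive} correctly within the chain. It must be applied to $\mathcal{F}_S$ with an $S$ that already contains several blocks, and the binomial estimate above has to absorb the extra $t-r$ blocks. The monotonicity check via Lemma \ref{lemmaspn2lkt} is routine by comparison.
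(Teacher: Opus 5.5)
Your argument is essentially the paper's. You iterate Lemma~\ref{lemmainductive} along a chain $H=H_1\subsetneq H_2\subsetneq\cdots$ until the size reaches $\tau_t(\mathcal{G})$, absorb each factor via $\binom{\ell-r}{t-r}\le(\ell-t+1)^{t-r}$, and handle overshoot using the monotonicity of $(\ell-t+1)^{m-t}\spn{n-m}{k-m}$ from Lemma~\ref{lemmaspn2lkt}(i). The one thing to fix is the endpoint $s^*=k$: there you must use $|\mathcal{F}_{S^*}|=1$ (since $S^*\in\mathcal{F}$), not $\spn{n-k}{0}=0$, because otherwise your displayed bound $|\mathcal{F}_H|\le(\ell-t+1)^{s^*-t}\spn{n-s^*}{k-s^*}$ is false at that point; with this correction the case gives exactly the term $(\ell-t+1)^{k-t}$, as in the paper.
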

	\begin{proof}
		The lemma holds trivially for $\mathcal{F}_H=\emptyset$ or $\tau_t(\mathcal{G})=t$ from (\ref{equntkt}). Now suppose $\mathcal{F}_H\neq\emptyset$ and $\tau_t(\mathcal{G})>t$. Then $H$ is not a $t$-cover of $\mathcal{G}$. Hence, there exists $G_1\in\mathcal{G}$ with $\dim(H\cap G_1)<t$. Set $H_1=H$. By Lemma \ref{lemmainductive}, we can inductively choose $G_1,G_2,\ldots,G_j\in\mathcal{G}$ and partitions $H_1\subsetneqq H_2\subsetneqq\cdots\subsetneqq H_j$ such that $|H_i\cap G_i| <t$, $|H_{i+1}|=|H_{i}|+t-|H_i\cap G_i|$ and
		\begin{equation*}
			|\mathcal{F}_{H_i}|\leq\binom{|G_i|-|H_i\cap G_i|}{t-|H_i\cap G_i|}|\mathcal{F}_{H_{i+1}}|
		\end{equation*}   
		for $1\leq i\leq j-1$, and $|H_{j-1}|<\tau_t(\mathcal{G})\leq|H_j|$.
		Note that $|G_i|\leq\ell$ for $1\leq i\leq j$. Therefore,
\begin{align*}
	|\mathcal{F}_{H}|&\leq\prod_{i=1}^{j-1}\binom{\ell-|H_i\cap G_i|}{t-|H_i\cap G_i|}|\mathcal{F}_{H_j}|\leq\mathcal(\ell-t+1)^{\sum_{i=1}^{j-1}(t-|H_i\cap G_i|)}|\mathcal{F}_{H_j}|\\
	&=\mathcal(\ell-t+1)^{\sum_{i=1}^{j-1}(|H_{i+1}|-|H_i|)}|\mathcal{F}_{H_j}|=\mathcal(\ell-t+1)^{|H_j|-t}|\mathcal{F}_{H_j}|,
\end{align*}
where in the second step we use repeatedly that
\begin{equation*}\label{equltrt}
\binom{a-c}{b-c}\leq(a-b+1)^{b-c}\;\mbox{for}\; a\geq b\geq c.
\end{equation*}
The inequality above can be easily verified as  $\frac{y}{x}<\frac{y-1}{x-1}$ for $y>x>1$.

  If $|H_j|\geq k$, then $|H_j|=k$ and $H_j\in\mathcal{F}$ as $\mathcal{F}_H$ is non-empty, and thus $|\mathcal{F}_{H_j}|=1$, implying that $|\mathcal{F}_H|\leq(\ell-t+1)^{k-t}$. If $|H_j|\leq k-1$, then from (\ref{equntkt}) and Lemma \ref{lemmamono} (i), we conclude that $|\mathcal{F}_H|\leq\mathcal(\ell-t+1)^{|H_j|-t}\spn{n-|H_j|}{k-|H_j|}\leq (\ell-t+1)^{\tau_t(\mathcal{G})-t}\spn{n-\tau_t(\mathcal{G})}{k-\tau_t(\mathcal{G})}$.
\end{proof}
The lemma above suggests associating a family with a suitable set of $t$-covers, and subsequently establishing bounds related to the family considered. For simplicity, let us introduce two expressions.
\begin{align}
f(m,k,\ell,t,n)&:=\left\{\begin{array}{ll}(\ell-t+1)^{m-t}\binom{m}{t}\spn{n-m}{k-m},&{\rm if}\;t\leq m\leq k-1,\\(\ell-t+1)^{k-t}\binom{k}{t},&{\rm if}\;m=k.\\\end{array}\right.\label{equfunf}\\
g(m,k,\ell,t,n)&:=\max\{f(m,k,\ell,t,n),f(k,k,\ell,t,n)\}.\label{equfung}
\end{align}
We note that, from Lemma \ref{lemmamono} (ii) and (\ref{equfunf}), if $n\geq L(\max\{k,\ell\},t)$, then
\begin{equation*}
g(m,k,\ell,t,n)=\left\{\begin{array}{ll}f(m,k,\ell,t,n),&{\rm if}\;t\leq m\leq k-2,\\f(k,k,\ell,t,n),&{\rm if}\;k-1\leq m\leq k.\\\end{array}\right.
\end{equation*}
\begin{lemma}\label{lemmaubprod}
Suppose $k,\ell\geq t+2$ and $n\geq L(\max\{k,\ell\},t)$. If   $\mathcal{F}\subseteq\spn{[n]}{k}$ and $\mathcal{G}\subseteq\spn{[n]}{\ell}$ are cross $t$-intersecting, then
$$|\mathcal{F}||\mathcal{G}|\leq g(\tau_t(\mathcal{G}),k,\ell,t,n)g(\tau_t(\mathcal{F}),\ell,k,t,n).$$
\end{lemma}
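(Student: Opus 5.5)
We bound $|\mathcal{F}|$ and $|\mathcal{G}|$ separately and then multiply. By symmetry it suffices to show
$$|\mathcal{F}|\leq g(\tau_t(\mathcal{G}),k,\ell,t,n);$$
the bound $|\mathcal{G}|\leq g(\tau_t(\mathcal{F}),\ell,k,t,n)$ follows by exchanging the roles of $(\mathcal{F},k)$ and $(\mathcal{G},\ell)$. The hypothesis $n\geq L(\max\{k,\ell\},t)$ is symmetric in $k$ and $\ell$, so the swap is legitimate. If $\mathcal{F}$ or $\mathcal{G}$ is empty there is nothing to prove, so we assume both are non-empty. In particular every member of $\mathcal{F}$ is a $t$-cover of $\mathcal{G}$, so $m:=\tau_t(\mathcal{G})$ is defined and satisfies $t\leq m\leq k$.

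First we fix a small $t$-cover of $\mathcal{F}$ inside $\mathcal{G}$ and use it to localize $\mathcal{F}$. Pick any $G_0\in\mathcal{G}$. It is a $t$-cover of $\mathcal{F}$ with $\ell$ blocks, but we want a cover with few blocks. Instead, take a minimum $t$-cover $T$ of $\mathcal{G}$, so $|T|=m$. Every $F\in\mathcal{F}$ shares at least $t$ blocks with every $G\in\mathcal{G}$, but that does not by itself put $F$ in relation to $T$.

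The cleaner route is to apply Lemma \ref{lemmakey} to $\mathcal{F}$ with the collection $\mathcal{G}$. Here $\mathcal{G}$ consists of $t$-covers of $\mathcal{F}$, each with exactly $\ell$ blocks. For every $t$-partition $H$ this gives
$$|\mathcal{F}_H|\leq\max\left\{(\ell-t+1)^{m-t}\spn{n-m}{k-m},\;(\ell-t+1)^{k-t}\right\}.$$
It then remains to cover $\mathcal{F}$ by few stars $\mathcal{F}_H$. Fix one $F_0\in\mathcal{F}$ with $k$ blocks. Every $F\in\mathcal{F}$ shares $t$ blocks with each $G\in\mathcal{G}$, but we need $t$ common blocks with a fixed small partition.

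So we instead use a member of a suitable cover: let $T$ be a $t$-cover of $\mathcal{F}$ of minimum size. We only know $|T|\leq\ell$, which gives $\binom{\ell}{t}$ stars rather than the required $\binom{m}{t}$.

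The main obstacle is therefore to get the factor $\binom{m}{t}$ with $m=\tau_t(\mathcal{G})$, and this is where the choice of cover matters. The fix is to use a member of $\mathcal{F}$ itself as the cover. Suppose first that $m\leq k-1$; we choose $H$ ranging over $t$-subsets of a minimal $t$-cover $T$ of $\mathcal{G}$. Then $T\cup\mathcal{F}$ consists of $t$-covers of $\mathcal{G}$, but $\mathcal{F}$ need not meet $T$. So the honest decomposition is the following: every $F\in\mathcal{F}$ is itself a $t$-cover of $\mathcal{G}$ of size $k$, and $F=\bigcup_H\{F\}$ trivially.

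We therefore split according to the intersection $F\cap T$. If $|F\cap T|\geq t$, then $F\in\mathcal{F}_H$ for some $H\in\binom{T}{t}$, which yields the $\binom{m}{t}\max\{\cdot\}$ term. If $|F\cap T|=r<t$, we apply Lemma \ref{lemmainductive} with $S=F\cap T$ and with $T$ replaced by members of $\mathcal{G}$, and absorb the result into the same estimate using Lemma \ref{lemmaspn2lkt}.

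Finally, comparing the two branches of the maximum via Lemma \ref{lemmamono}(ii) yields exactly $g(m,k,\ell,t,n)$, namely $f(m,\cdot)$ or $f(k,\cdot)$ according to whether $m\leq k-2$. I expect the case $|F\cap T|<t$ to be the delicate step, and I would first try to show that it cannot occur for a suitably chosen minimal cover $T$.
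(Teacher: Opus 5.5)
There is a genuine gap in your opening reduction. The inequality $|\mathcal{F}|\leq g(\tau_t(\mathcal{G}),k,\ell,t,n)$ that you set out to prove for each factor separately is false in general. Take $\mathcal{F}=\mathcal{D}(k,t)$ and $\mathcal{G}=\mathcal{C}(\ell,t)$; they are cross $t$-intersecting.

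\begin{itemize}
\item Every member of $\mathcal{G}$ contains $[[t+1]]$, so $\tau_t(\mathcal{G})=t$.
\item By Lemma \ref{lemmamono}(ii) with $u=t$, $g(t,k,\ell,t,n)=f(t,k,\ell,t,n)=\spn{n-t}{k-t}$.
\item But $|\mathcal{F}|=(t+1)\spn{n-t}{k-t}-t\spn{n-t-1}{k-t-1}>\spn{n-t}{k-t}$.
\end{itemize}

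Since the target inequality itself fails, your planned repair cannot work. No choice of the minimal cover $T$ of $\mathcal{G}$ makes the case $|F\cap T|<t$ vanish, and that case cannot be absorbed into a $\binom{\tau_t(\mathcal{G})}{t}$-fold star bound. For instance, with $T=[[t]]$, the members of $\mathcal{F}$ with $|F\cap T|=t-1$ number exactly $t\bigl(\spn{n-t}{k-t}-\spn{n-t-1}{k-t-1}\bigr)$. The product inequality survives in this example only because $\tau_t(\mathcal{F})=t+1$, which leaves slack of a factor at least $t+1$ in the bound for $|\mathcal{G}|$.

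The missing idea is that the binomial factors are meant to cancel across the two families. You should not try to produce the factor $\binom{\tau_t(\mathcal{G})}{t}$ inside the bound for $|\mathcal{F}|$. Instead, take $T$ to be a minimum $t$-cover of $\mathcal{F}$ (the route you discarded) and write $\mathcal{F}=\bigcup_{H\in\binom{T}{t}}\mathcal{F}_H$. Bound each star by Lemma \ref{lemmakey} applied with the collection $\mathcal{G}$. Since $\tau_t(\mathcal{G})\leq k$, you have $\binom{\tau_t(\mathcal{G})}{t}\leq\binom{k}{t}$, so $\binom{\tau_t(\mathcal{G})}{t}|\mathcal{F}_H|\leq g(\tau_t(\mathcal{G}),k,\ell,t,n)$. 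This gives
\[
\binom{\tau_t(\mathcal{G})}{t}|\mathcal{F}|\leq\binom{\tau_t(\mathcal{F})}{t}g(\tau_t(\mathcal{G}),k,\ell,t,n),\qquad\binom{\tau_t(\mathcal{F})}{t}|\mathcal{G}|\leq\binom{\tau_t(\mathcal{G})}{t}g(\tau_t(\mathcal{F}),\ell,k,t,n),
\]
where the second inequality follows by the same argument with the roles swapped. Multiplying the two, the binomial coefficients cancel. This is exactly the paper's proof: the lemma is genuinely a statement about the product, not about each factor.
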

\begin{proof}
Since $\mathcal{F}$ and $\mathcal{G}$ are cross $t$-intersecting, every member of $\mathcal{F}$ is a $t$-cover of $\mathcal{G}$, and hence $\tau_t(\mathcal{G})\leq k$. Let $T$ be a $t$-cover of $\mathcal{F}$ of size $\tau_t(\mathcal{F})$, then $\mathcal{F}=\cup_{H}\mathcal{F}_H$ with $H$ ranging over $t$-subsets of $T$, and so  $|\mathcal{F}|\leq\binom{\tau_t(\mathcal{F})}{t}|\mathcal{F}_{H_0}|$ for some $H_0\in\binom{T}{t}$. This together with Lemma \ref{lemmakey} and $\tau_t(\mathcal{G})\leq k$ yields
\begin{align*}
	\binom{\tau_t(\mathcal{G})}{t}|\mathcal{F}|&\leq\binom{\tau_t(\mathcal{F})}{t}g(\tau_t(\mathcal{G}),k,\ell,t,n).
\end{align*}
By symmetry, we have
$$\binom{\tau_t(\mathcal{F})}{t}|\mathcal{G}|\leq\binom{\tau_t(\mathcal{G})}{t} g(\tau_t(\mathcal{F}),\ell,k,t,n).$$
Multiplying the two inequalities above yields the desired inequality.
\end{proof}
\begin{proposition}\label{propell=t+1}
Let $n\geq k\geq t+2$. If $\mathcal{F}\subseteq\spn{[n]}{k}$ and $\mathcal{G}\subseteq\spn{[n]}{t+1}$ are maximal  cross $t$-intersecting families, then one of the following hold. 
\begin{itemize}
\item[\rm(i)]$\mathcal{F}=\left\{F\in\spn{[n]}{k}:|F\cap G|=t\right\}$ and $\mathcal{G}=\{G\}$ for some $G\in\spn{[n]}{t+1}$.
\item[\rm(ii)]$\mathcal{F}=\left\{F\in\spn{[n]}{k}:M\subseteq F\right\}\;\mbox{and}\;\mathcal{G}=\left\{A\cup\{\overline{\cup A}\}:A\in\binom{M}{t}\right\}$ for some partition $M$ satisfying $|M|\geq t+1$ and $M\notin\spn{[n]}{t+1}$.
\end{itemize}
Moreover, if $n\geq L(k,t)$, then $|\mathcal{F}||\mathcal{G}|\leq\spn{n-t}{k-t}$, with equality only if there is a partition $X$ consisting of $t$ singletons such that  $\mathcal{F}=\left\{F\in\spn{[n]}{k}:X\subseteq F\right\}$ and $\mathcal{G}=\{X\cup\{\overline{\cup X}\}\}$.
\end{proposition}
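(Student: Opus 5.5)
The plan is to first pin down the structure using the fact that a $(t+1)$-partition of $[n]$ is determined by any $t$ of its blocks: the last block is the complement. Then we bound the product in each of the two resulting cases.

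\emph{Structure.} Since $k\geq t+2$, no $F\in\mathcal{F}$ can contain all $t+1$ blocks of some $G\in\mathcal{G}$, because then $F=G$. Hence $|F\cap G|=t$ exactly, and the unique block of $G$ missing from $F$ is split among the other $k-t$ blocks of $F$. If $|\mathcal{G}|=1$, say $\mathcal{G}=\{G\}$, maximality forces $\mathcal{F}$ to be all $k$-partitions meeting $G$ in $t$ blocks, which is (i). If $|\mathcal{G}|\geq2$, let $M:=\cap\mathcal{F}$, the set of blocks common to all members of $\mathcal{F}$. The aim is to show that every $G\in\mathcal{G}$ has $t$ of its blocks in $M$. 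Once this holds, every $A\cup\{\overline{\cup A}\}$ with $A\in\binom{M}{t}$ is cross $t$-intersecting with $\{F:M\subseteq F\}$. Maximality then gives $\mathcal{F}=\{F:M\subseteq F\}$ and $\mathcal{G}=\{A\cup\{\overline{\cup A}\}:A\in\binom{M}{t}\}$. Moreover $M\notin\spn{[n]}{t+1}$, since otherwise $\mathcal{G}$ would collapse to the single partition $M$.

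To prove the claim, take $G_1\neq G_2$ in $\mathcal{G}$, which share at most $t-1$ blocks. In any $F$, the $t$ blocks of $G_1$ and the $t$ blocks of $G_2$ lying in $F$ are blocks of one partition, so any two of them are equal or disjoint. Hence some block of $G_2$ in $F$ lies inside the block $C_1(F)$ of $G_1$ missing from $F$. I would use this rigidity, together with maximality, to show that the missing block $C_1(F)$ does not depend on $F$. Concretely, assume $F,F'$ miss different blocks $B_i,B_j$ of $G_1$. Then I would construct a partition of $[n]$ that cross-intersects $\mathcal{G}$ in a way contradicting the constraints already forced on $\mathcal{F}$ by $G_2$. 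This step is the main obstacle and I expect a careful case analysis on how the blocks of $G_2$ sit inside those of $G_1$.

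\emph{Bounds.} In case (i), write the blocks of $G$ as $B_0,\dots,B_t$. Counting members of $\mathcal{F}$ by the missing block gives
\[
|\mathcal{F}|\leq\sum_{j=0}^t\spn{|B_j|}{k-t}, \qquad \sum_j|B_j|=n .
\]
The superadditivity $\spn{a}{m}+\spn{b}{m}\leq\spn{a+b-1}{m}$ for $m=k-t\geq2$ follows from (\ref{equrecurrence}), with strictness unless one of $a,b$ is $1$. Applying it repeatedly yields $|\mathcal{F}||\mathcal{G}|\leq\spn{n-t}{k-t}$, with equality only when $t$ blocks of $G$ are singletons $X$ and $G=X\cup\{\overline{\cup X}\}$.

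In case (ii), set $m=|M|\geq t+1$. Then $|\mathcal{F}||\mathcal{G}|=\binom{m}{t}\spn{n-|\cup M|}{k-m}\leq\binom{m}{t}\spn{n-m}{k-m}$. For $m\leq k-1$, iterate Lemma \ref{lemmaspn2lkt}(i) with $j=k$ to get
\[
\spn{n-t}{k-t}>\bigl((t+1)(k-t+1)\bigr)^{m-t}\spn{n-m}{k-m}\geq\binom{m}{t}\spn{n-m}{k-m},
\]
which is strict; the endpoint $m=k-1$ needs a separate direct estimate. For $m=k$ we have $|\mathcal{F}|\leq1$, and $\binom{k}{t}<\spn{n-t}{k-t}$ for $n\geq L(k,t)$, for instance by Lemma \ref{lemmaspnntkt}. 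So equality in the product bound only occurs in the stated configuration from case (i).
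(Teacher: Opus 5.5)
\textbf{There is a genuine gap in the structural part for $|\mathcal{G}|\geq2$.} Your reduction is correct: once every $G\in\mathcal{G}$ has $t$ blocks lying in all members of $\mathcal{F}$, maximality gives (ii). But that claim is never proved. You only announce a case analysis and an unspecified partition built via maximality, so structure (ii) is left open. The missing idea needs neither maximality nor a case analysis.

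Fix $F_0\in\mathcal{F}$ and write each $G\in\mathcal{G}$ as $G=A_G\cup\{\overline{\cup A_G}\}$, where $A_G:=F_0\cap G$. Suppose some $F\in\mathcal{F}$ omits a block of $A_1:=A_{G_1}$.
\begin{itemize}
\item Since $|F\cap G_1|=t$, the omitted block is the only block of $G_1$ missing from $F$, so $\overline{\cup A_1}\in F$.
\item Take $G_2\neq G_1$ in $\mathcal{G}$ and put $A_2:=A_{G_2}\neq A_1$. Every block of $A_2\setminus A_1$ is a block of $F_0$ contained in $\overline{\cup A_1}$. It therefore cannot be a block of $F$: it would have to equal $\overline{\cup A_1}$, which would force $F_0=G_1$.
\item Hence $F\cap A_2\subseteq A_1\cap A_2$, which has at most $t-1$ blocks. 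Since $|F\cap G_2|=t$, this forces $|A_1\cap A_2|=t-1$ and $\overline{\cup A_2}\in F$.
\item Now $|A_1\cup A_2|=t+1<k$. So $\overline{\cup A_1}\neq\overline{\cup A_2}$ are two blocks of $F$ that meet in $\overline{\cup(A_1\cup A_2)}\neq\emptyset$, a contradiction.
\end{itemize}
Thus $M:=\bigcup_{G\in\mathcal{G}}A_G$ lies in every member of $\mathcal{F}$, which is exactly the claim you needed.

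\textbf{The bounds are essentially fine, with some differences from the paper and a few small fixes.}
\begin{itemize}
\item In case (i), your merging argument via $\spn{a}{m}+\spn{b}{m}\leq\spn{a+b-1}{m}$ replaces the paper's estimate, which runs over the blocks of size at least $k-t$ and uses (\ref{equnkn-1k}). Your route is cleaner.
\item However, ``strict unless one of $a,b$ is $1$'' is false when $a+b-1<m$, since both sides vanish. Fix this by merging everything into a largest block; that block has size at least $k-t$ as soon as the sum is positive.
\item In case (ii), iterating Lemma \ref{lemmaspn2lkt}(i) is legitimate up to $s=k-t-2$, so $m=k-1$ needs no separate treatment. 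The bound $\binom{m}{t}\leq(t+1)^{m-t}$ justifies your last inequality.
\item The case $m=k$, i.e.\ $\binom{k}{t}<\spn{n-t}{k-t}$, is plausible via Lemma \ref{lemmaspnntkt} but is only asserted, so you still need to carry it out. The paper obtains it from Lemma \ref{lemmamono''} through Lemma \ref{lemmamono}(iii).
\end{itemize}
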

\begin{proof}
For all $F\in\mathcal{F}$ and $G\in\mathcal{G}$, we have $|F\cap G|=t$ as $k\geq t+2$, and hence $F\setminus G$ is a $(k-t)$-partition of the block in $G\setminus F$.
	
Suppose first that $|\mathcal{G}|=1$. Let $\mathcal{G}=\{G\}$. Then  $\mathcal{F}=\left\{F\in\spn{[n]}{k}:|F\cap G|=t\right\}$ as the families are maximal. Let $G'$ be the collection of blocks of $G$ with size at least $k-t$. It follows that $|\mathcal{F}|=\sum_{B\in G'}\spn{|B|}{k-t}$. If $s:=|G'|=1$, then we have $|\mathcal{F}|\leq\spn{n-t}{k-t}$, with equality if and only if $G$ consists of $t$ singletons and a block of size $n-t$, and $\mathcal{F}$ is the family of $k$-partitions of $[n]$ containing these  singletons. Suppose $s\geq2$. For all $B\in G'$ we have
$$n=\sum_{C\in G}|C|\geq|B|+(s-1)(k-t)+(t+1-s),$$
and so $n-t-|B|\geq(s-1)(k-t-1)$. It follows from (\ref{equnkn-1k}) that
\begin{align*}
	|\mathcal{F}|\spn{n-t}{k-t}^{-1}&<\sum_{B\in G'}(k-t)^{-(n-t-|B|)}\leq s(k-t)^{-(s-1)(k-t-1)}\leq s2^{-(s-1)}\leq1.
\end{align*}

It remains to consider the case that  $|\mathcal{G}|\geq2$. Fix an $F_0\in\mathcal{F}$. Set $\mathcal{A}=\{F_0\cap G:G\in\mathcal{G}\}$, then  $\mathcal{G}=\left\{A\cup\{\overline{\cup A}\}:A\in\mathcal{A}\right\}$. We claim that $\mathcal{F}\subseteq\left\{F\in\spn{[n]}{k}:\cup\mathcal{A}\subseteq F\right\}$.
 To prove this, let us assume to the contrary that $B_1\notin F$ for some $F\in\mathcal{F}$ and $B_1\in\cup\mathcal{A}$. Pick an  $A_1\in\mathcal{A}$ with $B_1\in A_1$, and write $G_1:=A_1\cup\{\overline{\cup A_1}\}$. Of course $F\cap G_1=G_1\setminus\{B_1\}$ and  $\overline{\cup A_1}\in F$. Since   $|\mathcal{A}|=|\mathcal{G}|\geq2$, we can pick  $A_2\in\mathcal{A}$ distinct with $A_1$. By assumption we have $|F\cap(A_2\cup\{\overline{\cup A_2}\})|=t$, and necessarily $|F\cap A_2|\geq t-1$. Note that $A_1\cup A_2\subseteq F_0$, then every block of $A_2\setminus A_1$ is a subset of $\overline{\cup A_1}$, and so it does not belong to $F$ as $\overline{\cup A_1}\in F$. Hence $F\cap A_2\subseteq A_1\cap A_2$, and consequently $F\cap A_2=A_1\cap A_2$ has size exactly $t-1$. In particular, we obtain that $\overline{\cup A_2}\in F$. However, since $A_1\cup A_2$ has $t+1<k$ blocks, it is a proper subset of $F_0$, and hence $\overline{\cup A_1}\cap\overline{\cup A_2}=\overline{\cup(A_1\cup A_2)}\neq\emptyset$. This contradicts that both $\overline{\cup A_1}$ and $\overline{\cup A_2}$ lie in $F$. Thus the claim is true. Set $M=\cup\mathcal{A}$. Then we have  $\mathcal{F}\subseteq\left\{F\in\spn{[n]}{k}:M\subseteq F\right\}$ and  $\mathcal{G}\subseteq\left\{A\cup\{\overline{\cup A}\}:A\in\binom{M}{t}\right\}$. Hence (ii) follows from the maximality of $\mathcal{F}$ and $\mathcal{G}$.  Note that $|M|\geq t+1$ and $M\notin\spn{[n]}{t+1}$ as $|\mathcal{G}|\geq2$. To complete the proof, it remains to establish the upper bound provided $n\geq L(k,t)$. Write $m=|M|$ for short. It is easy to check that $|\mathcal{F}||\mathcal{G}|=\spn{n-|\cup M|}{k-m}\binom{m}{t}$ for $t+1\leq m<k$, and $|\mathcal{F}||\mathcal{G}|=\binom{k}{t}$ for $m=k$. This together with Lemma \ref{lemmamono} (iii) yields  $|\mathcal{F}||\mathcal{G}|\leq g(m,k,t,t,n)<\spn{n-t}{k-t}$, as required.
\end{proof}
\noindent {\bf Proof of Theorem \ref{thm1}.}\; Using a standard technique (see e.g., \cite{Cao-Lu-Lv-Wang-2023}), if we can prove the theorem for $r=2$, then it also holds for all $r\geq3$. Indeed, suppose that the theorem holds for cross $t$-intersecting families, and suppose $r\geq3$ and  $\mathcal{F}_1\subseteq\spn{[n]}{k_1},\ldots,\mathcal{F}_r\subseteq\spn{[n]}{k_r}$ are $r$-cross $t$-intersecting. Then the families are pairwise cross $t$-intersecting, and  $$|\mathcal{F}_i||\mathcal{F}_j|\leq\spn{n-t}{k_i-t}\spn{n-t}{k_j-t}\;\mbox{whenever}\;i\neq j\in[r].$$ 
To see this, note that if $\max\{k_i,k_j\}\geq t+2$, then the bound holds by applying the theorem to $\mathcal{F}_i$ and $\mathcal{F}_j$. When  $k_i=k_j=t+1$, we have  $\mathcal{F}_i=\mathcal{F}_j=\{F\}$ for some $F\in\spn{[n]}{t+1}$, since every two distinct partitions in $\spn{[n]}{t+1}$ has less than $t$ blocks in common. Thus the bound is also true as $\spn{n-t}{1}=1$. It follows that 
$$\prod_{i=1}^r|\mathcal{F}_i|=\left(\prod_{i< j}|\mathcal{F}_i||\mathcal{F}_j|\right)^{\frac{1}{r-1}}\leq\left(\prod_{i< j}\spn{n-t}{k_i-t}\spn{n-t}{k_j-t}\right)^{\frac{1}{r-1}}=\prod_{i=1}^r\spn{n-t}{k_i-t}.$$
If equality holds, then necessarily $|\mathcal{F}_1||\mathcal{F}_j|=\spn{n-t}{k_1-t}\spn{n-t}{k_j-t}$ for $2\leq j\leq r$. From the characterization given in the theorem, there are $t$-partitions $X_2,\ldots, X_r$, each consisting of singletons, such that  $\mathcal{F}_1=\left\{F\in\spn{[n]}{k_1}:X_j\subseteq F\right\}$ and $\mathcal{F}_j=\left\{F\in\spn{[n]}{k_j}:X_j\subseteq F\right\}$ for $j=2,\ldots,r$. Since $|\mathcal{F}_1|=\spn{n-t}{k_1-t}$, the partitions  $X_j$ must coincide. 

Therefore, we need only to prove the theorem for $r=2$. In the case of $k_2=t+1$, the theorem holds from Proposition \ref{propell=t+1}. Suppose $k_2\geq t+2$. By Lemmas \ref{lemmaubprod} and \ref{lemmamono} (iii), we obtain that
$$|\mathcal{F}_1||\mathcal{F}_2|\leq g(\tau_t(\mathcal{F}_2),k_1,k_2,t,n)g(\tau_t(\mathcal{F}_1),k_2,k_1,t,n)\leq\spn{n-t}{k_1-t}\spn{n-t}{k_2-t}.$$
 Suppose equality holds. Then clearly $\mathcal{F}_1$ and $\mathcal{F}_2$ are maximal. By Lemma \ref{lemmamono} (iii) again, we find that $\tau_t(\mathcal{F}_1)=\tau_t(\mathcal{F}_2)=t$. It follows from Lemma \ref{lemmatt} that $\mathcal{F}_1$ and $\mathcal{F}_2$ are trivial. This together with $|\mathcal{F}_1||\mathcal{F}_2|=\spn{n-t}{k_1-t}\spn{n-t}{k_2-t}$ and (\ref{equntkt}) yields that $\mathcal{F}_i=\left\{F\in\spn{[n]}{k_i}:X\subseteq F\right\}$($i=1,2$) for some partition $X$ consisting of $t$  singletons.{\hfill$\square$}
\begin{remark}
The quantity $L(k,t)$ is chosen in order to avoid tedious calculations. An additional consideration is that the present method is unlikely to yield a significantly better bound. It might be interesting to consider whether there is an absolutely constant $C$ such that Theorem \ref{thm1} holds for $n\geq Ck_1\log(t+1)$.\end{remark}

	\section{Non-trivial cross $t$-intersecting families}\label{section3}
In this section we characterize non-trivial cross $t$-intersecting families with large product of sizes. To begin with, let us recall a useful result related to inclusion-exclusion.
\begin{lemma}\label{lemmain-ex}
	Let $A_1,A_2,\ldots,A_m$ be finite sets. Then $$|\cup_{i\in[m]}A_i|\leq{\textstyle\sum_{j\in[s]}}(-1)^{j-1}{\textstyle\sum_{J\in\binom{[m]}{j}}}|\cap_{\ell\in J}A_\ell|$$
	for odd $s$, and 
	$$|\cup_{i\in[m]}A_i|\geq{\textstyle\sum_{j\in[s]}}(-1)^{j-1}{\textstyle\sum_{J\in\binom{[m]}{j}}}|\cap_{\ell\in J}A_\ell|$$
	for even $s$.
\end{lemma}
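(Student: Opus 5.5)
These are the Bonferroni inequalities, and I would prove them by counting each element's contribution. For each $x\in\cup_{i\in[m]}A_i$, let $I_x=\{i\in[m]:x\in A_i\}$ and $p=|I_x|\geq1$. Elements outside the union contribute zero to both sides. Element $x$ contributes $1$ to the left-hand side. On the right-hand side, $x$ lies in $\cap_{\ell\in J}A_\ell$ exactly when $J\subseteq I_x$. So its contribution is
\[
S_s(p):=\sum_{j=1}^{s}(-1)^{j-1}\binom{p}{j}.
\]
Summing over $x$, it therefore suffices to show that $S_s(p)\geq1$ for odd $s$ and $S_s(p)\leq1$ for even $s$, for every $p\geq1$.

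The key step is the standard partial alternating sum identity
\[
\sum_{j=0}^{s}(-1)^j\binom{p}{j}=(-1)^s\binom{p-1}{s},
\]
which I would prove by induction on $s$ using Pascal's rule $\binom{p}{j}=\binom{p-1}{j}+\binom{p-1}{j-1}$, so that the sum telescopes. It gives
\[
1-S_s(p)=\sum_{j=0}^{s}(-1)^j\binom{p}{j}=(-1)^s\binom{p-1}{s}.
\]
For odd $s$ this quantity is $\leq0$, so $S_s(p)\geq1$. For even $s$ it is $\geq0$, so $S_s(p)\leq1$. This holds for all $p\geq1$, including $p\leq s$, where the binomial coefficient vanishes and equality holds.

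Summing these per-element inequalities over all $x\in\cup_i A_i$ yields both stated bounds. There is no real obstacle here; the only point needing care is the telescoping identity and its sign bookkeeping. Alternatively, the whole statement is classical, so the paper could simply cite it.
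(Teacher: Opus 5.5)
Your proof is correct and complete: the per-element reduction to $S_s(p)$ and the identity $\sum_{j=0}^{s}(-1)^j\binom{p}{j}=(-1)^s\binom{p-1}{s}$ (valid because $p\geq1$) give exactly the right signs for odd and even $s$. The paper gives no proof of its own and simply recalls these classical Bonferroni inequalities, so your argument is the standard one it implicitly relies on.
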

For simplicity, we set
\begin{equation}\label{equsizeabcd}
	r_1(n,k,\ell,t):=|\mathcal{A}(k,\ell,t)||\mathcal{B}(\ell,t)|\;\mbox{and}\;r_2(n,k,\ell,t):=|\mathcal{C}(k,t)||\mathcal{D}(\ell,t)|.
\end{equation}
An easy counting argument using inclusion-exclusion yields
\begin{align}
r_1(n,k,\ell,t)&=\left(\sum_{j=1}^{\ell-t}(-1)^{j-1}\binom{\ell-t}{j}\spn{n-t-j}{k-t-j}\right)\left(\spn{n-t}{\ell-t}+t\right),\;\mbox{and}\label{equfunr1}\\
r_2(n,k,\ell,t)&=\spn{n-t-1}{k-t-1}\left((t+1)\spn{n-t}{\ell-t}-t\spn{n-t-1}{\ell-t-1}\right).\label{equfunr2}
\end{align}
Define
\begin{align}\label{equfunr}
	r(n,k,\ell,t)&:=\max\{r_1(n,k,\ell,t),r_2(n,k,\ell,t)\}.
\end{align}
A useful bound given by Lemmas \ref{lemmain-ex} and \ref{lemmaspn2lkt} (ii) is
\begin{align}\label{equlbfunr}
r(n,k,\ell,t)>\max\left\{\ell-t-\frac{1}{2(t+1)^2},\;t+1-\frac{1}{2(t+1)}\right\}\spn{n-t-1}{k-t-1}\spn{n-t}{\ell-t}
\end{align}
for $k,\ell\geq t+2$ and $n\geq2L(\max\{k,\ell\},t)$.
\begin{lemma}\label{lemmat+2small}
	Let $k,\ell\geq t+2$ and $n\geq2L(\max\{k,\ell\},t)$. Suppose that $\mathcal{F}\subseteq\spn{[n]}{k}$ and $\mathcal{G}\subseteq\spn{[n]}{\ell}$ are non-trivial cross $t$-intersecting families. 
	If $\{\tau_t(\mathcal{F}),\tau_t(\mathcal{G})\}\neq\{t,t+1\}$, then $|\mathcal{F}||\mathcal{G}|<\max\{r(n,k,\ell,t),r(n,\ell,k,t)\}$.
\end{lemma}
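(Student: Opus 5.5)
The plan is to feed the covering numbers into Lemma \ref{lemmaubprod} and compare the resulting bound with the lower bound (\ref{equlbfunr}) for $r$. Write $a=\tau_t(\mathcal{G})$ and $b=\tau_t(\mathcal{F})$, so that $t\leq a\leq k$ and $t\leq b\leq\ell$. By Lemma \ref{lemmaubprod},
$$|\mathcal{F}||\mathcal{G}|\leq g(a,k,\ell,t,n)\,g(b,\ell,k,t,n).$$
We may assume both families are maximal, since enlarging them keeps them cross $t$-intersecting, keeps them non-trivial or makes the product only larger, and the target bound is strict. Then Lemma \ref{lemmatt} excludes $a=b=t$. Together with the hypothesis $\{a,b\}\neq\{t,t+1\}$, the remaining cases are:
\begin{itemize}
\item[(a)] $a,b\geq t+1$;
\item[(b)] one of $a,b$ equals $t$ and the other is at least $t+2$.
\end{itemize}
Because $r(n,k,\ell,t)$ and $r(n,\ell,k,t)$ both appear in the maximum, the roles of $\mathcal{F}$ and $\mathcal{G}$ are symmetric, so it suffices to compare with whichever one is convenient.

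\textbf{Shape of the bound.} For $t\leq m\leq k-2$ we have $g(m,k,\ell,t,n)=(\ell-t+1)^{m-t}\binom{m}{t}\spn{n-m}{k-m}$. Each increase of $m$ by one costs a factor $\spn{n-m}{k-m}/\spn{n-m-1}{k-m-1}$, which by Lemma \ref{lemmaspn2lkt}(ii) exceeds $(t+1)^2(\max\{k,\ell\}-t+1)^2$. This easily dominates the gain $(\ell-t+1)\binom{m+1}{t}/\binom{m}{t}$. Hence $g(m,\cdot)$ is decreasing fast in $m$, by a factor of roughly $(t+1)(\max-t+1)$ per step (Lemma \ref{lemmamono}(ii) presumably records this). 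The end values $m\in\{k-1,k\}$ give the constant $(\ell-t+1)^{k-t}\binom{k}{t}$, which is negligible against $\spn{n-t-1}{k-t-1}$ for $n\geq2L$.

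\textbf{Case (a).} Here
$$|\mathcal{F}||\mathcal{G}|\leq g(t+1,k,\ell,t,n)\,g(t+1,\ell,k,t,n)\approx (\ell-t+1)(k-t+1)(t+1)^2\spn{n-t-1}{k-t-1}\spn{n-t-1}{\ell-t-1}.$$
Apply Lemma \ref{lemmaspn2lkt}(ii) once to $\spn{n-t}{\ell-t}$ versus $\spn{n-t-1}{\ell-t-1}$. This shows the bound is below $\frac{1}{\ldots}\spn{n-t-1}{k-t-1}\spn{n-t}{\ell-t}$, which is far less than the right side of (\ref{equlbfunr}).

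\textbf{Case (b).} Suppose, say, $b=t$ and $a\geq t+2$. Then the second factor is at most $\spn{n-t}{\ell-t}$ and the first is at most
$$(\ell-t+1)^2\binom{t+2}{t}\spn{n-t-2}{k-t-2},$$
or the constant term when $k=t+2$. Using Lemma \ref{lemmaspn2lkt}(ii) to compare $\spn{n-t-2}{k-t-2}$ with $\spn{n-t-1}{k-t-1}$, the product is below $\spn{n-t-1}{k-t-1}\spn{n-t}{\ell-t}$, again beating (\ref{equlbfunr}). The mirror case $a=t$, $b\geq t+2$ is identical, comparing with $r(n,\ell,k,t)$.

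\textbf{Main obstacle.} I expect the hardest part to be the boundary subcases where $k$ or $\ell$ equals $t+2$ or $t+3$. There $g$ takes the value $f(k,\dots)$, so one must check that $(\ell-t+1)^{k-t}\binom{k}{t}$ is smaller than $\spn{n-t-1}{k-t-1}$ up to the needed factor. I would handle this directly: $\spn{n-t-1}{1}=1$ is a degenerate case, and otherwise use $\spn{n-t-1}{k-t-1}\geq 2^{n-t-2}-1$ with $n\geq2L$. I would also check the constants carefully when $t=1$.
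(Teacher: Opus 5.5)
Your outline (Lemma \ref{lemmaubprod} plus a case split on the covering numbers, with boundary constants absorbed by $\spn{n-t-1}{2}=2^{n-t-2}-1$) is the paper's. Case (a), and the subcases of (b) where the $t$-covered family has uniformity at least $t+3$, go through as you indicate.

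\textbf{The gap.} There is a genuine gap in Case (b) when the family with covering number $t$ is $(t+2)$-uniform. In your orientation this is $\tau_t(\mathcal{F})=t$, $\tau_t(\mathcal{G})\geq t+2$, $k=t+2$.
\begin{itemize}
\item The first factor is then the constant $(\ell-t+1)^2\binom{t+2}{2}$, while $\spn{n-t-1}{k-t-1}=1$. So your comparison with $\spn{n-t-1}{k-t-1}\spn{n-t}{\ell-t}$ is false.
\item For $\ell\geq t+3$ this is repairable. Compare instead with $r(n,\ell,k,t)$, which carries the factor $\spn{n-t}{2}$. You would need an added upper estimate of $\spn{n-t}{\ell-t}/\spn{n-t-1}{\ell-t-1}$ of order $\bigl(\frac{\ell-t}{\ell-t-1}\bigr)^{n-t-1}$, e.g.\ from Lemma \ref{lemmaestmtspn}. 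Lemma \ref{lemmaspn2lkt} only bounds such ratios from below.
\item For $k=\ell=t+2$ no counting of this kind can work. Lemma \ref{lemmaubprod} only gives
\[
|\mathcal{F}||\mathcal{G}|\leq 9\binom{t+2}{2}\spn{n-t}{2},\qquad\text{whereas}\qquad r(n,t+2,t+2,t)=\max\Bigl\{2\spn{n-t}{2}+2t,\;(t+1)\spn{n-t}{2}-t\Bigr\}.
\]
The bound overshoots by a factor of about $4.5(t+2)$, and there is no exponential slack in $n$ to absorb it. Calling $\spn{n-t-1}{1}=1$ \emph{degenerate} leaves exactly this case open.
\end{itemize}

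\textbf{The missing idea} is the structural argument the paper uses here. Let $X\subseteq\cap\mathcal{F}$ with $|X|=t$ and $k=t+2$.
\begin{itemize}
\item Suppose $\mathcal{F}$ had two members $X\cup\{B_1,B_2\}\neq X\cup\{B_3,B_4\}$. Every $G\in\mathcal{G}$ with $X\not\subseteq G$ must contain a block from each pair.
\item These pairs are distinct $2$-partitions of $\overline{\cup X}$, so at most one cross pair, say $B_1,B_3$, is disjoint. Hence every such $G$ contains $B_1$ and exactly $t-1$ blocks of $X$.
\item So $X\cup\{B_1\}$ is a $t$-cover of $\mathcal{G}$ with $t+1$ blocks, contradicting $\tau_t(\mathcal{G})\geq t+2$. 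Thus $\mathcal{F}=\{F\}$.
\item $F$ has at most $t$ singletons, since otherwise $t+1$ of them would form such a cover.
\item Lemma \ref{lemmaw}(i), with uniformity $\ell$, gives $|\mathcal{F}||\mathcal{G}|\leq|\mathcal{W}(F)|<(t+0.6)\spn{n-t}{\ell-t}$. By Lemma \ref{lemmaspn2lkt}(i) this is less than $(t+1)\spn{n-t}{\ell-t}-t\spn{n-t-1}{\ell-t-1}=r_2(n,t+2,\ell,t)$.
\end{itemize}
The paper uses this for every $\ell$, which also settles $k=t+2<\ell$ without the extra ratio estimate.

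\textbf{A minor point.} Your reduction to maximal families is not justified as written. Enlarging the families can raise $\tau_t$ and push the pair into the excluded case $\{t,t+1\}$. The paper's appeal to Lemma \ref{lemmatt} tacitly makes the same assumption. This is harmless there only because the lemma is applied to optimal, hence maximal, pairs.
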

\begin{proof}Let us write $m_f=\tau_t(\mathcal{F})$ and $m_g=\tau_t(\mathcal{G})$ for ease of notations. From Lemma \ref{lemmatt}, we have $(m_f,m_g)\neq(t,t)$. The proof divides into two cases.\\
{\bf Case 1.}\;$m_g=t$.

In this case, we have $m_f\geq t+2$. Suppose first $\ell\geq t+3$. From Lemma \ref{lemmaubprod}, we have  $|\mathcal{F}||\mathcal{G}|\leq g(m_g,k,\ell,t,n)g(m_f,\ell,k,t,n)$. Note that Lemma \ref{lemmamono} (ii) gives   $g(m_g,k,\ell,t,n)=\spn{n-t}{k-t}$. If $\ell\geq t+4$, then by Lemmas \ref{lemmamono} (ii) and \ref{lemmaspn2lkt} (ii), we get $g(m_f,\ell,k,t,n)\leq f(t+2,\ell,k,t,n)<\spn{n-t-1}{\ell-t-1}$, and hence $|\mathcal{F}||\mathcal{G}|<\spn{n-t}{k-t}\spn{n-t-1}{\ell-t-1}<r(n,\ell,k,t)$.
If $\ell=t+3$, then $g(m_f,\ell,k,t,n)=\binom{t+3}{t}(k-t+1)^{3}$. On the other hand, note that for $n\geq2L(\max\{k,\ell\},t)$, 
\begin{align}\label{equlemmat+2small1}
	\spn{n-t-1}{2}&=2^{n-t-2}-1\geq2^t((t+1)(k-t+1))^{\ell-t+1}-1\nonumber\\&>2^tt(t+1)^3(k-t+1)^4.
\end{align}
Then it is readily seen that $g(m_f,\ell,k,t,n)<t\spn{n-t-1}{2}$. Thus we get again $|\mathcal{F}||\mathcal{G}|<t\spn{n-t}{k-t}\spn{n-t-1}{\ell-t-1}<r(n,\ell,k,t)$.

Suppose $\ell=t+2$. We claim that $|\mathcal{G}|=1$. Indeed, suppose to the contrary that $|\mathcal{G}|\geq2$, and pick $G_1\neq G_2\in\mathcal{G}$. Fix a $t$-partition $X\subseteq\cap\mathcal{G}$, and write $G_1=X\cup\{B_1,B_2\}$ and $G_2=X\cup\{B_3,B_4\}$. Since $\mathcal{F}$ and $\mathcal{G}$ are non-trivial, $\mathcal{F}\setminus\mathcal{F}_X\neq\emptyset$. Each member of $\mathcal{F}\setminus\mathcal{F}_X$ intersects both $\{B_1,B_2\}$ and $\{B_3,B_4\}$. Then necessarily $B_i\cap B_j=\emptyset$ for some  $i\in\{1,2\}$ and $j\in\{3,4\}$. Without loss of generality, suppose $B_1\cap B_3=\emptyset$. Since $\{B_1,B_2\}$ and $\{B_3,B_4\}$ are distinct partitions of $\overline{\cup X}$, we have $B_i\cap B_j\neq\emptyset$ whenever $(i,j)\in\{(1,4),(2,3),(2,4)\}$. It follows that  $F\cap\{B_1,B_2\}=\{B_1\}$, $F\cap\{B_3,B_4\}=\{B_3\}$ and $|F\cap X|=t-1$ for all $F\in\mathcal{F}\setminus\mathcal{F}_X$. Thus  $X\cup\{B_1\}$ is a $t$-cover of $\mathcal{F}$, which is impossible as $m_f\geq t+2$. This contradiction proves $|\mathcal{G}|=1$. Let $\mathcal{G}=\{G\}$. If $G$ contains $t+1$ singletons, then every member of $\mathcal{F}$ contains at least $t$ of them, which contradicts $m_f\geq t+2$. Hence $G$ contains at most $t$ singletons, and then Lemma \ref{lemmaw} yields $|\mathcal{F}||\mathcal{G}|<(t+0.6)\spn{n-t}{k-t}<r(n,\ell,k,t)$.\\
{\bf Case 2.}\;$m_g\geq t+1$.

If $m_f=t$, then $m_g\geq t+2$ due to our assumption that $\{\tau_t(\mathcal{F}),\tau_t(\mathcal{G})\}\neq\{t,t+1\}$, and so the same argument as in Case 1 applies to $\mathcal{G}$ and $\mathcal{F}$, yielding  $|\mathcal{G}||\mathcal{F}|<r(n,k,\ell,t)$. Now we suppose $m_f\geq t+1$. 

If $k,\ell\geq t+3$, then from Lemmas \ref{lemmaubprod} and \ref{lemmamono} (ii), we have 
\begin{align*}
	|\mathcal{F}||\mathcal{G}|&\leq f(t+1,k,\ell,t,n)f(t+1,\ell,k,t,n)\\
	&=(t+1)^2(k-t+1)(\ell-t+1)\spn{n-t-1}{k-t-1}\spn{n-t-1}{\ell-t-1},
\end{align*}
and hence Lemma \ref{lemmaspn2lkt} (ii) implies $|\mathcal{F}||\mathcal{G}|<\spn{n-t-1}{k-t-1}\spn{n-t}{\ell-t}<r(n,k,\ell,t)$.

 It remains, by symmetry, to consider the case of  $\ell=t+2$. Since $m_f\geq t+1$, we have $g(m_f,\ell,k,t,n)=(k-t+1)^2\binom{t+2}{t}$. By Lemma \ref{lemmamono} (ii),  $g(m_g,k,\ell,t,n)\leq f(t+1,k,\ell,t,n)=3(t+1)\spn{n-t-1}{k-t-1}$ provided $k\geq t+3$, and it equals $3^2\binom{t+2}{2}$ when $k=t+2$. Combining these with Lemma \ref{lemmaubprod}, we derive that
 $$|\mathcal{F}||\mathcal{G}|\leq9(k-t+1)^2\binom{t+2}{t}^2\spn{n-t-1}{k-t-1}.$$
 By (\ref{equlemmat+2small1}), we get $t\spn{n-t}{2}>t\spn{n-t-1}{2}>2^tt^2(t+1)^3(k-t+1)^4$. Then it is evident that $|\mathcal{F}||\mathcal{G}|<t\spn{n-t}{2}\spn{n-t-1}{k-t-1}<r(n,k,\ell,t)$.
\end{proof}
Let $\mathcal{F}$ and $\mathcal{G}$ be cross $t$-intersecting families. Denote by $\mathcal{T}(\mathcal{F})$ the set of $t$-covers of $\mathcal{F}$ with $\tau_t(\mathcal{F})$ blocks, and define $\mathcal{T}(\mathcal{G})$ in the same manner. We are in a position to consider the case of $\{\tau_t(\mathcal{F}),\tau_t(\mathcal{G})\}=\{t,t+1\}$.
\begin{lemma}\label{lemmanottintsmall}
Let $k,\ell\geq t+2$ and $n\geq2L(\max\{k,\ell\},t)$. Suppose that $\mathcal{F}\subseteq\spn{[n]}{k}$ and $\mathcal{G}\subseteq\spn{[n]}{\ell}$ are maximal  non-trivial cross $t$-intersecting families with $\tau_t(\mathcal{F})=t$ and $\tau_t(\mathcal{G})=t+1$. If  $\mathcal{T}(\mathcal{F})$ and $\mathcal{T}(\mathcal{G})$ are not cross $t$-intersecting as set systems, then $|\mathcal{F}||\mathcal{G}|<r(n,k,\ell,t)$.
\end{lemma}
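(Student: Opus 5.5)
The plan is to fix witnesses of the hypothesis and then bound $|\mathcal{F}|$ and $|\mathcal{G}|$ separately. Each bound will carry at most one "main" Stirling factor, and every other term will lose a factor $\spn{n-t}{\ell-t}/\spn{n-t-1}{\ell-t-1}$, which Lemma \ref{lemmaspn2lkt} (ii) makes huge.

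\textbf{Setup.} Since $\mathcal{T}(\mathcal{F})$ and $\mathcal{T}(\mathcal{G})$ are not cross $t$-intersecting, there are $X\in\mathcal{T}(\mathcal{F})$ with $|X|=t$ and $T\in\mathcal{T}(\mathcal{G})$ with $|T|=t+1$ such that $|X\cap T|\leq t-1$. Because $X$ is a $t$-cover of $\mathcal{F}$ with exactly $t$ blocks, $X\subseteq F$ for every $F\in\mathcal{F}$, so $\mathcal{F}=\mathcal{F}_X$.

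\textbf{Bound on $|\mathcal{F}|$.} All members of $\mathcal{G}$ are $t$-covers of $\mathcal{F}$ with $\ell$ blocks, and $\tau_t(\mathcal{G})=t+1$. Applying Lemma \ref{lemmakey} to $H=X$, together with Lemma \ref{lemmamono} (ii) to discard the second term of the maximum, gives
\[
|\mathcal{F}|\leq (\ell-t+1)\spn{n-t-1}{k-t-1}
\]
when $k\geq t+3$. When $k=t+2$ it gives the constant bound $(\ell-t+1)^{2}$, which is even smaller relative to the target.

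\textbf{Bound on $|\mathcal{G}|$: the part containing $X$.} Split $\mathcal{G}=\mathcal{G}_X\cup(\mathcal{G}\setminus\mathcal{G}_X)$. Every $G\in\mathcal{G}_X$ contains some $t$-subset $H\subseteq T$. Since $X\not\subseteq T$, the set $X\cup H$ has at least $t+1$ blocks, so by (\ref{equntkt})
\[
|\mathcal{G}_X|\leq (t+1)\spn{n-t-1}{\ell-t-1}.
\]

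\textbf{Bound on $|\mathcal{G}|$: the part avoiding $X$.} Fix $F_0\in\mathcal{F}$. A member $G\in\mathcal{G}\setminus\mathcal{G}_X$ meets $X$ in at most $t-1$ blocks but meets $F_0$ in at least $t$ blocks, so it contains a block $B\in F_0\setminus X$. It also contains a $t$-subset $H\subseteq T$. If $B\notin H$, then $G\supseteq H\cup\{B\}$, a $(t+1)$-partition, and there are at most $(t+1)(k-t)\spn{n-t-1}{\ell-t-1}$ such $G$.

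The problem case is when every such $B$ already lies in $H$, i.e.\ $G\cap F_0=H\subseteq F_0\cap T$. Here I would use non-triviality and the minimality $\tau_t(\mathcal{F})=t$. I would like $H\not\subseteq\cap\mathcal{F}$, since otherwise $\mathcal{F}$ would lie in $\mathcal{F}_{X\cup H}$ and the estimate for $|\mathcal{F}|$ improves by another Lemma \ref{lemmaspn2lkt} factor. This deduction is not yet justified: a $t$-subset $H\neq X$ contained in every member of $\mathcal{F}$ does not obviously contradict $\tau_t(\mathcal{F})=t$, and it is the step to verify first. Granting it, pick $F_1\in\mathcal{F}$ with $H\not\subseteq F_1$. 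Then Lemma \ref{lemmainductive}, applied with $S=H$ and the $t$-cover $F_1$ of $\mathcal{G}$, gives $|\mathcal{G}_H|\leq\binom{k}{t}\spn{n-t-1}{\ell-t-1}$. Summing over the $t+1$ choices of $H$ bounds $|\mathcal{G}\setminus\mathcal{G}_X|$ by a polynomial factor in $t$ and $k$ times $\spn{n-t-1}{\ell-t-1}$.

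\textbf{Conclusion.} Altogether,
\[
|\mathcal{F}||\mathcal{G}|\leq c(k,\ell,t)\spn{n-t-1}{k-t-1}\spn{n-t-1}{\ell-t-1},
\]
where $c$ is polynomial in $k,\ell,t$, of order at most $(t+1)^2(\max\{k,\ell\}-t+1)^2$ up to binomial factors. Lemma \ref{lemmaspn2lkt} (ii) turns this into $|\mathcal{F}||\mathcal{G}|<\spn{n-t-1}{k-t-1}\spn{n-t}{\ell-t}$, which is below $r(n,k,\ell,t)$ by (\ref{equlbfunr}). When $\ell=t+2$ the factor $\spn{n-t-1}{1}=1$ must instead be compared with $\spn{n-t}{2}$ using (\ref{equlemmat+2small1}), as in Lemma \ref{lemmat+2small}.

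\textbf{Main obstacle.} The main obstacle is the degenerate subcase $G\cap F_0\subseteq T$. If it cannot be excluded, the fallback is to choose $F_0$ more cleverly, or to exploit maximality of the pair. One could also reverse the roles and bound $|\mathcal{F}|$ by $\spn{n-t-1}{k-t-1}$ using that two distinct $t$-subsets of $T$ then force an extra block.
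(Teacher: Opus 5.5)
There is a genuine gap, exactly at the step you flagged, and it cannot be closed inside your strategy.

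\textbf{Where the argument breaks.} Your plan needs $|\mathcal{G}|\le c\,\spn{n-t-1}{\ell-t-1}$ with $c$ polynomial. This rests on the claim that no $H\in\binom{T}{t}$ lies in $\cap\mathcal{F}$, and that claim is false in general. Once it fails, maximality puts every $\ell$-partition containing $H$ into $\mathcal{G}$, so $|\mathcal{G}|$ can be of order $\spn{n-t}{\ell-t}$. Note that your argument never uses maximality at all.

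Here is a concrete instance. Let $k=t+2$, $F_0=[[t+1]]\cup\{[t+2,n]\}$, $\mathcal{F}=\mathcal{C}(t+2,t)=\{F_0\}$ and $\mathcal{G}=\mathcal{D}(\ell,t)$.
\begin{itemize}
\item The pair is maximal and non-trivial, with $\tau_t(\mathcal{F})=t$ and $\tau_t(\mathcal{G})=t+1$, and $T=[[t+1]]\in\mathcal{T}(\mathcal{G})$.
\item Since $\mathcal{F}$ is a single partition, every $t$-subset of $F_0$ lies in $\mathcal{T}(\mathcal{F})$. For example $X=[[t-1]]\cup\{[t+2,n]\}$ has $|X\cap T|=t-1$, so the hypotheses hold.
\item Every $H\in\binom{T}{t}$ lies in $\cap\mathcal{F}$, and $|\mathcal{G}|=(t+1)\spn{n-t}{\ell-t}-t\spn{n-t-1}{\ell-t-1}$.
\end{itemize}

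Your fallback does not rescue this case. If $|X\cup H|=t+1$, then $\mathcal{F}\subseteq\mathcal{F}_{X\cup H}$ only gives $|\mathcal{F}|\le\spn{n-t-1}{k-t-1}$. That is a gain of the factor $\ell-t+1$, not of a Lemma~\ref{lemmaspn2lkt} factor. You would then need $|\mathcal{G}|<c'\spn{n-t}{\ell-t}$ with $c'$ below roughly $\max\{\ell-t,t+1\}$ (compare (\ref{equlbfunr})), and nothing in your argument provides that. For $k=t+2$, your constant bound $(\ell-t+1)^2$ on $|\mathcal{F}|$ is far too weak once $|\mathcal{G}|\approx(t+1)\spn{n-t}{\ell-t}$.

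\textbf{The statement itself needs an exception.} In this example $|\mathcal{F}||\mathcal{G}|=r_2(n,t+2,\ell,t)$. For $k=\ell=t+2$ and $t\ge 2$, Lemma~\ref{lemmar1r2}(ii) gives $r_2>r_1$. So the product equals $r(n,k,\ell,t)$ rather than being strictly smaller, and no proof can close this subcase.

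\textbf{How the paper's proof differs, and where it also slips.} The paper takes the opposite route to yours: it never tries to make $\mathcal{G}$ small, and instead uses maximality twice.
\begin{itemize}
\item All $\ell$-partitions containing $X$ lie in $\mathcal{G}$ and must be $t$-covered by $T$. Hence $X$ is not all singletons, which yields $|\mathcal{F}|\le(\ell-t)\spn{n-t-2}{k-t-1}$.
\item Symmetrically, all $k$-partitions containing $T$ lie in $\mathcal{F}$. The paper concludes that $T$ is not all singletons, giving $|\mathcal{G}|<\frac{\ell}{\ell-t}\spn{n-t}{\ell-t}$.
\item For $k\ge t+3$ the product is then below $\frac{\ell}{2}\spn{n-t-1}{k-t-1}\spn{n-t}{\ell-t}<r(n,k,\ell,t)$.
\end{itemize}
The second step is sound for $k\ge t+3$: many $k$-partitions contain $T$, and one of them avoids $X\setminus T$. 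It fails for $k=t+2$. There the only such partition is $T\cup\{\overline{\cup T}\}$, and $\overline{\cup T}$ may be a block of $X$. This is precisely the example above, so the paper's ``similarly, $|\cup T|\ge t+2$'' is wrong in that corner.

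\textbf{The repair.}
\begin{itemize}
\item For $k\ge t+3$, replace your attempt to shrink $\mathcal{G}$ with the maximality argument, which moves the saving onto $\mathcal{F}$.
\item For $k=t+2$, the lemma must exclude (at least) $(\mathcal{F},\mathcal{G})\cong(\mathcal{C}(t+2,t),\mathcal{D}(\ell,t))$. This is harmless for Theorem~\ref{thm2}, since that pair is on its list.
\item A smaller point: the constant $\binom{k}{t}$ in your per-$H$ bound from Lemma~\ref{lemmainductive} is exponential in $t$, so Lemma~\ref{lemmaspn2lkt}(ii) cannot absorb it. Lemma~\ref{lemmamono'} replaces it by $k-t+1$.
\end{itemize}
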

\begin{proof}
By assumption, there are $X\in\mathcal{T}(\mathcal{F})$ and $T\in\mathcal{T}(\mathcal{G})$ with $|X\cap T|<t$. First, we claim that $|\cup X|\geq t+1$. To the contrary, assume that $X$ consists of singletons. Then from Lemma \ref{lemmaspn2lkt} (ii), the number of partitions $G\in\spn{[n]}{\ell}$ satisfying $X\subseteq G$ and $G\cap(T\setminus X)\neq\emptyset$ is at most 
$\sum_{B}\spn{n-|\cup X|-| B|}{\ell-t-1}\leq(t+1)\spn{n-t-1}{\ell-t-1}<\spn{n-t}{\ell-t}$, 
where $B$ ranges over $T\setminus X$. Hence there exists $G\in\spn{[n]}{\ell}$ with $X\subseteq G$ and $G\cap T=X\cap T$. Since $\mathcal{F}$ and $\mathcal{G}$ are maximal, every $\ell$-partition of $[n]$ containing $X$ belongs to $\mathcal{G}$. This leads to $|X\cap T|=|G\cap T|\geq t$ as $T$ is a $t$-cover of $\mathcal{G}$, which is a contradiction. Thus the claim is true. Similarly, we have $|\cup T|\geq t+2$.

Since $\mathcal{F}$ and $\mathcal{G}$ are non-trivial and $X\subseteq\cap\mathcal{F}$, there exists $G\in\mathcal{G}$ with $X\nsubseteq G$. Note that at least one block of $G\setminus X$ intersects $\cup(X\setminus G)$. Set $r=|X\cap G|$. Every partition in  $\mathcal{F}$ contains at least $t-r$ blocks of $G\setminus X$, and each of those blocks is disjoint from $\cup X$. Using $|\cup X|\geq t+1$, and  applying Lemma \ref{lemmamono'} with $i=1$, we obtain
\begin{equation}\label{equlemmanottintsmall1}
|\mathcal{F}|\leq\binom{\ell-r-1}{t-r}\spn{n-(t+1)-(t-r)}{k-t-(t-r)}\leq(\ell-t)\spn{n-t-2}{k-t-1}.
\end{equation}

Note that $T$ is a $t$-cover of $\mathcal{G}$, and at least one block in it has size larger than one, then we derive $$|\mathcal{G}|\leq\sum_{B\in T}\spn{n-|\cup(T\setminus\{B\})|}{\ell-t}\leq t\spn{n-t-1}{\ell-t}+\spn{n-t}{\ell-t}<\frac{\ell}{\ell-t}\spn{n-t}{\ell-t},$$ where the last inequality follows from (\ref{equnkn-1k}). If $k\geq t+3$, then by (\ref{equnkn-1k}) and (\ref{equlbfunr}),
$$|\mathcal{F}||\mathcal{G}|<\ell\spn{n-t-2}{k-t-1}\spn{n-t}{\ell-t}<\frac{\ell}{2}\spn{n-t-1}{k-t-1}\spn{n-t}{\ell-t}<r(n,k,\ell,t).$$
Suppose $k=t+2$. Now (\ref{equlemmanottintsmall1}) reduces to $|\mathcal{F}|\leq\ell-t$. When $|\mathcal{F}|=1$, we get $|\mathcal{F}||\mathcal{G}|<\frac{\ell}{\ell-t}\spn{n-t}{\ell-t}<r(n,k,\ell,t)$. It remains only to consider the case that  $|\mathcal{F}|\geq2$. Pick $F_1\neq F_2\in\mathcal{F}$. By the same token as in  Case 1 of the proof of Lemma \ref{lemmat+2small}, we obtain that $|G\cap X|=t-1$ whenever  $G\in\mathcal{G}\setminus\mathcal{G}_X$, and there are $B_i\in F_i\setminus X$($i=1,2$) such that $\{B_1,B_2\}\subseteq G$ for all $G\in\mathcal{G}\setminus\mathcal{G}_X$. Hence $|\mathcal{G}\setminus\mathcal{G}_X|\leq t\spn{n-t-1}{\ell-t-1}$, and finally this together with (\ref{equnkn-1k}) and Lemma \ref{lemmaspn2lkt} (ii) gives 
\begin{align*}
|\mathcal{F}||\mathcal{G}|&<(\ell-t)\left(\spn{n-|\cup X|}{\ell-t}+t\spn{n-t-1}{\ell-t-1}\right)\\
&<(\ell-t)\left(\frac{1}{\ell-t}+\frac{1}{(t+1)(\ell-t+1)^2}\right)\spn{n-t}{\ell-t}<r(n,k,\ell,t),
\end{align*}
as desired.
\end{proof}
\begin{lemma}\label{lemmathm2}
Let $k,\ell\geq t+2$ and $n\geq2L(\max\{k,\ell\},t)$. Let $\mathcal{F}\subseteq\spn{[n]}{k}$ and $\mathcal{G}\subseteq\spn{[n]}{\ell}$ be maximal non-trivial $t$-intersecting with $\tau_t(\mathcal{F})=t$ and $\tau_t(\mathcal{G})=t+1$, and let $\mathcal{T}(\mathcal{F})$ and $\mathcal{T}(\mathcal{G})$ be cross $t$-intersecting as set systems. If $|\mathcal{F}||\mathcal{G}|\geq\max\{ r(n,k,\ell,t),r(n,\ell,k,t)\}$, then one of the following hold. 
\begin{itemize}
	\item[\rm(i)]$(\mathcal{F},\mathcal{G})\cong(\mathcal{A}(k,\ell,t),\mathcal{B}(\ell,t))$.
	\item[\rm(ii)]$(\mathcal{F},\mathcal{G})\cong(\mathcal{C}(k,t),\mathcal{D}(\ell,t))$.
	\item[\rm(iii)]$(k,\ell)=(3,3)$, and $\mathcal{F}=\mathcal{A}(3,\{A\},M)$ and $\mathcal{G}=\mathcal{B}(3,\{A\},M)$, where $A$ is a singleton and $M$ is a $3$-partition containing $A$ with $3<|\cup M|<n$.
\end{itemize}
\end{lemma}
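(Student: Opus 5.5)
We fix $X\in\mathcal{T}(\mathcal{F})$ (a $t$-partition contained in every member of $\mathcal{F}$) and study $\mathcal{T}(\mathcal{G})$, the family of $(t+1)$-block $t$-covers of $\mathcal{G}$. By hypothesis every $T\in\mathcal{T}(\mathcal{G})$ satisfies $|T\cap X|\geq t$, so $X\subseteq T$ and $T=X\cup\{B_T\}$ for a single extra block $B_T$. The plan is to split according to whether $\mathcal{T}(\mathcal{G})$ has one member or several. Along the way we use maximality repeatedly: $\mathcal{F}$ is the family of all $k$-partitions that $t$-intersect every member of $\mathcal{G}$, and symmetrically for $\mathcal{G}$.

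\emph{Case $|\mathcal{T}(\mathcal{G})|=1$, say $\mathcal{T}(\mathcal{G})=\{T\}$ with $T=X\cup\{B\}$.} Maximality gives $\mathcal{F}\supseteq\{F:T\subseteq F\}=\mathcal{C}(k,T)$, and also $\mathcal{F}\subseteq\{F:X\subseteq F\}$. Every $G\in\mathcal{G}$ has $|G\cap T|\geq t$. Non-triviality forces some $G\in\mathcal{G}$ with $X\not\subseteq G$, and then $B\in G$ and $|G\cap X|=t-1$. Any $F\in\mathcal{F}$ must meet such a $G$ in $t$ blocks, so $F$ contains $B$ or another block of $G\setminus X$. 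The aim is to show that, unless $\mathcal{F}=\mathcal{C}(k,T)$, the product is small. If some $F\in\mathcal{F}$ omits $B$, we count $\mathcal{F}$ and $\mathcal{G}$ as in the proof of Lemma \ref{lemmanottintsmall}. We expect this to give $|\mathcal{F}|$ at most about $\spn{n-|\cup T|}{k-t-1}+(\ell-t)\spn{n-t-2}{k-t-1}$, while $|\mathcal{G}|\leq(t+1)\spn{n-t}{\ell-t}$. Comparing with (\ref{equlbfunr}) via Lemma \ref{lemmaspn2lkt} should then give a contradiction. Once $\mathcal{F}=\mathcal{C}(k,T)$, maximality gives $\mathcal{G}=\mathcal{D}(\ell,T)$. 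It remains to show that $T$ consists of singletons. Otherwise (\ref{equntkt}) gives $|\mathcal{C}(k,T)|\leq\spn{n-t-2}{k-t-1}$, and $|\mathcal{D}(\ell,T)|$ is similarly reduced, so the product drops below $r_2$. This yields (ii).

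\emph{Case $|\mathcal{T}(\mathcal{G})|\geq2$.} Let $M=X\cup\{B_T:T\in\mathcal{T}(\mathcal{G})\}$. Mimicking the claim in Proposition \ref{propell=t+1}, we show that $M$ is a partition, that every $G\in\mathcal{G}\setminus\mathcal{G}_X$ contains $M\setminus\{A\}$ for some $A\in X$, and hence that $G=(M\setminus\{A\})\cup\{\text{rest}\}$ with $|M|=\ell$ when $G$ is forced. The disjointness argument is the same as in that proof: two distinct extra blocks of a non-$X$ member must lie in a common complement. Next, $\mathcal{F}\subseteq\{F:X\subseteq F,\ F\cap(M\setminus X)\neq\emptyset\}$, and maximality makes this an equality, namely $\mathcal{A}(k,X,M)$. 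Then $\mathcal{G}=\mathcal{B}(\ell,X,M)$, and in particular $|M|=\ell$. Finally we compare sizes to force $M=[[\ell]]$ up to isomorphism. If some block of $M$ is not a singleton, both $|\mathcal{A}|$ and the $\{F:X\subseteq F\}$ part of $\mathcal{B}$ drop by a factor via (\ref{equntkt}) and (\ref{equnkn-1k}), except in small cases. The exceptional survivor $(k,\ell)=(3,3)$, $t=1$, where the gain is too small, is case (iii). In that case one checks that only $X$ needs to be a singleton while $|\cup M|$ can vary.

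\emph{Main obstacle.} The hardest step is the first case: proving that $\mathcal{F}$ equals $\mathcal{C}(k,T)$ rather than something larger, with a product bound sharp enough to beat $\max\{r(n,k,\ell,t),r(n,\ell,k,t)\}$. This is delicate when $k=t+2$ or $\ell=t+2$, where the lower-order terms are comparable to the main terms. We would first try the explicit counting of Lemma \ref{lemmanottintsmall} with (\ref{equlemmat+2small1}), and treat $k=t+2$ separately as done there.
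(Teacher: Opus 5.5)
Your case split and your handling of the branch $\mathcal{F}=\mathcal{C}(k,T)$ are essentially right, and so is the inclusion $\mathcal{G}\setminus\mathcal{G}_X\subseteq\cup_{A\in X}\mathcal{G}_{M\setminus\{A\}}$. The paper splits on $|\mathcal{T}(\mathcal{F})|$ and then on $m:=|M|$ versus $\ell$, which fits with your split. However, two steps fail.

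\textbf{The case $|\mathcal{T}(\mathcal{G})|\ge2$.} You conclude that maximality gives $\mathcal{F}=\mathcal{A}(k,X,M)$, $\mathcal{G}=\mathcal{B}(\ell,X,M)$ and ``in particular $|M|=\ell$''. But $M\setminus\{A\}\subseteq G$ determines $G$ only when $|M|=\ell$, and $\mathcal{B}(\ell,X,M)$ presupposes $|M|=\ell$. Otherwise $G$ has $\ell-m+1\ge2$ further free blocks. For example, take $t=1$, $k\ge3$, $\ell\ge4$ and
\[
\mathcal{F}=\left\{F\in\spn{[n]}{k}:\{1\}\in F,\ F\cap\{\{2\},\{3\}\}\neq\emptyset\right\},\qquad \mathcal{G}=\left\{G\in\spn{[n]}{\ell}:\{1\}\in G\ \text{or}\ \{2\},\{3\}\in G\right\}.
\]
This is a maximal non-trivial pair satisfying all structural hypotheses, with $\mathcal{T}(\mathcal{F})=\{\{\{1\}\}\}$, $\mathcal{T}(\mathcal{G})=\{\{\{1\},\{2\}\},\{\{1\},\{3\}\}\}$ and $|M|=3<\ell$. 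Such pairs must be excluded by size, as the paper does for $t+2\le m\le\ell-1$:
\begin{itemize}
\item Members of $\mathcal{F}$ containing no block of $M\setminus X$ contain $X\cup\{C\}$ for one of at most $\ell-m$ blocks $C$ of a fixed $G_0\in\mathcal{G}\setminus\mathcal{G}_X$.
\item Since $X\cup\{C\}\notin\mathcal{T}(\mathcal{G})$, Lemma \ref{lemmainductive} gives $|\mathcal{F}|\le(m-t)\spn{n-t-1}{k-t-1}+(\ell-m)(\ell-t+1)\spn{n-t-2}{k-t-2}$.
\item Together with $|\mathcal{G}\setminus\mathcal{G}_X|\le t\spn{n-t-1}{\ell-t-1}$ from (\ref{equlemmathm23}), the product is roughly $(\ell-t-1)\spn{n-t-1}{k-t-1}\spn{n-t}{\ell-t}$, which is below (\ref{equlbfunr}).
\end{itemize}
Relatedly, in the genuine case $|M|=\ell$, your ``drop by a factor'' fails for every $k=t+2$, not just $(3,3)$. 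If $X$ consists of singletons, then $|\mathcal{F}|=\ell-t$ and $|\mathcal{G}|=\spn{n-t}{\ell-t}+t$ whatever the other blocks of $M$ are. So the product is exactly $r_1(n,t+2,\ell,t)$. Excluding these pairs for $(k,\ell)\neq(3,3)$ needs Lemma \ref{lemmar1r2}(ii) for $\ell=t+2$ and Lemma \ref{lemmar1kllk} for $\ell>t+2$, that is, the term $r(n,\ell,k,t)$ in the hypothesis.

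\textbf{The subcase $\mathcal{T}(\mathcal{G})=\{T\}$, $T=X\cup\{B\}$, with some $F\in\mathcal{F}$ omitting $B$.} Your estimates cannot close this case. Since $\mathcal{C}(k,T)\subseteq\mathcal{F}$, when $T$ consists of singletons you have $|\mathcal{F}|\ge\spn{n-t-1}{k-t-1}$. So with $|\mathcal{G}|\le(t+1)\spn{n-t}{\ell-t}$ you can never get below $(t+1)\spn{n-t-1}{k-t-1}\spn{n-t}{\ell-t}$. That exceeds $r_2(n,k,\ell,t)$, which is the relevant maximum e.g.\ for $k=\ell\le2t$ by Lemma \ref{lemmar1r2}(ii). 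Your term $(\ell-t)\spn{n-t-2}{k-t-1}$ also borrows $|\cup X|\ge t+1$ from Lemma \ref{lemmanottintsmall}, which is not available here.

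The missing observation is that in this subcase $\mathcal{T}(\mathcal{F})=\{X\}$. Two distinct members of $\mathcal{T}(\mathcal{F})$ would lie in every member of $\mathcal{F}$ and have union $T$, forcing $B$ into every $F$. Now fix $F_0\in\mathcal{F}$. Every $G\in\mathcal{G}\setminus\mathcal{G}_X$ contains some $(X\setminus\{A\})\cup\{C\}$ with $A\in X$ and $C\in F_0\setminus X$, and this is not a $t$-cover of $\mathcal{F}$. Lemma \ref{lemmainductive} then gives
\[
|\mathcal{G}\setminus\mathcal{G}_X|\le t(k-t)(k-t+1)\spn{n-t-1}{\ell-t-1},
\]
so $|\mathcal{G}|<(1+\frac{1}{t+1})\spn{n-t}{\ell-t}$ by Lemma \ref{lemmaspn2lkt}(ii), not $(t+1)\spn{n-t}{\ell-t}$. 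Dually, $X\cup\{C\}$ with $C\neq B$ is not a $t$-cover of $\mathcal{G}$, which yields (\ref{equlemmathm2sizef}) with $m=t+1$. The product is then below $(1+\frac{1}{(t+1)^2})(1+\frac{1}{t+1})\spn{n-t-1}{k-t-1}\spn{n-t}{\ell-t}$, which is at most the right-hand side of (\ref{equlbfunr}).
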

\begin{proof}Let us prove that, if the pair  $(\mathcal{F},\mathcal{G})$ conforms to none of the structures listed in (i), (ii) and (iii), then $|\mathcal{F}||\mathcal{G}|<\max\{ r(n,k,\ell,t),r(n,\ell,k,t)\}$. Note that  $X\subseteq T$ for all $X\in\mathcal{T}(\mathcal{F})$ and $T\in\mathcal{T}(\mathcal{G})$. 
	
First, if $|\mathcal{T}(\mathcal{F})|\geq2$, then $\mathcal{T}(\mathcal{G})=\{\cup(\mathcal{T}(\mathcal{F}))\}$. Write $T=\cup(\mathcal{T}(\mathcal{F}))$. By the maximality, we have  $\mathcal{F}=\mathcal{C}(k,T)$ and $\mathcal{G}=\mathcal{D}(\ell,T)$. If $|\cup T|=t+1$, then $(\mathcal{F},\mathcal{G})\cong(\mathcal{C}(k,t),\mathcal{D}(\ell,t))$. When $|\cup T|\geq t+2$ , we have from (\ref{equnkn-1k}) that $|\mathcal{G}|\leq t\spn{n-t-1}{\ell-t}+\spn{n-t}{\ell-t}<\frac{\ell}{\ell-t}\spn{n-t}{\ell-t}$, and hence (\ref{equlbfunr}) yields  $|\mathcal{F}||\mathcal{G}|<\frac{\ell}{\ell-t}\spn{n-t-1}{k-t-1}\spn{n-t}{\ell-t}<r(n,k,\ell,t).$

In what follows, we suppose $|\mathcal{T}(\mathcal{F})|=1$. Set $\mathcal{T}(\mathcal{F})=\{X\}$, and set $T=X\cup\{B_T\}$ for each  $T\in\mathcal{T}(\mathcal{G})$. Then $M:=\cup(\mathcal{T}(\mathcal{G}))=X\cup\{B_T:T\in\mathcal{T}(\mathcal{G})\}$. Since $\mathcal{F}$ and $\mathcal{G}$ are non-trivial and $X\subseteq\cap\mathcal{F}$, we have $\mathcal{G}\setminus\mathcal{G}_X\neq\emptyset$. Let  $G\in\mathcal{G}\setminus\mathcal{G}_X$. It follows that $|G\cap X|=t-1$ and $B_T\in G$ whenever $T\in\mathcal{T}(\mathcal{G})$. Then  there exists $B\in X$ with $G\cap M=M\setminus\{B\}$, and therefore,
\begin{equation}\label{equlemmathm23}
\mathcal{G}\setminus\mathcal{G}_X\subseteq\cup_{B\in X}\mathcal{G}_{M\setminus\{B\}}.
\end{equation}
In particular, we obtain that $M$ is a partition, and $M\setminus\{B\}\subsetneqq G$, which implies that $m:=|M|\leq\ell$.
 
We proceed by dealing with the case when  $m=\ell$. Now 
\begin{equation}\label{equlemmathm21}
\mathcal{G}\setminus\mathcal{G}_X\subseteq\{(M\setminus\{B\})\cup\{B\cup\overline{\cup M}\}:B\in X\},	
\end{equation} and hence $\mathcal{G}\subseteq\mathcal{B}(\ell,X,M)$. Since  $\mathcal{G}\setminus\mathcal{G}_X$ is non-empty, every member of $\mathcal{F}$ intersects $M\setminus X$, and so $\mathcal{F}\subseteq\mathcal{A}(k,X,M)$. It follows from the maximality that  $$\mathcal{F}=\mathcal{A}(k,X,M)\;\mbox{and}\;\mathcal{G}=\mathcal{B}(\ell,X,M).$$
If $|\cup M|=\ell$, namely, $M$ consists of singletons, then $(\mathcal{F},\mathcal{G})\cong(\mathcal{A}(k,\ell,t),\mathcal{B}(\ell,t))$. Suppose $|\cup M|\geq\ell+1$. Note that
 $|\mathcal{G}|=\spn{n-|\cup X|}{\ell-t}+t\leq|\mathcal{B}(\ell,t)|$, with equality precisely if $|\cup X|=t$. When $k\geq t+3$, we have 
\begin{align*}
|\mathcal{F}|&<\sum_{B\in M\setminus X}\spn{n-|(\cup X)\cup B|}{k-t-1}\leq(\ell-t-1)\spn{n-t-2}{k-t-1}+\spn{n-t-1}{k-t-1}.
\end{align*}
Combining this with Lemma \ref{lemmain-ex} (ii), (\ref{equnkn-1k})  and Lemma \ref{lemmaspn2lkt} (ii), we obtain that
\begin{align*}
|\mathcal{A}(k,\ell,t)|-|\mathcal{F}|&>(\ell-t-1)\left(\spn{n-t-1}{k-t-1}-\spn{n-t-2}{k-t-1}\right)-\binom{\ell-t}{2}\spn{n-t-2}{k-t-2}\\
&>\frac{\ell-t-1}{2}\spn{n-t-1}{k-t-1}-\binom{\ell-t}{2}\spn{n-t-2}{k-t-2}>0.
\end{align*}
Recall from (\ref{equsizeabcd}) that $r_1(n,k,\ell,t)$ stands for $|\mathcal{A}(k,\ell,t)||\mathcal{B}(\ell,t)|$. Thus  $|\mathcal{F}||\mathcal{G}|<r_1(n,k,\ell,t)\leq r(n,k,\ell,t)$. When $k=t+2$, we have $|\mathcal{F}|=\ell-t=|\mathcal{A}(k,\ell,t)|$. It follows that $|\mathcal{F}||\mathcal{G}|<r_1(n,k,\ell,t)\leq r(n,k,\ell,t)$ for $|\cup X|\geq t+1$. In the case of $k=t+2$ and $|\cup X|=t$, the product $|\mathcal{F}||\mathcal{G}|$ equals $r_1(n,t+2,\ell,t)$. In particular, for $(k,\ell)=(3,3)$, we have $r_1(n,3,3,1)>r_2(n,3,3,1)$, and $\mathcal{F}$ and $\mathcal{G}$ conform to the structure in (iii) by setting $\{A\}=X$. To deduce $|\cup M|<n$, just using (\ref{equlemmathm21}) and the fact that  $\mathcal{G}\setminus\mathcal{G}_X\neq\emptyset$. For $(k,\ell)\neq(3,3)$, we obtain from Lemma \ref{lemmar1r2} (ii) that $|\mathcal{F}||\mathcal{G}|<r_2(n,t+2,t+2,t)$ for $\ell=t+2\geq4$, and from Lemma \ref{lemmar1kllk} that $|\mathcal{F}||\mathcal{G}|<r_1(n,\ell,t+2,t)$ for $\ell>t+2$, where the restriction $n\geq(t+1)+(k-t)(\ell-t)$ is guaranteed as $k=t+2$ and $n\geq2L(\ell,t)>2\ell-t$.

It remains to prove  $|\mathcal{F}||\mathcal{G}|<r(n,k,\ell,t)$ provided $m\leq\ell-1$. Fix a $G_0\in\mathcal{G}\setminus\mathcal{G}_X$, and set $G_0\cap M=M\setminus\{B\}$ for $B\in X$. Then $\mathcal{F}\setminus(\cup_{T\in\mathcal{T}(\mathcal{G})}\mathcal{F}_{T})=\cup_{C\in H}\mathcal{F}_{X\cup\{C\}}$, where $H$ denotes the set of blocks of $G_0\setminus M$ which are disjoint from $B$. Since $B$ does not lie in $G_0$, at least one block of $G_0$ intersects $B$, which implies that $|H|\leq|G_0\setminus M|-1=\ell-m$. Let $C\in H$, then $X\cup\{C\}\notin\mathcal{T}(\mathcal{G})$ as $H\cap M=\emptyset$. Hence there exists $G\in\mathcal{G}\setminus\mathcal{G}_X$ with $|(X\cup\{C\})\cap G|=t-1$, and so Lemma \ref{lemmainductive} yields $|\mathcal{F}_{X\cup\{C\}}|\leq(\ell-t+1)\spn{n-t-2}{k-t-2}$. Therefore, we obtain $$|\mathcal{F}|\leq(m-t)\spn{n-t-1}{k-t-1}+(\ell-m)(\ell-t+1)\spn{n-t-2}{k-t-2}.$$
A straightforward comparison using Lemma \ref{lemmaspn2lkt} (ii) shows that the right-hand side is maximized when $m=\ell-1$, and hence
\begin{equation}\label{equlemmathm2sizef}
|\mathcal{F}|\leq\left\{\begin{array}{ll}(\ell-t-1)\spn{n-t-1}{k-t-1}+(\ell-t+1)\spn{n-t-2}{k-t-2},&{\rm if}\;m\geq t+2,\\\spn{n-t-1}{k-t-1}+(\ell-t-1)(\ell-t+1)\spn{n-t-2}{k-t-2},&{\rm if}\;m=t+1.\\\end{array}\right.
\end{equation}
Next, we have to bound the size of  $\mathcal{G}\setminus\mathcal{G}_X$. Fix an $F_0\in\mathcal{F}$. By the same token we have 
$\mathcal{G}\setminus\mathcal{G}_X=\cup_{B,C}\mathcal{G}_{(X\setminus\{B\})\cup\{C\}}$, where $B$ and $C$  range over $X$ and $F_0\setminus X$, respectively. In addition, $(X\setminus\{B\})\cup\{C\}$ does not lie in $\mathcal{T}(\mathcal{F})$ for every choice of $B$ and $C$. By combining these with Lemma \ref{lemmainductive}, we obtain that $|\mathcal{G}\setminus\mathcal{G}_X|\leq t(k-t)(k-t+1)\spn{n-t-1}{\ell-t-1}$. Observe that when $m\geq t+2$, a better bound given by (\ref{equlemmathm23}) is $|\mathcal{G}\setminus\mathcal{G}_X|\leq\cup_{B\in X}|\mathcal{G}_{M\setminus\{B\}}|\leq t\spn{n-t-1}{\ell-t-1}$. Finally, we deduce from (\ref{equlemmathm2sizef}) and Lemma \ref{lemmaspn2lkt} (ii) that 
$$\frac{|\mathcal{F}||\mathcal{G}|}{\spn{n-t-1}{k-t-1}\spn{n-t}{\ell-t}}<
\left\{\begin{array}{ll}\left(\ell-t-1+\frac{1}{(t+1)^2(\ell-t+1)}\right)\left(1+\frac{1}{(t+1)(\ell-t+1)^2}\right),&{\rm if}\;m\geq t+2,\\
	\left(1+\frac{1}{(t+1)^2}\right)\left(1+\frac{1}{t+1}\right),&{\rm if}\;m=t+1.\\
\end{array}\right.$$
Then it is routine to check that  $|\mathcal{F}||\mathcal{G}|\spn{n-t-1}{k-t-1}^{-1}\spn{n-t}{\ell-t}^{-1}$ is less then $\ell-t-\frac{1}{2(t+1)^2}$, and thus $|\mathcal{F}||\mathcal{G}|<r(n,k,\ell,t)$. This completes the proof.
\end{proof}
\noindent{\bf Proof of Theorem \ref{thm2}.}\;This follows from Lemmas \ref{lemmat+2small}, \ref{lemmanottintsmall} and  \ref{lemmathm2}.{\hfill$\square$}

\noindent{\bf Proof of Theorem  \ref{thmwholargest}.}\;Let us recall the fact (\ref{equsizeabcd}) that $r_1(n,k,\ell,t)$ and $r_2(n,k,\ell,t)$ are given by the quantities $|\mathcal{A}(k,\ell,t)||\mathcal{B}(\ell,t)|$ and $|\mathcal{C}(k,t)||\mathcal{D}(\ell,t)|$, respectively. By Theorem \ref{thm2}, the proof reduces to determining the largest among $r_1(n,k,\ell,t)$, $r_1(n,\ell,k,t)$, $r_2(n,k,\ell,t)$ and $r_2(n,\ell,k,t)$. It is finished by Lemmas \ref{lemmar2kllk}, \ref{lemmar1r2} and \ref{lemmar1kllk}. {\hfill$\square$}
\begin{proposition}\label{propkl=33}
Let $n\geq2L(3,1)$ and   $\mathcal{F},\mathcal{G}\subseteq\spn{[n]}{3}$ be non-trivial cross $1$-intersecting families maximizing $|\mathcal{F}||\mathcal{G}|$. If   $|\mathcal{F}|\leq|\mathcal{G}|$, then  $\mathcal{F}=\mathcal{A}(3,\{A\},M)$ and $\mathcal{G}=\mathcal{B}(3,\{A\},M)$ for a singleton $A$ and a $3$-partition $M$ containing $A$ with $|\cup M|<n$.
\end{proposition}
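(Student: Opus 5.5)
We reduce to the lemmas of this section with $k=\ell=3$ and $t=1$. Since $(\mathcal{F},\mathcal{G})$ maximizes the product among non-trivial cross $1$-intersecting pairs, both families are maximal: enlarging either family keeps the pair cross $1$-intersecting, and it also keeps the pair non-trivial, because $\cap\mathcal{F}_i$ can only shrink. Moreover, $|\mathcal{F}||\mathcal{G}|\geq r(n,3,3,1)$, since $(\mathcal{A}(3,3,1),\mathcal{B}(3,1))$ and $(\mathcal{C}(3,1),\mathcal{D}(3,1))$ are non-trivial pairs and so are admissible competitors. The hypothesis $n\geq2L(3,1)=2L(\max\{k,\ell\},t)$ is exactly what Lemmas \ref{lemmat+2small}, \ref{lemmanottintsmall} and \ref{lemmathm2} require.

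By Lemma \ref{lemmat+2small}, $\{\tau_1(\mathcal{F}),\tau_1(\mathcal{G})\}=\{1,2\}$. By the symmetry $k=\ell$ we may first treat the case $\tau_1(\mathcal{F})=1$ and $\tau_1(\mathcal{G})=2$ and then swap roles. Lemma \ref{lemmanottintsmall} shows that $\mathcal{T}(\mathcal{F})$ and $\mathcal{T}(\mathcal{G})$ are cross $1$-intersecting. Lemma \ref{lemmathm2} then gives three possibilities:
\begin{itemize}
\item[(a)] $(\mathcal{F},\mathcal{G})\cong(\mathcal{A}(3,3,1),\mathcal{B}(3,1))$;
\item[(b)] $(\mathcal{F},\mathcal{G})\cong(\mathcal{C}(3,1),\mathcal{D}(3,1))$;
\item[(c)] $\mathcal{F}=\mathcal{A}(3,\{A\},M)$ and $\mathcal{G}=\mathcal{B}(3,\{A\},M)$ with $A$ a singleton and $3<|\cup M|<n$.
\end{itemize}
Case (a) is the special case $|\cup M|=3$ of the claimed form, and it satisfies $|\cup M|=3<n$. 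The proof of Lemma \ref{lemmathm2} already records $r_1(n,3,3,1)>r_2(n,3,3,1)$. Case (c) attains the same product: there $|\mathcal{F}|=\ell-t=2$ and $|\mathcal{G}|=\spn{n-1}{2}+1$, which equals $r_1(n,3,3,1)$. Hence (b) is not a maximizer, and every maximizer has the shape of (a) or (c), i.e.\ $\mathcal{F}'=\mathcal{A}(3,\{A\},M)$ and $\mathcal{G}'=\mathcal{B}(3,\{A\},M)$ for some labelling $(\mathcal{F}',\mathcal{G}')$ of the pair, with $|\cup M|<n$.

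It remains to determine the labelling. In this shape $|\mathcal{A}(3,\{A\},M)|=2$ and $|\mathcal{B}(3,\{A\},M)|=\spn{n-1}{2}+1$. Explicitly, $\mathcal{A}$ consists of the $3$-partitions containing $A$ and one of the two blocks $B\in M\setminus\{A\}$, and each such partition is forced to be $\{A,B,\overline{A\cup B}\}$. The $\mathcal{B}$ side is larger because $n\geq 2L(3,1)$. So in the case $\tau_1(\mathcal{F})=1$ we get $|\mathcal{F}|<|\mathcal{G}|$, consistent with the hypothesis. In the swapped case $\tau_1(\mathcal{F})=2$ and $\tau_1(\mathcal{G})=1$, we would have $\mathcal{G}=\mathcal{A}$ and $\mathcal{F}=\mathcal{B}$, so $|\mathcal{F}|>|\mathcal{G}|$, contradicting $|\mathcal{F}|\leq|\mathcal{G}|$. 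Hence $\mathcal{F}=\mathcal{A}(3,\{A\},M)$ and $\mathcal{G}=\mathcal{B}(3,\{A\},M)$, as claimed.

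The main obstacle is the bookkeeping with Lemma \ref{lemmathm2}, which is stated as a trichotomy under the hypothesis $|\mathcal{F}||\mathcal{G}|\geq\max\{r(n,k,\ell,t),r(n,\ell,k,t)\}$, rather than as a description of maximizers. I will check that its case-(iii) derivation covers every $|\cup M|>3$ with equality $r_1(n,3,3,1)$. I will also check that no other candidate with $|\cup M|\geq4$ beats $r_1$ when $k=t+2$; this is the step where $|\cup X|=t$, i.e.\ $A$ being a singleton, is forced. Both checks use the explicit computation in that proof.
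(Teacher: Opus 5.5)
Your proof is correct and follows the paper's argument. You combine Lemmas \ref{lemmat+2small}, \ref{lemmanottintsmall} and \ref{lemmathm2} to reduce to the $\mathcal{A}/\mathcal{B}$ and $\mathcal{C}/\mathcal{D}$ structures, and then discard the latter via $r_2(n,3,3,1)=2\spn{n-1}{2}-1<2\left(\spn{n-1}{2}+1\right)=r_1(n,3,3,1)$. Your checks that a maximizer is maximal and that $|\mathcal{F}|\leq|\mathcal{G}|$ fixes the labelling fill in steps the paper leaves implicit; the further verifications you defer in your last paragraph are not needed, since Lemma \ref{lemmathm2} as stated already covers every pair with product at least $r(n,3,3,1)$.
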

\begin{proof}
From Lemmas \ref{lemmat+2small}, \ref{lemmanottintsmall} and \ref{lemmathm2}, we obtain that either $\mathcal{F}=\mathcal{A}(3,\{A\},M)$ and $\mathcal{G}=\mathcal{B}(3,\{A\},M)$ for a singleton $A$ and a $3$-partition $M$ with $|\cup M|<n$ and $A\in M$, or $(\mathcal{F},\mathcal{G})\cong(\mathcal{C}(3,1),\mathcal{D}(3,1))$. Families with these two structures have respective products of sizes $r_1(n,3,3,1)=2\left(\spn{n-1}{2}+1\right)$ and $r_2(n,3,3,1)=2\spn{n-1}{2}-1<r_1(n,3,3,1)$. This finishes the proof.
\end{proof}
\section{Non-trivial $r$-cross $t$-intersecting families}\label{sectionrcross}
In this section, we investigate non-trivial $r$-cross $t$-intersecting families for $r\geq3$. First, the case in which some uniformity equals $t+1$ is quite clear.
\begin{proposition}\label{propell=t+1'}
	Let $r\geq3$, $n\geq k_1\geq k_2\geq\cdots\geq k_r=t+1$ and $k_1\geq t+2$. If $\mathcal{F}_1\subseteq\spn{[n]}{k_1}$, $\mathcal{F}_2\subseteq\spn{[n]}{k_2},\ldots$,$\mathcal{F}_r\subseteq\spn{[n]}{k_r}$ are maximal non-trivial $r$-cross $t$-intersecting families, then one of the following hold.
	\begin{itemize}
		\item[\rm(i)]$k_2=t+1$, and there exists $G\in\spn{[n]}{t+1}$ such that  $\mathcal{F}_1=\left\{F\in\spn{[n]}{k_1}:|F\cap G|=t\right\}$ and $\mathcal{F}_2=\cdots=\mathcal{F}_r=\{G\}$.
		\item[\rm(ii)]$k_{r-1}\geq t+2$, and there is a partition $M$  satisfying $t+1\leq|M|\leq k_{r-1}$ and $M\notin\spn{[n]}{t+1}$ such that  $\mathcal{F}_i=\left\{F\in\spn{[n]}{k_i}:M\subseteq F\right\}$ for $1\leq i\leq r-1$ and $\mathcal{F}_r=\left\{A\cup\{\overline{\cup A}\}:A\in\binom{M}{t}\right\}$.
	\end{itemize}
\end{proposition}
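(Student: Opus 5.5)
The plan is to reduce to the two-family structure of Proposition \ref{propell=t+1}, and split according to whether $k_2=t+1$ or $k_{r-1}\geq t+2$. Let $q\geq1$ be the number of indices with $k_i=t+1$. Any two $(t+1)$-partitions of $[n]$ sharing $t$ blocks coincide. Since the $r$-wise intersection condition implies pairwise cross $t$-intersection, all families $\mathcal{F}_i$ with $k_i=t+1$ must be one and the same singleton $\{G\}$ whenever $q\geq2$.

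\textbf{Case $q\geq2$.} Here $\mathcal{F}_i=\{G\}$ for every $i$ with $k_i=t+1$. Every $F_i$ in a remaining family meets $G$ in at most $t$ blocks, because $F_i\neq G$ when $k_i\geq t+2$. So $\cap_j F_j\subseteq G\cap F_i$, and the condition forces the intersection to be exactly some $t$ blocks of $G$.

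I claim that if two indices $i\neq j$ both have $k_i,k_j\geq t+2$, the families are trivial or impossible. For all $F_i\in\mathcal{F}_i$ and $F_j\in\mathcal{F}_j$, we need $G\cap F_i=G\cap F_j$ as a $t$-subset, since the intersection with the other families is contained in both. Fix $F_j$; then every $F_i$ has the same $t$-subset $X=G\cap F_i$. So all members of all families contain $X$, which makes them trivial and contradicts non-triviality.

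Hence exactly one index, necessarily index $1$, has $k_1\geq t+2$, so $k_2=t+1$. The condition then reduces to $|F\cap G|\geq t$ for $F\in\mathcal{F}_1$, and maximality gives (i).

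\textbf{Case $q=1$.} Here $k_{r-1}\geq t+2$. Fix any $F_r\in\mathcal{F}_r$. The families $\mathcal{F}_1,\ldots,\mathcal{F}_{r-1}$ together with $\mathcal{F}_r$ satisfy the following: for any choice $F_1,\ldots,F_{r-1}$, the set $\cap_{i<r}F_i\cap F_r$ has at least $t$ blocks.

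Because $|F_i\cap F_r|=t$ exactly for each $i<r$ (as $k_i\geq t+2$), all $F_i$ with $i<r$ must share the same $t$ blocks of $F_r$, namely $A(F_r)=F_r\setminus\{\text{one block}\}$. Thus $\mathcal{F}_r=\{A\cup\{\overline{\cup A}\}:A\in\mathcal{A}\}$ for some family $\mathcal{A}$ of $t$-partitions, and every member of every $\mathcal{F}_i$ ($i<r$) contains $M:=\cup\mathcal{A}$.

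Mirroring the claim in the proof of Proposition \ref{propell=t+1}, I will show that $M$ is a partition, by fixing some $F_0\in\mathcal{F}_1$ containing it. If $|\mathcal{A}|=1$, then all $F_i$ contain the common $t$-set $A$, and together with $F_r\supseteq A$ this makes the families trivial, which is excluded. Hence $|\mathcal{A}|\geq2$, so $|M|\geq t+1$.

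If $M$ were a $(t+1)$-partition of $[n]$, then $M\subseteq F_i$ with $k_i\geq t+2$ would be impossible. So $M\notin\spn{[n]}{t+1}$, and $|M|\leq k_{r-1}$ since $M\subseteq F_{r-1}$. Conversely, with $\mathcal{F}_i=\{F:M\subseteq F\}$ for $i<r$ and $\mathcal{F}_r=\{A\cup\{\overline{\cup A}\}:A\in\binom{M}{t}\}$, the tuple is $r$-cross $t$-intersecting. By maximality, $\mathcal{A}=\binom{M}{t}$ and the $\mathcal{F}_i$ are full, which gives (ii).

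\textbf{Main obstacle.} The delicate step is justifying that all $F_i$ ($i<r$) meet $F_r$ in the same $t$ blocks. This comes from $|\cap_i F_i\cap F_r|\geq t$ combined with each $|F_i\cap F_r|\leq t$: each $F_i\cap F_r$ has exactly $t$ blocks and contains the intersection, which has at least $t$ blocks, so they are all equal. The other delicate point is verifying that the members of $M$ are pairwise disjoint, since they all lie in a single $F_0$.
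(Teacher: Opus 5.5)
Your proof is correct, but it is organized differently from the paper's. The paper splits on $|\mathcal{F}_r|$. If $\mathcal{F}_r=\{F_r\}$, non-triviality yields $F_1,F_1'\in\mathcal{F}_1$ with $F_1\cap F_r\neq F_1'\cap F_r$, whose union is all of $F_r$. Hence every member of $\mathcal{F}_2,\ldots,\mathcal{F}_{r-1}$ contains $F_r$ and so equals it. If $|\mathcal{F}_r|\geq2$, the paper imports the contradiction argument from the proof of Proposition~\ref{propell=t+1} to get $\cup\mathcal{A}\subseteq F$ for every $F\in\mathcal{F}_1$, and only afterwards propagates this to the middle families.

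You split instead on the number $q$ of $(t+1)$-uniform families. For $q\geq2$ you use that these families must all equal $\{G\}$, and you rule out two large uniformities by triviality. For $q=1$ you exploit $r\geq3$ from the outset. For fixed $F_r$, each $F_i\cap F_r$ with $i<r$ has exactly $t$ blocks and contains the $r$-wise intersection, so they all coincide. Swapping one coordinate at a time ($F_1\cap F_r=F_2\cap F_r=F_1'\cap F_r$) then shows that the trace $A(F_r)$ does not depend on the other members. This gives $M\subseteq\cap\mathcal{F}_i$ for all $i<r$ at once. So you do not actually need to mirror the claim of Proposition~\ref{propell=t+1}, and disjointness of the blocks of $M$ is automatic. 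The sub-case $|\mathcal{F}_r|=1$ is disposed of by non-triviality through $|\mathcal{A}|=1$.

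Your route is more self-contained and elementary. The paper's is shorter on the page because it recycles the $r=2$ analysis, where the contradiction argument is genuinely needed: with only two families there is no second family to pivot through. You should state explicitly that the coordinate swap is the one place where $r\geq3$ is used. For $r=2$ the swap argument is unavailable, and trace independence can fail, as Proposition~\ref{propell=t+1}(i) shows.
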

\begin{proof}
Suppose first that $|\mathcal{F}_r|=1$. Let $\mathcal{F}_r=\{F_r\}$. Fix an $F_1\in\mathcal{F}_1$, then $F_1$ and $F_r$ have exactly $t$ blocks in common as they are partitions of $[n]$ and $k_1\geq t+2$. Since $\mathcal{F}_1,\ldots,\mathcal{F}_r$ are $r$-cross $t$-intersecting, $F_1\cap F_r$ is contained in each member of $\cup_{2\leq i\leq r-1}\mathcal{F}_i$. Note that the families are non-trivial, then there exists $F_1'\in\mathcal{F}_1$ with $F_1\cap F_r\nsubseteq F_1'$, and so $F_r=(F_1\cap F_r)\cup(F_1'\cap F_r)$. On the other hand, we also have $F_1'\cap F_r\subseteq F$ whenever $F\in\cup_{2\leq i\leq r-1}\mathcal{F}_i$, and then $F_r\subseteq\cup_{2\leq i\leq r-1}\mathcal{F}_i$. Thus $\mathcal{F}_2=\cdots=\mathcal{F}_{r-1}=\{F_r\}$. Hence $\mathcal{F}_1$ consists precisely of those partitions that are  $t$-intersect with $F_r$ as the families are maximal. Therefore, $\mathcal{F}_1,\ldots,\mathcal{F}_r$ conform to the structure in (i) by setting $G=F_r$. 

Suppose $|\mathcal{F}_r|\geq2$. The same argument as in Proposition \ref{propell=t+1} gives  
$$\mathcal{F}_1\subseteq\left\{F\in\spn{[n]}{k_1}:\cup\mathcal{A}\subseteq F\right\}\;\mbox{and}\;\mathcal{F}_r=\left\{A\cup\{\overline{\cup A}\}:A\in\mathcal{A}\right\},$$
where $\mathcal{A}$ is a family of $t$-partitions with $|\mathcal{A}|\geq2$. For all $A\in\mathcal{A}$ and $F_1\in\mathcal{F}_1$, we have $F_1\cap (A\cup\{\overline{\cup A}\})=A$, and then $A\subseteq F$ whenever $F\in\mathcal{F}_2\cup\cdots\cup\mathcal{F}_{r-1}$ due to the fact that  $\mathcal{F}_1,\ldots,\mathcal{F}_r$ are $r$-cross $t$-intersecting. Hence $\cup\mathcal{A}\subseteq\cap\mathcal{F}_i$ for $2\leq i\leq r-1$. Then the maximality gives $\mathcal{F}_i=\left\{F\in\spn{[n]}{k_i}:\cup\mathcal{A}\subseteq F\right\}$ for $1\leq i\leq r-1$, and then gives $\mathcal{A}=\binom{\cup\mathcal{A}}{t}$. By setting $M=\cup\mathcal{A}$, we see that the families conform to the structure in (ii). Since $|\mathcal{F}_r|\geq2$ and $M\subseteq\cap\mathcal{F}_{r-1}$, we have  $t+1\leq|M|\leq k_{r-1}$ and $M\notin\spn{[n]}{t+1}$, which implies that $k_{r-1}\geq t+2$.
\end{proof}
In what follows, we may assume that every uniformity is at least $t+2$. Let us introduce an important ingredient. To be precise, suppose $r\geq3$, and  $\mathcal{F}_1\subseteq\spn{[n]}{k_1}$, $\mathcal{F}_2\subseteq\spn{[n]}{k_2},\ldots$, $\mathcal{F}_r\subseteq\spn{[n]}{k_r}$ are $r$-cross $t$-intersecting. Define
\begin{equation*}
\mathcal{G}_i:=\left\{\cap_{j\neq i}F_j:F_j\in\mathcal{F}_j,\;j\in[r]\setminus\{i\}\right\},\;i=1,2,\ldots,r.
\end{equation*}
It turns out that the families $\mathcal{G}_1,\ldots, \mathcal{G}_r$ accurately capture the property `$r$-cross', rather than merely describing that $\mathcal{F}_1,\ldots,\mathcal{F}_r$ are pairwise cross $t$-intersecting. Indeed, note that for each $i\in[r]$, the families  $\mathcal{F}_i$ and $\mathcal{G}_i$ are cross $t$-intersecting, and thus  every member of $\mathcal{G}_i$ is a $t$-cover of $\mathcal{F}_i$, and vice versa. In addition, let us write $$s_i:=\min\{|G|:G\in\mathcal{G}_i\},\;i=1,2,\ldots,r.$$ 
We find immediately that for each $a\in[r]$, the quantity $s_a\geq t$, and the families  $\{\mathcal{F}_i\}_{i\in[r]\setminus\{a\}}$ are $(r-1)$-cross $s_a$-intersecting but not $(r-1)$-cross $(s_a+1)$-intersecting. This naturally leads to the strategy of considering whether some of the families satisfy stronger intersection properties. Precisely, the proof of Theorem \ref{thm3} proceeds by considering two cases, according to whether there exists some index $a$ such that $\{\mathcal{F}_i\}_{i\in[r]\setminus\{a\}}$ are $(r-1)$-cross $(t+1)$-intersecting.

Before proving the theorem, let us write 
\begin{equation}\label{equfunh}
	h(m,k,t,n):=\left|\left\{F\in\spn{[n]}{k}:|F\cap[[m]]|\geq t\right\}\right|
\end{equation}
for simplicity. In particular, we have $h(t+1,k,t,n)=(t+1)\spn{n-t}{k-t}-t\spn{n-t-1}{k-t-1}$, which equals the size of $\mathcal{D}(k,t)$. We also define
\begin{equation}\label{equfunphi}
	\varphi(m,a;k_1,\ldots,k_r,n):=h(m,k_a,t,n)\prod_{i\neq a}\spn{n-m}{k_i-m}.
\end{equation}
When there is no likelihood of 
confusion, we simply write $\varphi(m,a)$. The next fact required is
\begin{equation}\label{equublkt}
5-k+2k\log_2k>L(k,t)\;\mbox{for}\;k\geq t+2.
\end{equation}
To see this, just note that the AM-GM inequality yields $L(k,t)\leq k-1+2k\log_2\left(\frac{k+2}{2}\right)$, and then (\ref{equublkt}) follows from $\log_2(k+2)-\log_2k<2/(k\ln 2)$.

\begin{lemma}\label{lemmas_a>t}
Suppose $r\geq3$, $k_1\geq k_2\geq\cdots\geq k_r\geq t+2$ and $n\geq 5-k_1+2k_1\log_2k_1$. Let   $\mathcal{F}_1\subseteq\spn{[n]}{k_1}$, $\mathcal{F}_2\subseteq\spn{[n]}{k_2}$, $\ldots,\;\mathcal{F}_r\subseteq\spn{[n]}{k_r}$ be non-trivial $r$-cross $t$-intersecting. If $\max\{s_i:i\in[r]\}>t$, then  $\prod_{i=1}^r|\mathcal{F}_i|\leq\varphi(t+1,r)$. Moreover, if equality holds, then there is an $a\in[r]$ with $k_a=k_r$ and a partition $T$ consisting of $t+1$ singletons such that $\mathcal{F}_a=\mathcal{D}(k_a,T)$ and   $\mathcal{F}_i=\mathcal{C}(k_i,T)$ for each $i\in[r]\setminus\{a\}$.
\end{lemma}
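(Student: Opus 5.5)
Fix an index $a$ with $s_a=\max_i s_i\geq t+1$, so that the families $\{\mathcal{F}_i\}_{i\neq a}$ are $(r-1)$-cross $s_a$-intersecting. Write $s=s_a$. The plan has two halves: bound $\prod_{i\neq a}|\mathcal{F}_i|$ using Theorem \ref{thm1} with parameter $s$, and bound $|\mathcal{F}_a|$ using the $t$-covers supplied by $\mathcal{G}_a$.

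\textbf{Bounding the other families.} Since $r-1\geq2$, Theorem \ref{thm1} applies to $\{\mathcal{F}_i\}_{i\neq a}$ whenever $\max_{i\neq a}k_i\geq s+2$. The hypothesis on $n$ gives $n\geq L(k_1,s)$ by (\ref{equublkt}); this is why the threshold $L_0$ appears. We then get
\[
\prod_{i\neq a}|\mathcal{F}_i|\leq\prod_{i\neq a}\spn{n-s}{k_i-s}.
\]
If instead all $k_i\leq s+1$ for $i\neq a$, the families are essentially single partitions, and this degenerate case is handled directly as in Proposition \ref{propell=t+1'}.

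\textbf{Bounding $\mathcal{F}_a$.} Every member of $\mathcal{G}_a$ is a $t$-cover of $\mathcal{F}_a$, and $\mathcal{G}_a$ contains a partition $G$ with exactly $s$ blocks. Hence every member of $\mathcal{F}_a$ contains at least $t$ blocks of $G$. Summing over $t$-subsets $H\subseteq G$ and applying (\ref{equntkt}) gives
\[
|\mathcal{F}_a|\leq h(s,k_a,t,n)\leq\binom{s}{t}\spn{n-t}{k_a-t}.
\]
This can be sharpened by counting via inclusion--exclusion, with equality only when $G$ consists of singletons.

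\textbf{Combining.} Together these give $\prod_i|\mathcal{F}_i|\leq\varphi(s,a)$, and it remains to compare $\varphi(s,a)$ with $\varphi(t+1,r)$. First, for $s\geq t+2$ we show $\varphi(s,a)<\varphi(t+1,a)$. Lowering $s$ by one multiplies each of the $r-1$ factors $\spn{n-s}{k_i-s}$ by at least $(t+1)(k_1-t+1)$ (Lemma \ref{lemmaspn2lkt}, or Lemma \ref{lemmaspnntkt} directly). Meanwhile $h(s,k_a,t,n)/h(s-1,k_a,t,n)$ is at most about $s/(s-t)$. Second, for fixed $m=t+1$ we show $\varphi(t+1,a)\leq\varphi(t+1,r)$. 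Since
\[
\varphi(t+1,a)\approx(t+1)\spn{n-t}{k_a-t}\prod_{i\neq a}\spn{n-t-1}{k_i-t-1},
\]
this amounts to showing that $\spn{n-t}{k-t}/\spn{n-t-1}{k-t-1}$ is nonincreasing in $k$. That follows from the log-concavity Lemma \ref{lemmalogconcavity} together with the recurrence (\ref{equrecurrence}), and the inequality is strict unless $k_a=k_r$.

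\textbf{Equality case.} Equality forces $s=t+1$ and $k_a=k_r$. It also forces equality in Theorem \ref{thm1} for the families $\{\mathcal{F}_i\}_{i\neq a}$ with parameter $t+1$, so $\mathcal{F}_i=\mathcal{C}(k_i,T)$ for $i\neq a$, where $T$ consists of $t+1$ singletons. Finally, maximality with $|\mathcal{F}_a|=h(t+1,k_a,t,n)$ forces $\mathcal{F}_a=\mathcal{D}(k_a,T)$. Non-triviality is automatic for this configuration.

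\textbf{Main obstacle.} I expect the hardest step to be the comparison $\varphi(s,a)\leq\varphi(t+1,r)$ uniformly over $s$ and over an arbitrary index $a$. In particular, $s$ may be as large as $k_a$, where $\spn{n-s}{k_i-s}$ degenerates and the bound $h(s,\cdot)$ is crude. For $s$ near $k_i$, I would fall back on Lemma \ref{lemmakey}-style bounds and the $(\ell-t+1)^{k-t}$ term.
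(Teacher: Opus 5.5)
Your generic case follows the paper's argument: Theorem \ref{thm1} with parameter $s_a$ for $\{\mathcal{F}_i\}_{i\neq a}$, a $t$-cover bound for $\mathcal{F}_a$, then the comparison of Lemma \ref{lemmafunhmono}. One small correction: summing over $t$-subsets of $G$ gives only $\binom{s}{t}\spn{n-t}{k_a-t}$, not $h(s,k_a,t,n)$. That weaker bound still suffices for $s\geq t+2$, and for $s=t+1$ the sharp bound is easy because $\cup G\neq[n]$ there.

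\textbf{The gap is the case you set aside as degenerate.} Every member of $\mathcal{G}_a$ has at most $\min_{i\neq a}k_i$ blocks, so the case that matters is $s_a=\min_{i\neq a}k_i$. Then $(r-1)$-cross $s_a$-intersection forces $k_i=s_a$ and $\mathcal{F}_i=\{G\}$ for all $i\neq a$, where $G\in\spn{[n]}{s_a}$ is a partition of $[n]$.

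\begin{itemize}
\item Theorem \ref{thm1} is unavailable here, and $\varphi(s_a,a)$ contains the factor $\spn{n-s_a}{0}=0$, so your combining step says nothing.
\item What you need is a bound on $|\mathcal{F}_a|\leq|\{F\in\spn{[n]}{k_a}:|F\cap G|\geq t\}|$.
\item For $s_a\geq t+3$, the union bound together with the slack $\spn{n-t-1}{s_a-t-1}^{r-1}$ in $\varphi(t+1,a)$ is enough.
\item For $s_a=t+2$ there is no slack at all, since $\varphi(t+1,a)=h(t+1,k_a,t,n)$. You must show that $|\{F\in\spn{[n]}{k_a}:|F\cap G|\geq t\}|\leq h(t+1,k_a,t,n)$ for every $G\in\spn{[n]}{t+2}$, with equality only when $G$ has $t+1$ singleton blocks.
\end{itemize}

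This last bound is a genuine extremal estimate over the block sizes of $G$. It is the paper's Lemma \ref{lemmaw}~(i), which maximizes $\sum_{i<j}\spn{|B_i|+|B_j|}{k_a-t}$ over $G$ with at most $t$ singletons. The bound $\binom{t+2}{t}\spn{n-t}{k_a-t}$ from (\ref{equntkt}) is far too weak. Proposition \ref{propell=t+1'} concerns uniformity $t+1$ and does not supply this estimate either. Your suggested fallback to Lemma \ref{lemmakey}-type bounds for $s$ near $k_i$ does not address it.

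\textbf{This case also contains an extremal configuration.} Take $k_1=\cdots=k_r=t+2$, $\mathcal{F}_a=\mathcal{D}(t+2,T)$, and $\mathcal{F}_i=\mathcal{C}(t+2,T)=\{T\cup\{\overline{\cup T}\}\}$ for $i\neq a$. This attains $\varphi(t+1,r)=h(t+1,t+2,t,n)$, with $s_a=t+2$ and $s_i=t$ for $i\neq a$. Your choice of $a$ maximizing $s_i$ picks exactly this index. So your claim that equality forces $s=t+1$ is false. In this case the equality characterization must come from the equality case of the estimate on $G$, which forces $G=T\cup\{\overline{\cup T}\}$ with $T$ consisting of $t+1$ singletons, not from Theorem \ref{thm1}.
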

\begin{proof}
Fix an index $a\in[r]$ with $s_a\geq t+1$, and pick $G\in\mathcal{G}_a$ with $|G|=s_a$. Then $\mathcal{F}_a\subseteq\left\{F\in\spn{[n]}{k_a}:|G\cap F|\geq t\right\}=:\mathcal{W}$. Note that $s_a\leq\max\{k_i:i\in[r]\setminus\{a\}\}$ due to the definition of $\mathcal{G}_a$.

Suppose first that   $s_a=\min\{k_i:i\in[r]\setminus\{a\}\}$. Now $s_a\geq t+2$. Since $\{\mathcal{F}_i\}_{i\in[r]\setminus\{a\}}$ are $(r-1)$-cross $s_a$-intersecting, we have $k_i=s_a$ and $\mathcal{F}_i=\{F\}$ whenever $i\in[r]\setminus\{a\}$, where $F$ is an $s_a$-partition of $[n]$. Then $F=G$ by the definition of $\mathcal{G}_a$, and then $\mathcal{F}_a=\mathcal{W}$  by the maximality of $\mathcal{F}_1,\ldots,\mathcal{F}_r$. If $s_a\geq t+3$, then 
$$|\mathcal{F}_a|\leq\binom{s_a}{t}\spn{n-t}{k_a-t}
<h(t+1,k_a,t,n)\spn{n-t-1}{s_a-t-1}^2\leq\varphi(t+1,a).$$
To see the second inequality, note that $h(t+1,k_a,t,n)>\spn{n-t}{k_a-t}$, and Lemma \ref{lemmamono''} gives $\spn{n-t-1}{s_a-t-1}^2>\binom{s_a}{t}^2/(t+1)^2\geq\binom{s_a}{t}$. If  $s_a=t+2$, then from Lemma \ref{lemmaw}, we have $|\mathcal{F}_a|\leq h(t+1,k_a,t,n)=\varphi(t+1,a)$. Hence we derive from Lemma \ref{lemmafunhmono} that $\prod_{i=1}^r|\mathcal{F}_i|=|\mathcal{F}_a|\leq\varphi(t+1,a)\leq\varphi(t+1,r)$. Suppose the product equals  $\varphi(t+1,r)$. Then $s_a=t+2$, and we have $k_a=k_r$ and $G$ contains exactly $t+1$ singletons from Lemmas \ref{lemmafunhmono}, \ref{lemmaw} and \ref{lemmaspn2lkt} (i). In this case, let $T$ denote the set of these singletons, then $G=T\cup\{\overline{\cup T}\}$, and hence $\mathcal{F}_a=\mathcal{D}(k_a,T)$, and $\mathcal{F}_i=\{G\}=\mathcal{C}(k_i,T)$ for all $i\neq a$. 

Suppose  $s_a<\min\{k_i:i\in[r]\setminus\{a\}\}$. If $\max\{k_i:i\in[r]\setminus\{a\}\}=s_a+1$, then clearly $\prod_{i\neq a}|\mathcal{F}_i|=1$. Suppose $\max\{k_i:i\in[r]\setminus\{a\}\}\geq s_a+2$. Note that  $L(x,y)$ is certainly  increasing on $x$. Then from (\ref{equublkt}), we have $n\geq L(\max\{k_i:i\in[r]\setminus\{a\}\},s_a)$.  Hence Theorem \ref{thm1} applies to $\{\mathcal{F}_i\}_{i\in[r]\setminus\{a\}}$, and therefore 
\begin{equation*}\label{equthm3case11}
	\prod_{i\neq a}|\mathcal{F}_i|\leq\prod_{i\neq a}\spn{n-s_a}{k_i-s_a}.
\end{equation*}
It remains to bound $|\mathcal{F}_a|$. By assumption $k_i>s_a$ for each $i\in[r]\setminus\{a\}$, and so $G$ is not a partition of $[n]$ as by definition $G$ is a proper subset of some member of $\mathcal{F}_i$. When $s_a=t+1$, we have $\mathcal{F}_a\subseteq\left\{F\in\spn{[n]}{k_a}:|F\cap P|\geq t\right\}$ for $P:=G\cup\{\overline{\cup G}\}\in\spn{[n]}{t+2}$. Then Lemma \ref{lemmaw} (i) yields $|\mathcal{F}_a|\leq h(t+1,k_a,t,n)$. If  $s_a\geq t+2$, then Lemma \ref{lemmaw} (ii) gives  $|\mathcal{F}_a|\leq6h(s_a,k_a,t,n)$. Note also that Lemma \ref{lemmaspn2lkt} (i) gives that the quotient $\prod_{i\neq a}\spn{n-t-1}{k_i-t-1}/\spn{n-s_a}{k_i-s_a}$ exceeds $((t+1)(k_1-t+1))^{(r-1)(s_a-t-1)}>6$ for $s_a\geq t+2$. Thus we deduce that
\begin{align*}
	\prod_{i=1}^r|\mathcal{F}_i|\leq h(s_a,k_a,t,n)\prod_{i\neq a}\spn{n-s_a}{k_i-s_a}= \varphi(s_a,a)\leq\varphi(t+1,r),
\end{align*}
where the last inequality follows from  Lemma \ref{lemmafunhmono}. Suppose equality holds. Then $s_a=t+1$ and $k_a=k_r$, and so $\mathcal{F}_a=\mathcal{W}=\mathcal{D}(k_a,G)$. In addition, by Theorem \ref{thm1}, there is a $(t+1)$-partition $T$ consisting of singletons  such that $\mathcal{F}_i=\left\{F\in\spn{[n]}{k_i}:T\subseteq F\right\}=\mathcal{C}(k_i,T)$ whenever $i\neq a$. Since $G\in\mathcal{G}_a$ and $T\subseteq\cap_{i\neq a}(\cap\mathcal{F}_i)$, we have $T\subseteq G$, and so $T=G$ as $|G|=s_a=t+1$.
\end{proof}

To prove Theorem \ref{thm3}, the case left over by the lemma above is that $s_1=\cdots=s_r=t$, namely, every $r-1$ among the families are not $(r-1)$-cross $(t+1)$-intersecting.
\begin{lemma}\label{lemmathm33}
Suppose $r\geq3$ and $n>k_1\geq k_2\geq\cdots\geq k_r\geq t+2$. Let  $\mathcal{F}_1\subseteq\spn{[n]}{k_1}$, $\mathcal{F}_2\subseteq\spn{[n]}{k_2}$, $\ldots,\;\mathcal{F}_r\subseteq\spn{[n]}{k_r}$ be non-trivial $r$-cross $t$-intersecting. If $s_i=t$ for all $i\in[r]$, then the following hold.
\begin{itemize}
\item[\rm (i)]For each $j\in[r]$, every $t$-cover of $\mathcal{G}_j$ is a $(t+1)$-cover of all but at most one of $\mathcal{F}_i$ with $i$ ranging over $[r]\setminus\{j\}$.
\item[\rm(ii)]We have $\tau_t(\mathcal{F}_i)=t$ and $\tau_t(\mathcal{G}_i)\geq t+2$ for all $i\in[r]$.
\end{itemize}
\end{lemma}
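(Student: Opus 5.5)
The plan is to prove (i) by a direct exchange argument that uses only the $r$-cross $t$-intersecting property, the hypothesis $s_j=t$ and non-triviality. Part (ii) will then follow from (i) together with a short counting of blocks.

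\textbf{Part (i).} Fix $j$ and a $t$-cover $Y$ of $\mathcal{G}_j$, and suppose $Y$ fails to be a $(t+1)$-cover of two distinct families $\mathcal{F}_a,\mathcal{F}_b$ with $a,b\neq j$.
\begin{itemize}
\item[(1)] Pick $F_a\in\mathcal{F}_a$ and $F_b\in\mathcal{F}_b$ with $|Y\cap F_a|\le t$ and $|Y\cap F_b|\le t$.
\item[(2)] For any choice of $F_i\in\mathcal{F}_i$ with $i\notin\{j,a,b\}$, the partition $\cap_{i\neq j}F_i$ lies in $\mathcal{G}_j$. Hence it contains at least $t$ blocks of $Y$, and these lie in $Y\cap F_a$. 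So $Z:=Y\cap F_a$ has exactly $t$ blocks, and $Z=Y\cap F_b$.
\item[(3)] Keep $F_b$ fixed and let the member of $\mathcal{F}_a$ vary over all of $\mathcal{F}_a$. The same count shows that every $F'_a\in\mathcal{F}_a$ contains $Z$, and by the symmetric argument every member of $\mathcal{F}_b$ contains $Z$.
\item[(4)] Every member of $\mathcal{F}_i$ with $i\notin\{j,a,b\}$ also contains $Z$. Consequently every member of $\mathcal{G}_j$ contains $Z$.
\item[(5)] Since $s_j=t$, some member of $\mathcal{G}_j$ has exactly $t$ blocks, and by (4) it equals $Z$. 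Every $F_j\in\mathcal{F}_j$ must share $t$ blocks with it, so $Z\subseteq F_j$.
\end{itemize}
Thus $Z\subseteq\cap_{i=1}^r(\cap\mathcal{F}_i)$ with $|Z|=t$, contradicting non-triviality.

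\textbf{Part (ii), first claim.} Since $s_i=t$, a member of $\mathcal{G}_i$ with $t$ blocks is a $t$-cover of $\mathcal{F}_i$. Hence $\tau_t(\mathcal{F}_i)=t$.

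\textbf{Part (ii), $\tau_t(\mathcal{G}_i)\ge t+1$.} Suppose $Y$ is a $t$-cover of $\mathcal{G}_i$ with $|Y|\le t+1$. Because $r\ge3$, part (i) makes $Y$ a $(t+1)$-cover of at least $r-2\ge1$ of the families $\mathcal{F}_a$ with $a\neq i$. This forces $|Y|\ge t+1$, which already excludes $|Y|=t$.

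\textbf{Part (ii), $\tau_t(\mathcal{G}_i)\ge t+2$.} This is the main obstacle. If $|Y|=t+1$, the previous step gives some $a\neq i$ with $Y\subseteq\cap\mathcal{F}_a$. I would aim to show that a fixed $t$-subset of $Y$ lies in every member of every family, contradicting non-triviality.
\begin{itemize}
\item[(1)] Take $W\in\mathcal{G}_i$ with $|W|=t$, which exists since $s_i=t$. As $Y$ is a $t$-cover of $\mathcal{G}_i$, we get $W\subseteq Y$.
\item[(2)] Take $V\in\mathcal{G}_a$ with $|V|=t$, which exists since $s_a=t$. Every member of $\mathcal{F}_a$ contains $Y$ and must share $t$ blocks with $V$, so $V\subseteq Y$.
\item[(3)] Next I would run the exchange argument of part (i) once more, with the role of $j$ played by $a$. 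The aim is to show that $V$ (or $W$) is contained in every member of every $\mathcal{F}_\ell$, replacing one factor of the intersection defining $V$ at a time.
\end{itemize}
If that direct route fails, the fallback is to apply (i) to $\mathcal{G}_a$ with the $(t+1)$-partition $Y$. One would show $Y$ is also a $t$-cover of $\mathcal{G}_a$, because every such intersection meets a member of $\mathcal{F}_a\supseteq Y$ in at least $t$ blocks. Then $Y$ is a $(t+1)$-cover of (hence contained in all members of) all but one family other than $\mathcal{F}_a$. Together with $Y\subseteq\cap\mathcal{F}_a$, this puts $Y$ in all but at most one family. The remaining family must then share $t$ blocks with a $t$-element member $W\subseteq Y$ of the corresponding $\mathcal{G}$, which would again yield a common $t$-set and the desired contradiction.
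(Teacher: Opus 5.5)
You have part (i), $\tau_t(\mathcal{F}_i)=t$ and $\tau_t(\mathcal{G}_i)\ge t+1$ right. Your order differs from the paper's: the paper proves $\tau_t(\mathcal{G}_j)\ge t+1$ first and uses it inside (i), while your (i) directly produces a common $t$-set $Z$, and you get $\tau_t(\mathcal{G}_i)\ge t+1$ as a corollary. Both orders are fine.

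\textbf{The gap.} The remaining step, ruling out a $t$-cover $Y$ of $\mathcal{G}_i$ with $|Y|=t+1$, is left unproved, and the steps you offer do not work.
\begin{itemize}
\item Step (2) of your main route does not follow. A member of $\mathcal{F}_a$ containing both $Y$ and the $t$-block partition $V\in\mathcal{G}_a$ says nothing about whether $V\subseteq Y$. The blocks of $V$ may be blocks of that member lying outside $Y$.
\item Step (3) is only stated as an aim.
\item The fallback has the same defect. A member of $\mathcal{G}_a$ meets each $F_a\supseteq Y$ in at least $t$ blocks, but those blocks may lie in $F_a\setminus Y$. So this does not show that $Y$ is a $t$-cover of $\mathcal{G}_a$. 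For instance, when $r=3$ and $[3]=\{i,a,b\}$, you would need $|Y\cap F_i\cap F_b|\ge t$ for all $F_i,F_b$. You only know $|Y\cap F_i|\ge t$ and $|Y\cap F_b|\ge t$ separately.
\end{itemize}

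\textbf{The fix.} Work with the exceptional index rather than with $a$, and push $W$ rather than $Y$.
\begin{itemize}
\item By (i), and since $|Y|=t+1$, $Y$ is contained in every member of $\mathcal{F}_c$ for all $c\neq i$ except at most one index $b$. If there is no exception, take any $b\neq i$.
\item Your step (1) gives $W\subseteq Y$. Also $W\subseteq\cap\mathcal{F}_i$, for the same reason you used to get $\tau_t(\mathcal{F}_i)=t$.
\item Hence $W\subseteq\cap\mathcal{F}_c$ for every $c\neq b$, so every member of $\mathcal{G}_b$ contains $W$.
\item So $W$ is a $t$-cover of $\mathcal{G}_b$ with only $t$ blocks. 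This contradicts $\tau_t(\mathcal{G}_b)\ge t+1$, which you had already established.
\end{itemize}
Equivalently, as in your step (5) of part (i): the $t$-block member of $\mathcal{G}_b$ must equal $W$ and lies in $\cap\mathcal{F}_b$, contradicting non-triviality. This is exactly how the paper concludes.
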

\begin{proof}
Let $j\in[r]$ and $T_j$ be a $t$-cover of $\mathcal{G}_j$, then by the definition of $\mathcal{G}_j$,
\begin{equation}\label{equthm3case21}
	|T_j\cap(\cap_{i\neq j}F_i)|\geq t\;\mbox{for all}\;F_i\in\mathcal{F}_i,\;i\in[r]\setminus\{j\}.
\end{equation}
We claim that  $\tau_t(\mathcal{G}_j)\geq t+1$ for $1\leq j\leq r$. Indeed, assume to the contrary that $\tau_t(\mathcal{G}_j)=t$ for some $j\in[r]$, and pick a $t$-cover $T_j$ of $\mathcal{G}_j$ with $t$ blocks. Clearly (\ref{equthm3case21}) implies $T_j\subseteq\cap\mathcal{F}_i$ for all $i\neq j$. Since $s_j=t$, there exist  $F_i\in\mathcal{F}_i,i\in[r]\setminus\{j\}$ such that $G_j:=\cap_{i\neq j}F_i$ has exactly $t$ blocks, and then $T_j=G_j$. However, note that $G_j\subseteq\cap\mathcal{F}_j$ as $\mathcal{F}_1,\ldots,\mathcal{F}_r$ are $r$-cross $t$-intersecting. This leads to $T_j\subseteq\cap_{i\in[r]}(\cap\mathcal{F}_i)$, which contradicts that $\mathcal{F}_1,\ldots,\mathcal{F}_r$ are non-trivial. Hence $\tau_t(\mathcal{G}_j)\geq t+1$ for  $1\leq j\leq r$. 

(i)\;Let $j\in[r]$ and let $T$ be a $t$-cover of $\mathcal{G}_j$. We may assume that there exists  $a\in[r]\setminus\{j\}$ and $F_a\in\mathcal{F}_a$ such that $|T\cap F_a|=t$. Let $F\in\cup_{i\neq j,a}\mathcal{F}_i$. We have to prove  $|T\cap F|\geq t+1$. Using (\ref{equthm3case21}), we derive that $T\cap F_a\subseteq F$. Since $\tau_t(\mathcal{G}_j)\geq t+1$, there is an $F_a'\in\mathcal{F}_a$ with $T\cap F_a\nsubseteq F_a'$. Then $T\cap F_a\subsetneqq T\cap F$ as $|T\cap F_a'\cap F|\geq t$, and thus $|T\cap F|\geq t+1$, as desired.

(ii)\;Let $j\in[r]$. Note that $\mathcal{G}_j$ contains some partition $G_j$ with exactly $t$ blocks since $s_j=t$, and then it is contained in each partition in  $\mathcal{F}_j$ as $\mathcal{F}_1,\ldots,\mathcal{F}_r$ are $r$-cross $t$-intersecting. Thus $\tau_t(\mathcal{F}_j)=t$.  Also note that $\tau_t(\mathcal{G}_j)\geq t+1$. Then it remains to prove $\tau_t(\mathcal{G}_j)\neq t+1$. We prove by contradiction. Assume that $\tau_t(\mathcal{G}_j)=t+1$, and let $T$ be a $t$-cover of $\mathcal{G}_j$ with $t+1$ blocks. Then we obtain $G_j\subseteq T$ from (\ref{equthm3case21}). From (i), there exists $a\in[r]\setminus\{j\}$ such that $|T\cap F|\geq t+1$, or equivalently $T\subseteq F$ for all $F\in\cup_{i\neq{j,a}}\mathcal{F}_i$. Now $G_j\subseteq\cup_{i\neq a}(\cap\mathcal{F}_i)$, namely, it forms a $t$-cover of $\mathcal{G}_a$ with only $t$ blocks, which is impossible as $\tau_t(\mathcal{G}_a)>t$.
\end{proof}
\begin{lemma}\label{lemmas_a=t}
Suppose $r\geq3$, $k_1\geq k_2\geq\cdots\geq k_r\geq t+2$ and $n\geq5-k_1+2k_1\log_2k_1$. Let   $\mathcal{F}_1\subseteq\spn{[n]}{k_1}$, $\mathcal{F}_2\subseteq\spn{[n]}{k_2}$, $\ldots,\;\mathcal{F}_r\subseteq\spn{[n]}{k_r}$ be non-trivial $r$-cross $t$-intersecting. If  $s_i=t$ for all $i\in[r]$, then  $\prod_{i=1}^r|\mathcal{F}_i|<\varphi(t+1,r)$.
\end{lemma}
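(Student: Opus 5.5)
We plan to bound each $|\mathcal{F}_i|$ separately with the $t$-cover machinery of Section \ref{section2}, applied to the pair $(\mathcal{F}_i,\mathcal{G}_i)$, and then multiply. By Lemma \ref{lemmathm33} (ii), for every $i\in[r]$ we have $\tau_t(\mathcal{F}_i)=t$ and $\tau_t(\mathcal{G}_i)\geq t+2$. Each member of $\mathcal{G}_i$ is a $t$-cover of $\mathcal{F}_i$ and has at most $k_{i'}\leq k_1$ blocks, where $i'\neq i$ is any other index. So Lemma \ref{lemmakey} applies to $\mathcal{F}_i$ with $\mathcal{G}=\mathcal{G}_i$ and $\ell=k_1$. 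This is legitimate because $n\geq 5-k_1+2k_1\log_2k_1>L(k_1,t)$ by (\ref{equublkt}).

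Fix a $t$-cover $X_i$ of $\mathcal{F}_i$ with $t$ blocks. Then $X_i\subseteq\cap\mathcal{F}_i$, so $\mathcal{F}_i=(\mathcal{F}_i)_{X_i}$. Lemma \ref{lemmakey} with $\tau_t(\mathcal{G}_i)\geq t+2$ then gives
\[
|\mathcal{F}_i|\leq \max\left\{(k_1-t+1)^{2}\spn{n-t-2}{k_i-t-2},\;(k_1-t+1)^{k_i-t}\right\}.
\]
Here we use the monotonicity of $g$ in $m$ (Lemma \ref{lemmamono} (ii)), which lets us reduce to $m=t+2$. When $k_i=t+2$ the right-hand side is the constant $(k_1-t+1)^2$.

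Next we compare with $\varphi(t+1,r)=h(t+1,k_r,t,n)\prod_{i<r}\spn{n-t-1}{k_i-t-1}$. Two facts suffice:
\begin{itemize}
\item $h(t+1,k_r,t,n)\geq \spn{n-t-1}{k_r-t-1}$;
\item by Lemma \ref{lemmaspn2lkt} (i), whenever $k_i\geq t+3$,
\[
\spn{n-t-1}{k_i-t-1}>(t+1)(k_1-t+1)\spn{n-t-2}{k_i-t-2}.
\]
\end{itemize}
The second fact shows each factor $|\mathcal{F}_i|$ is at most $(k_1-t+1)/(t+1)$ times $\spn{n-t-1}{k_i-t-1}$. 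This ratio can exceed $1$, so the crude bound alone does not close the argument. To fix this, we will extract one extra factor $(t+1)(k_1-t+1)$ per family. Concretely, we iterate Lemma \ref{lemmainductive} one step further, or apply Lemma \ref{lemmaspn2lkt} (i) twice, which is available since $n$ is about $2k_1\log_2 k_1$. This gives
\[
\spn{n-t-1}{k_i-t-1}\geq ((t+1)(k_1-t+1))^{2}\spn{n-t-2}{k_i-t-2}.
\]
In the regime $k_i\geq t+3$ this yields $|\mathcal{F}_i|<\spn{n-t-1}{k_i-t-1}/(t+1)^2$, and multiplying over $i$ gives a product far below $\varphi(t+1,r)$.

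The main obstacle is the small-uniformity case $k_i=t+2$ (possibly for several $i$, e.g.\ $k_r=t+2$). There $\spn{n-t-1}{1}=1$, while the bound on $|\mathcal{F}_i|$ is only a constant. The saving must then come from the large factor $h(t+1,k_r,t,n)$, or from some $k_j\geq t+3$, against the constants $(k_1-t+1)^2$.

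Our plan for this case is to use Lemma \ref{lemmathm33} (i) to sharpen the bound on such $\mathcal{F}_i$. If $k_i=t+2$ and $|\mathcal{F}_i|\geq 2$, the argument of Case 1 of Lemma \ref{lemmat+2small} forces strong structure on the other families, for instance a common $(t+1)$-subpartition for all but one index. That structure contradicts $\tau_t(\mathcal{G}_j)\geq t+2$. So we expect to show $|\mathcal{F}_i|=1$ whenever $k_i=t+2$, except for boundedly many configurations. We then finish by comparing a product of constants times $\prod_{k_i\geq t+3}\spn{n-t-2}{k_i-t-2}(k_1-t+1)^2$ with $h(t+1,k_r,t,n)\geq (t+1)\spn{n-t}{k_r-t}/2$, using (\ref{equnkn-1k}) and Lemma \ref{lemmaspn2lkt}. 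If every $k_i=t+2$, the product is at most $(k_1-t+1)^{2r}=3^{2r}$, far less than $\varphi(t+1,r)\geq h(t+1,t+2,t,n)$, which is exponential in $n$.
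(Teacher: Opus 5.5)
Your per-family bound from Lemma~\ref{lemmakey} is fine. The proof breaks at the step where you ``extract one extra factor $(t+1)(k_1-t+1)$ per family.''

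The inequality you need is $\spn{n-t-1}{k_i-t-1}\geq((t+1)(k_1-t+1))^{2}\spn{n-t-2}{k_i-t-2}$. This is a \emph{one-step} drop with a squared constant, i.e.\ Lemma~\ref{lemmaspn2lkt}~(ii) with $s=1$, and that lemma requires $n\geq 2L(k_1,t)-t-2$. The hypothesis $n\geq 5-k_1+2k_1\log_2k_1$ does not give this; for $t=1$ the requirement reads $n\geq 2k_1\log_2k_1+2k_1+1$. Neither of your two justifications produces the inequality:
\begin{itemize}
\item Applying Lemma~\ref{lemmaspn2lkt}~(i) twice only compares $\spn{n-t}{k_i-t}$ with $\spn{n-t-2}{k_i-t-2}$. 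That helps the single factor $h(t+1,k_r,t,n)$, not the $r-1$ factors $\spn{n-t-1}{k_i-t-1}$.
\item Lemma~\ref{lemmainductive} cannot be iterated further, because the chain in Lemma~\ref{lemmakey} stops at $\tau_t(\mathcal{G}_i)$, and you only know $\tau_t(\mathcal{G}_i)\geq t+2$.
\end{itemize}
In fact the inequality is false in the allowed range. Take $k_i=k_1=1000$, $t=50$, $n=18937$. Lemma~\ref{lemmaestmtspn} gives $\spn{n-t-1}{k_1-t-1}/\spn{n-t-2}{k_1-t-2}\approx(949/948)^{18885}\approx e^{19.9}$, whereas $((t+1)(k_1-t+1))^2=48501^2\approx e^{21.6}$.

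The underlying problem is structural. Since $r$ is unbounded, a product of separately derived bounds works only if every factor is at most its counterpart in $\varphi(t+1,r)$; any constant loss per family compounds. This already breaks your closing claim that $3^{2r}$ is far below $h(t+1,t+2,t,n)$: for $t=1$, $n=12$ one has $h=2045<9^4$.

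Rescuing your route would need two things:
\begin{itemize}
\item $|\mathcal{F}_i|=1$ whenever $k_i=t+2$. Your Lemma~\ref{lemmat+2small}-type idea does give this: otherwise $X_i$ plus one block of a member of $\mathcal{F}_i$ would be a $t$-cover of $\mathcal{G}_i$ with $t+1$ blocks, contradicting $\tau_t(\mathcal{G}_i)\geq t+2$.
\item A sharper Stirling estimate such as $\spn{n-t-1}{k_i-t-1}\geq(k_1-t+1)^2\spn{n-t-2}{k_i-t-2}$. Neither Lemma~\ref{lemmaspn2lkt} nor your proposal supplies this.
\end{itemize}

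The paper avoids per-family losses by exploiting the $r$-cross structure.
\begin{itemize}
\item For $r=3$, Lemma~\ref{lemmathm33}~(i) yields a minimum $t$-cover $T$ of $\mathcal{G}_a$ that is a $(t+1)$-cover of some $\mathcal{F}_b$. This gives $|\mathcal{F}_b|\leq(m_a-t)\spn{n-t-1}{k_b-t-1}$ and $|\mathcal{F}_c|\leq(m_a-t+1)\spn{n-t-1}{k_c-t-1}$. The resulting polynomial loss is absorbed by comparing the Lemma~\ref{lemmakey} bound for $\mathcal{F}_a$ with $h(t+1,k_a,t,n)$ (Lemma~\ref{lemmathm3case21}).
\item For $r\geq4$, the families $\mathcal{G}_{a,b}$ either force $|\cap\mathcal{F}_i|\geq t+1$ for every $i$, or allow Theorem~\ref{thm1} to be applied to $\{\mathcal{F}_i\}_{i\neq a,b}$, and also to $\mathcal{F}_a,\mathcal{F}_b$ when $s_{a,b}=t+1$. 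This is where $n\geq5-k_1+2k_1\log_2k_1$ is used. It bounds all but at most two factors by $\spn{n-t-1}{k_i-t-1}$ or better, independently of $r$.
\end{itemize}
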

\begin{proof}
Recall that our key ingredient, Lemma~\ref{lemmakey}, suggests bounding the size of a family using a suitable collection of $t$-covers. Here the desired families of $t$-covers are nothing but $\mathcal{G}_1,\ldots,\mathcal{G}_r$. For $i\in[r]$, we write  $m_i=\tau_t(\mathcal{G}_i)$, and fix a $G_i\in\mathcal{G}_i$ with $|G_i|=s_i=t$. Note that $G_i\subseteq\cap\mathcal{F}_i$, and Lemma \ref{lemmathm33} (ii) gives $m_i\geq t+2$ whenever $i\in[r]$. Note also that every member of $\mathcal{F}_i$ is a $t$-cover of $\mathcal{G}_i$, which gives $m_i\leq k_i$. Let $\ell_i=\max\{|G|:G\in\mathcal{G}_i\}$, then clearly $\ell_i\leq\min\{k_j:j\in[r]\setminus\{i\}\}\leq k_{r-1}$. By combining these with Lemma \ref{lemmakey}, we obtain that 
\begin{align}
	|\mathcal{F}_i|&=|(\mathcal{F}_i)_{G_i}|\leq\max\left\{(\ell_i-t+1)^{m_i-t}\spn{n-m_i}{k_i-m_i},\;(\ell_i-t+1)^{k_i-t}\right\}\nonumber\\
	&\leq  g(m_i,k_i,\ell_i,t,n)\binom{m_i}{t}^{-1},\;i=1,2,\ldots,r.\label{equthm3case211}
\end{align}
\noindent{\bf Case 1.}\;$r=3$.

Fix an $a\in[3]$ and a $t$-cover $T$ of $\mathcal{G}_a$ with size $m_a$. By Lemma \ref{lemmathm33} (i), there is another index $b$ such that $|T\cap F|\geq t+1$ for all $F\in\mathcal{F}_b$. Let $c$ be the remaining index. 

We use (\ref{equthm3case211}) to bound the size of $\mathcal{F}_a$, and let us consider $\mathcal{F}_b$ and $\mathcal{F}_c$, respectively.  Write $s=|T\cap G_b|$. Every partition in $\mathcal{F}_b$ contains $G_b$, and shares at least $t+1-s$ blocks with $T\setminus G_b$, implying that $\mathcal{F}_b={\textstyle\cup_H}\mathcal{F}_{G_b\cup H}$, where the summation is taken over all $(t+1-s)$-subsets $H$ of $T\setminus G_b$. Applying Lemma \ref{lemmamono'} to $k=k_b,u=s-1,\ell=m_a-1$ and $i=0$, we derive that
\begin{align*}
	|\mathcal{F}_b|\leq&{\textstyle\sum_H}|\mathcal{F}_{G_b\cup H}|\leq\binom{m_a-s}{t+1-s}\spn{n-t-(t+1-s)}{k_b-t-(t+1-s)}\leq(m_a-t)\spn{n-t-1}{k_b-t-1}.
\end{align*}
Now let us consider $\mathcal{F}_c$. If $T$ is also a $(t+1)$-cover of $\mathcal{F}_c$, then we can prove  $|\mathcal{F}_c|\leq(m_a-t)\spn{n-t-1}{k_c-t-1}$ by the same argument. Suppose that $|T\cap F_c|=t$ for some $F_c\in\mathcal{F}_c$. By (\ref{equthm3case21}), we obtain $T\cap F_c\subseteq F$ whenever $F\in\mathcal{F}_b$. Since $\tau_t(\mathcal{G}_a)>t$, there exists $F_c'\in\mathcal{F}_c$ with $T\cap F_c\nsubseteq F_c'$. This together with $T\cap G_c\subseteq T\cap F_c\cap F_c'$ yields  $s':=|T\cap G_c|<t$. Now every member of $\mathcal{F}_c$ contains at least $t-s'$ blocks of $T\setminus G_c$, and therefore
\begin{equation*}
	|\mathcal{F}_c|\leq\binom{m_a-s'}{t-s'}\spn{n-t-(t-s')}{k_c-t-(t-s')}\leq(m_a-t+1)\spn{n-t-1}{k_c-t-1},
\end{equation*} 
where in the second step we use Lemma \ref{lemmamono'} again.

Using our upper bounds above, to establish $|\mathcal{F}_a||\mathcal{F}_b||\mathcal{F}_c|<\varphi(t+1,r)$, we need only to verify, from the fact $\varphi(t+1,a)\leq\varphi(t+1,r)$, that
\begin{equation*}
	h(t+1,k_a,t,n)>(m_a-t)(m_a-t+1)g(m_a,k_a,\ell_a,t,n)\binom{m_a}{t}^{-1}.
\end{equation*}
This is proved in Lemma \ref{lemmathm3case21}.\\
{\bf Case 2.}\;$r\geq4$.

Intuitively, let us consider whether there exist $r-2$ families with a stronger intersection property. For $a\neq b\in[r]$, set $\mathcal{G}_{a,b}=\{\cap_{i\neq a,b}F_i:F_i\in\mathcal{F}_i,i\in[r]\setminus\{a,b\}\}$ and $s_{a,b}=\min\{|G|:G\in\mathcal{G}_{a,b}\}$. Note that $\{\mathcal{F}_i\}_{i\neq a,b}$ are $(r-2)$-cross $s_{a,b}$-intersecting.

When $s_{a,b}=t$ for all $a\neq b\in[r]$, by picking $G_{a,b}\in\mathcal{G}_{a,b}$ with $|G_{a,b}|=t$, we obtain that $G_{a,b}\subseteq F$ for all $F\in\mathcal{F}_a\cup\mathcal{F}_b$\;($a\neq b\in[r]$). It follows that $T_i:=\cup_{j\neq i}G_{i,j}\subseteq\cap\mathcal{F}_i,i=1,2,\ldots,r$. Since $\mathcal{F}_1,\ldots,\mathcal{F}_r$ are non-trivial, $|T_i|\geq t+1$ for all $i\in[r]$, and hence $$\prod_{i=1}^r|\mathcal{F}_i|\leq\prod_{i=1}^r\spn{n-t-1}{k_i-t-1}<\varphi(t+1,r).$$

It remains to consider the case that $s_{a,b}\geq t+1$ for some $a\neq b\in[r]$. We start by proving that $s_{a,b}<\min\{k_i:i\in[r]\setminus\{a,b\}\}$. Otherwise, there exists $G_{a,b}\in\spn{[n]}{s_{a,b}}$ such that  $\mathcal{F}_i=\{G_{a,b}\}$ for every $i\in[r]\setminus\{a,b\}$. Then $G_b$ is a subset of $G_{a,b}$, and so it is a $t$-cover of $\mathcal{G}_a$, which contradicts that $\tau_t(\mathcal{G}_a)\geq t+2$. Hence $s_{a,b}<\min\{k_i:i\in[r]\setminus\{a,b\}\}$.

If $s_{a,b}\geq t+2$, then Theorem \ref{thm1} gives $\prod_{i\neq a,b}|\mathcal{F}_i|\leq\prod_{i\neq a,b}\spn{n-t-2}{k_i-t-2}$, and this together with (\ref{equthm3case211}) yields
$$\prod_{i=1}^r|\mathcal{F}_i|\leq\left(\prod_{i\neq a,b}\spn{n-t-2}{k_i-t-2}\right)\left(\prod_{j=a,b}g(m_j,k_j,\ell_j,t,n)\binom{m_j}{t}^{-1}\right).$$
Then we get $\prod_{i=1}^r|\mathcal{F}_i|<\varphi(t+1,r)$ from Lemma \ref{lemmathm3case22}.

Suppose $s_{a,b}=t+1$. We claim that  $\mathcal{F}_a$ and $\mathcal{F}_b$ are cross $(t+1)$-intersecting. To the contrary, assume that for some $W_a\in\mathcal{F}_a$ and $W_b\in\mathcal{F}_b$, the partition $X:=W_a\cap W_b$ has exactly $t$ blocks. Let $T\in\mathcal{G}_{a,b}$ with $|T|=t+1$. Then  $X\subseteq F$ for all $F\in\cup_{i\neq a,b}\mathcal{F}_i$, and particularly $X\subseteq T$. Note that  $W_a\cap W_b\cap T=X$, then at least one of $W_a$ and $W_b$ does not contain $T$. Without loss of generality, suppose $T\nsubseteq W_a$. It follows from $|W_a\cap T|\geq t$ and $|T|=t+1$ that $T\cap W_a=X$, and hence $X\subseteq\cap\mathcal{F}_b$. However, this yields that $X$ is a $t$-cover of $\mathcal{G}_a$ as $X\subseteq\cap_{i\neq a}(\cap\mathcal{F}_i)$, which contradicts that $\tau_t(\mathcal{G}_a)\geq t+2$. Thus our claim is true. By applying  Theorem \ref{thm1} to $\{\mathcal{F}_a,\mathcal{F}_b\}$ and to $\{\mathcal{F}_i\}_{i\neq a,b}$, respectively, we deduce again $\prod_{i=1}^r|\mathcal{F}_i|\leq\prod_{i=1}^r\spn{n-t-1}{k_i-t-1}$, and thus $\prod_{i=1}^r|\mathcal{F}_i|<\varphi(t+1,r)$. This finishes the proof.
\end{proof}
\noindent{\bf Proof of Theorem \ref{thm3}.}\;This follows from Lemmas \ref{lemmas_a>t} and \ref{lemmas_a=t}.{\hfill$\square$}
\section{Technical estimates}\label{sectionappendix}
	In this section, we collect and prove several estimates required for this paper. Let us note that the functions  $f(m,k,\ell,t,n)$ and $g(m,k,\ell,t,n)$ are defined in (\ref{equfunf}) and (\ref{equfung}), respectively. The functions $r(n,k,\ell,t),r_1(n,k,\ell,t),r_2(n,k,\ell,t),h(m,k,t,n)$ and $\varphi(m,a)$ used in Sections \ref{section3} and \ref{sectionrcross} are defined in (\ref{equfunr}), (\ref{equfunr1}), (\ref{equfunr2}), (\ref{equfunh}) and (\ref{equfunphi}), respectively.
\begin{lemma}\label{lemmaestmtspn}
	Let $r\geq2,c\geq1$ and $m\geq cr(1+\ln r)$. Then $\left(1-r(er)^{-c}\right)\frac{r^m}{r!}<\spn{m}{r}<\frac{r^m}{r!}$. In particular, we have $\spn{m}{r}>\frac{1}{4}r^{m-r+2}$.
\end{lemma}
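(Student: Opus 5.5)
The plan is to start from the explicit formula (\ref{equspn}) and isolate the $j=0$ term. Writing
$$r!\spn{m}{r}=r^m-\binom{r}{1}(r-1)^m+\binom{r}{2}(r-2)^m-\cdots,$$
the upper bound $\spn{m}{r}<r^m/r!$ is just the fact that the surjections $[m]\to[r]$ form a proper subset of all maps $[m]\to[r]$ when $r\geq2$. Indeed, $r!\spn{m}{r}$ counts surjections, and $r^m$ counts all maps, including the non-surjective constant maps.

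For the lower bound, I will use the union bound, which is the $s=1$ case of Lemma \ref{lemmain-ex}. The non-surjective maps miss at least one value, so their number is at most $r(r-1)^m$. This gives
$$r!\spn{m}{r}\geq r^m\Bigl(1-r(1-1/r)^m\Bigr).$$
It then suffices to show $(1-1/r)^m< (er)^{-c}$. Using $1-1/r<e^{-1/r}$, we get $(1-1/r)^m<e^{-m/r}\leq e^{-c(1+\ln r)}=(er)^{-c}$, since $m\geq cr(1+\ln r)$. This yields the strict inequality $\bigl(1-r(er)^{-c}\bigr)r^m/r!<\spn{m}{r}$.

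For the ``in particular'' claim, I take $c=1$, so the hypothesis is $m\geq r(1+\ln r)$. Then $1-r(er)^{-1}=1-1/e>1/2$, giving $\spn{m}{r}>\tfrac12 r^m/r!$. It remains to show $r^m/(2\,r!)\geq \tfrac14 r^{m-r+2}$, which is equivalent to $2r^{r-2}\geq r!$, that is, $r!\leq 2r^{r-2}$ for all $r\geq2$.

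This last inequality is the only step needing care. It holds at $r=2$ ($2\leq2$) and at $r=3$ ($6\leq6$). For the induction step, the ratio of consecutive right-hand sides is $(r+1)^{r-1}/r^{r-2}=(r+1)(1+1/r)^{r-2}\geq r+1$, which is at least the ratio $r+1$ of consecutive left-hand sides. So $r!\leq 2r^{r-2}$ propagates for all $r\geq2$.

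The main, though mild, obstacle is ensuring the strict inequalities survive at the boundary: the union-bound step is strict because $e^{-1/r}>1-1/r$, and the final comparison $\tfrac12 r^m/r!\geq \tfrac14 r^{m-r+2}$ is allowed to be an equality at $r=2,3$, since strictness has already been obtained in the previous step.
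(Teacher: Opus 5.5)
Your proposal is correct and follows essentially the same route as the paper: you count surjections for the upper bound, and for the lower bound you apply the union bound over the $r$ omitted values together with $(1-1/r)^m<e^{-m/r}\le(er)^{-c}$. You then combine the factor $1-e^{-1}$ with $r!\le 2r^{r-2}$ for the final estimate, just as the paper does, and your only addition is an inductive proof of $r!\le 2r^{r-2}$, which the paper simply asserts.
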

\begin{proof}
	Recall that $r!\spn{m}{r}$ counts the number of surjections of $[m]$ onto $[r]$. Then clearly $r!\spn{m}{r}<r^m$. For the lower bound, note that for each  $i\in[r]$, there are $(r-1)^m$ maps sending no element to $i$. Applying the  union bound yields 
	$$r!\spn{m}{r}\geq r^m-r(r-1)^m=r^m\left(1-r\left(\frac{r-1}{r}\right)^m\right),$$
	and then the desired bound holds as  $\left(\frac{r-1}{r}\right)^m<e^{-\frac{m}{r}}\leq(er)^{-c}$. Further, we infer from $r!\leq2r^{r-2}$ that $\spn{m}{r}>(1-e^{-1})r^{m}/r!>\frac{1}{4}r^{m-r+2}$.
\end{proof}
	\begin{lemma}[\cite{Wen-Lv-2026}]\label{lemmaqkt}
		Let $t\geq1$. Then $Q(s,t):=s^{L(s+t,t)-2s-t+1}\binom{s+t}{t}^{-1}$ is increasing on $s\geq2$. In particular, $Q(s,t)\geq18.$
	\end{lemma}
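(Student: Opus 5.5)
The plan is to take logarithms and compare consecutive values of $s$ directly; the binomial coefficient grows slowly while the power of $s$ grows quickly.

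First I rewrite the exponent. By (\ref{equfunlkt}) with $k=s+t$ we have $L(s+t,t)=(t+1)+(s+1)\log_2\bigl((t+1)(s+1)\bigr)$. Hence the exponent of $s$ in $Q(s,t)$ is
$$E(s):=L(s+t,t)-2s-t+1=(s+1)\log_2\bigl((t+1)(s+1)\bigr)-2s+2.$$
This is positive for $s\geq2$, since $(s+1)\log_2(s+1)\geq 2s-2$.

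For monotonicity I would bound the ratio $Q(s+1,t)/Q(s,t)$ from below by $1$. The binomial part contributes the factor
$$\binom{s+t}{t}\Big/\binom{s+1+t}{t}=\frac{s+1}{s+1+t}.$$
For the power part, $E(s)>0$ and $s+1>s$ give
$$(s+1)^{E(s+1)}/s^{E(s)}\geq (s+1)^{E(s+1)-E(s)}.$$
The increment of the exponent is
$$\Delta:=E(s+1)-E(s)=\log_2\bigl((t+1)(s+2)\bigr)+(s+1)\log_2\tfrac{s+2}{s+1}-2.$$
Dropping the middle term, which is positive, and using $s+2\geq4$, I get $\Delta\geq\log_2(t+1)\geq 1$. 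Therefore
$$(s+1)^{\Delta}\geq 3^{\log_2(t+1)}\geq t+1\geq \frac{s+1+t}{s+1}.$$
The middle inequality holds because $3^{\log_2 x}=x^{\log_2 3}\geq x$ for $x\geq1$. This yields $Q(s+1,t)\geq Q(s,t)$.

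For the constant, monotonicity reduces the claim to evaluating $s=2$. Substituting gives $E(2)=3\log_2\bigl(3(t+1)\bigr)-2$, so $2^{E(2)}=27(t+1)^3/4$. Dividing by $\binom{t+2}{2}$ gives
$$Q(2,t)=\frac{27(t+1)^2}{2(t+2)}.$$
The function $(t+1)^2/(t+2)$ is increasing in $t$, so its minimum is at $t=1$. There it equals $27\cdot4/6=18$, which gives $Q(s,t)\geq18$.

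The only delicate step is the ratio estimate, specifically making sure the crude bound $\Delta\geq\log_2(t+1)$ already beats $(s+1+t)/(s+1)$. This is where the hypothesis $s\geq2$ enters, through $s+2\geq4$ and $s+1\geq3$. If that bound were too weak, I would keep the term $(s+1)\log_2\frac{s+2}{s+1}\geq\log_2 e\cdot\frac{s+1}{s+2}$ as extra slack.
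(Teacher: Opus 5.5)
The paper does not prove this lemma. It quotes it from the authors' earlier work, so there is no in-paper argument to compare your route against.

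Your proof is correct and self-contained. I checked each step:
\begin{itemize}
\item With $k=s+t$, the exponent is $E(s)=(s+1)\log_2((t+1)(s+1))-2s+2$, and this is positive for $s\ge 2$. That positivity justifies $(s+1)^{E(s)}\ge s^{E(s)}$.
\item The binomial ratio is $\binom{s+t}{t}/\binom{s+t+1}{t}=\frac{s+1}{s+t+1}$.
\item Dropping the positive term $(s+1)\log_2\frac{s+2}{s+1}$ and using $s+2\ge 4$ gives $E(s+1)-E(s)\ge\log_2(t+1)\ge 1$.
\item Hence $(s+1)^{E(s+1)-E(s)}\ge 3^{\log_2(t+1)}=(t+1)^{\log_2 3}\ge t+1$.
\item Since $(t+1)(s+1)>s+t+1$ whenever $st>0$, the ratio $Q(s+1,t)/Q(s,t)$ exceeds $1$, so the increase is strict.
\item The base value $Q(2,t)=\frac{27(t+1)^2}{2(t+2)}$ is increasing in $t$ and equals $18$ at $t=1$, which gives the constant.
\end{itemize}

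Your ratio argument proves monotonicity over integers $s\ge 2$. That is exactly what the paper uses: the only application is in the proof of Lemma \ref{lemmamono''}, through $\frac{1}{4}Q(k-u,t)\ge 4.5$ with $s=k-u\ge 2$ an integer.
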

\begin{lemma}\label{lemmamono''}
Let $k\geq u+2\geq t+2$, $\ell\geq t$ and $n\geq L(\max\{k,\ell\},t)$. Then
\begin{equation*}
	\binom{u}{t}\spn{n-u}{k-u}>\binom{k}{t}(\ell-t+1)^{k-u}(k-u).
\end{equation*}
\end{lemma}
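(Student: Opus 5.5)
The plan is to reduce the inequality to the worst case $u=t$, bound $\spn{n-u}{k-u}$ below by a pure power, and then absorb the two loss factors $\binom{k}{t}$ and $(\ell-t+1)^{k-u}$ using Lemma \ref{lemmaqkt} and the logarithmic term in $L$. Throughout, write $s:=k-u\geq2$ and $M:=\max\{k,\ell\}$.

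\textbf{Reduction to $u=t$.} We have
\[
\frac{\binom{k}{t}}{\binom{u}{t}}=\prod_{i=1}^{s}\frac{u+i}{u+i-t}.
\]
Each factor is decreasing in $u$, so the ratio is at most its value at $u=t$, namely $\binom{s+t}{t}$. Hence it suffices to prove
\[
\spn{n-u}{s}>\binom{s+t}{t}(\ell-t+1)^{s}\,s .
\]

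\textbf{Lower bound for the Stirling number.} By (\ref{equnkn-1k}) we have $\spn{m}{s}\geq s\spn{m-1}{s}$. Iterating down to $\spn{s}{s}=1$ gives $\spn{n-u}{s}\geq s^{\,n-u-s}=s^{\,n-k}$. Alternatively, Lemma \ref{lemmaestmtspn} gives the slightly stronger $\tfrac14 s^{\,n-k+2}$ when it applies. So the target becomes
\[
s^{\,n-k-1}>\binom{s+t}{t}(\ell-t+1)^{s}.
\]

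\textbf{Comparison with $L$.} Split the exponent as
\[
n-k-1=\bigl(L(s+t,t)-2s-t+1\bigr)+E,\qquad E:=n-k-L(s+t,t)+2s+t-2 .
\]
By Lemma \ref{lemmaqkt}, $s^{L(s+t,t)-2s-t+1}\geq18\binom{s+t}{t}$, which absorbs the binomial factor. It then remains to show
\[
s^{E}\geq(\ell-t+1)^{s}/18 .
\]
Use $n\geq L(M,t)$ and $k\leq M$, together with $s+t\leq k\leq M$ and the monotonicity of $L$ in its first argument. These give
\[
E\geq L(M,t)-L(s+t,t)-(M-s-t)+s-2
=(M-t+1)\log_2\!\bigl((t+1)(M-t+1)\bigr)-(s+1)\log_2\!\bigl((t+1)(s+1)\bigr)-(M-s-t)+s-2 .
\]
The key point is that $(\ell-t+1)\leq(M-t+1)$, so it suffices to have roughly
\[
E\log_2 s\;\gtrsim\; s\log_2(M-t+1).
\]

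\textbf{Main obstacle.} The genuine difficulty is this last estimate when $s$ is small and $M$ is large, because then $s$ is much smaller than $M-t+1$. In that regime, however, $E$ is of order $(M-t+1)\log_2\bigl((t+1)(M-t+1)\bigr)$, which dominates $s\log_2(M-t+1)$ since $s\leq M-t$. I will handle this by treating two cases.

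\emph{Case $s\leq(M-t+1)/2$.} Here $E\geq\tfrac12(M-t+1)\log_2(M-t+1)$. Since $\log_2 s\geq1$, this exceeds $s\log_2(M-t+1)$ directly.

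\emph{Case $s>(M-t+1)/2$.} Here $M-t+1<2s$. I expect to avoid the weak bound $s^{n-k}$ and instead use Lemma \ref{lemmaspnntkt}(ii) (or the factor $\tfrac14 s^{\,n-k+2}$ from Lemma \ref{lemmaestmtspn}). The needed comparison is then $s^{E}$ against $(2s)^{s}$, which should follow from $E\geq s+\Omega(s)$ and the explicit constant $18$. The boundary cases $s=2,3$ with small $t$ may need a direct numerical check.
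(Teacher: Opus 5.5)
Your outer structure agrees with the paper's. The reduction $\binom{k}{t}\leq\binom{s+t}{t}\binom{u}{t}$ is exactly the paper's closing inequality $\binom{k-u+t}{t}\binom{u}{t}\geq\binom{k}{t}$. Lemma \ref{lemmaqkt} absorbs $\binom{s+t}{t}$ in the same way, and your lower bound for $E$ is correct.

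The gap is the step you flag as the main obstacle, and it fails precisely in the tight regime of the lemma. Take $t=u=1$, $k=\ell=4$, $n=14=L(4,1)$. Then $s=3$, $M-t+1=4$ and $E=1$. Your reduced target $s^{E}\geq(\ell-t+1)^{s}/18$ reads $3\geq 64/18$, which is false. So the chain built on $\spn{n-u}{s}\geq s^{n-k}$ and $Q\geq18$ cannot prove the lemma.

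Your fallback for Case 2 does not repair this. The claim $E\geq s+\Omega(s)$ is false: for $k=\ell=s+t$, $u=t$, $n=\lceil L(k,t)\rceil$ one has $E<s-1$. Replacing $\ell-t+1$ by $2s$ is also too lossy. Even with $\spn{n-u}{s}>\frac14 s^{n-k+2}$, the requirement becomes $4.5\,s^{E+2}>(2s)^{s}$. This fails in the example above ($121.5<216$), and in the family just described it fails for every $s\geq5$.

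Your Case 1 claim $E\geq\frac12(M-t+1)\log_2(M-t+1)$ is also false for small parameters. For example, $t=1$, $u=2$, $k=\ell=4$, $n=14$ gives $E\approx3.25<4$, although there the factor $18$ still saves the needed inequality.

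What is missing is an estimate that keeps $\ell-t+1\leq N:=M-t+1$ exactly and uses the fact that $E$ grows with $N-s$. Use $\spn{n-u}{s}>\frac14 s^{n-k+2}$ throughout. Then it suffices to show $\log_2 4.5+(E+2)\log_2 s>s\log_2 N$, and your own computation gives $E+2\geq N\log_2 N-(s+1)\log_2(s+1)+s$.
\begin{itemize}
\item At $N=s+1$ the inequality amounts to $4.5>(1+1/s)^{s}$, which is true since the right side is below $e$.
\item As $N$ increases, the left side grows at rate at least $\log_2 N+\log_2 e$, because $\log_2 s\geq1$.
\item Meanwhile $s\log_2 N$ grows at rate $\frac{s}{N}\log_2 e<\log_2 e$.
\end{itemize}
So no case split is needed.

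The paper does the same bookkeeping via the claim (\ref{equlemmamonon2}). That claim turns $L(\ell,t)-L(k-u+t,t)$ into $3(u-t)$ plus exactly the exponent producing $\left(\frac{\ell-t+1}{k-t+1}\right)^{k-u}$. The leftover loss $\left(\frac{k-u}{k-t+1}\right)^{k-u}\geq e^{-(u-t+1)}$ is then absorbed by $4.5\cdot4^{u-t}$.
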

\begin{proof}
Clearly, if we verify the inequality above for $\ell=k$ and $n\geq L(k,t)$, then it also holds provided that $\ell<k$ and $n\geq L(k,t)$. Hence it suffices to assume $\ell\geq k$. We claim that 
\begin{equation}\label{equlemmamonon2}
	L(\ell,t)\geq L(k-u+t,t)+3(u-t)+(k-u)\log_{(k-u)}\frac{\ell-t+1}{k-t+1}.
\end{equation}
For each $s\geq k-u+t+1$, we have $s\geq t+3$, and then
\begin{equation}\label{equlktlk-1t}
	L(s,t)-L(s-1,t)=\log_2((t+1)(s-t+1)(1+(s-t)^{-1})^{s-t})>3.
\end{equation}
Hence (\ref{equlemmamonon2}) holds trivially for $\ell=k$. Suppose $\ell>k$, then we have
\begin{align*}
L(\ell,t)-L(k,t)&>(\ell-t+1)\log_2(\ell-t+1)-(k-t+1)\log_2(k-t+1)\\&>(\ell-k)\log_2(k-t+1)>\frac{\ell-k}{\ln2}.
\end{align*}
On the other hand, we have 
$$\frac{k-u}{\ln(k-u)}\cdot\ln\frac{\ell-t+1}{k-t+1}=\frac{k-u}{\ln(k-u)}\cdot\ln\left(1+\frac{\ell-k}{k-t+1}\right)<\frac{k-t}{\ln2}\cdot\frac{\ell-k}{k-t+1}<\frac{\ell-k}{\ln2}.$$
It follows that $L(\ell,t)\geq L(k,t)+\frac{k-u}{\ln(k-u)}\cdot\ln\frac{\ell-t+1}{k-t+1}$, and then our claim follows as (\ref{equlktlk-1t}) gives $L(k,t)> L(k-u+t,t)+3(u-t)$. 

Now we derive from Lemma \ref{lemmaestmtspn} and  (\ref{equlemmamonon2}) that 
\begin{align*}
	\spn{n-u}{k-u}&>\frac{1}{4}(k-u)^{n-k+2}\geq\frac{1}{4}(k-u)^{L(k-u+t,t)+3(u-t)-k+2}\left(\frac{\ell-t+1}{k-t+1}\right)^{k-u}\\
	&=\frac{1}{4}Q(k-u,t)\binom{k-u+t}{t}(k-u)^{2(u-t)+1}\left(\frac{k-u}{k-t+1}\right)^{k-u}(\ell-t+1)^{k-u}\\
	&>4.5\binom{k-u+t}{t}\cdot4^{u-t}e^{-(u-t+1)}(\ell-t+1)^{k-u}(k-u).
\end{align*}
where the expression $Q$ is defined in Lemma \ref{lemmaqkt}. To proceed, we need the function $\binom{x-a}{t}\binom{x}{t}^{-1}$ for $a\geq0$ and $x\geq t$. One can easily check that it is increasing as $x\geq t$ increases. By applying this to $a=u-t$ and $a=2$, respectively, we have $\binom{k-u+t}{t}\binom{k}{t}^{-1}\binom{u}{t}\geq\binom{t+2}{t}\binom{u+2}{t}^{-1}\binom{u}{t}\geq1$, and thus $\binom{k-u+t}{t}\binom{u}{t}\geq\binom{k}{t}$. It follows that $\binom{u}{t}\spn{n-u}{k-u}>\binom{k}{t}(\ell-t+1)^{k-u}(k-u)$, as desired.
\end{proof}
	\begin{lemma}\label{lemmamono}
		Let $k\geq t+2$, $\ell\geq t$ and $n\geq L(\max\{k,\ell\},t)$. The following hold.
		\begin{itemize}
			\item[\rm(i)]The functions  $(\ell-t+1)^{m-t}\spn{n-m}{k-m}$ and  $f(m,k,\ell,t,n)$ are both strictly decreasing as $m\in[t,k-1]$ increases.
			\item[\rm(ii)]Fix a $u\geq t$. If $k\geq u+2$, then $g(m,k,\ell,t,n)\leq f(u,k,\ell,t,n)$ for $u\leq m\leq k$, with equality only if $m=u$.
			\item[\rm(iii)]We have $g(m,k,\ell,t,n)\leq\spn{n-t}{k-t}$
			for $t\leq m\leq k$, with equality only if $m=t$.
		\end{itemize}	
	\end{lemma}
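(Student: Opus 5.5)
The plan is to prove the three parts in order, each building on the previous one, using Lemma \ref{lemmamono''} as the key comparison between consecutive values of $m$. Write $\phi(m):=(\ell-t+1)^{m-t}\spn{n-m}{k-m}$ for $t\le m\le k-1$.

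For (i), we compare consecutive values $m$ and $m+1$ with $t\le m\le k-2$. The ratio $\phi(m)/\phi(m+1)$ equals $\spn{n-m}{k-m}\big/\big((\ell-t+1)\spn{n-m-1}{k-m-1}\big)$. We then invoke Lemma \ref{lemmaspnntkt}(ii) with $m'=n-m$ and $r=k-m\geq2$. This gives $\spn{n-m}{k-m}\ge (2^{(n-m-1)/(k-m-1)}-1)\spn{n-m-1}{k-m-1}$. Since $n\ge L(\max\{k,\ell\},t)$, the exponent is at least $(L-t-1)/(k-t-1)$, and the computation in the proof of Lemma \ref{lemmaspn2lkt} (with $j=\max\{k,\ell\}$) shows this exceeds $\log_2((t+1)(j-t+1)+1)$. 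Alternatively, we may directly use Lemma \ref{lemmaspn2lkt}(i) with $s=m-t$ and $j=\max\{k,\ell\}$; this gives the ratio $>(t+1)(j-t+1)/(\ell-t+1)\ge t+1$.

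For $f$, the extra factor $\binom{m}{t}$ must also be absorbed. We have $\binom{m+1}{t}/\binom{m}{t}=(m+1)/(m+1-t)\le t+1$, so $f(m)>f(m+1)$ follows from the stronger ratio bound. It remains to compare $f(k-1)$ with $f(k)$, but (i) only asks about $m\in[t,k-1]$, so this comparison is not needed here.

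For (ii), monotonicity from (i) gives $f(m)\le f(u)$ for $u\le m\le k-1$, with equality only at $m=u$. The remaining issue is the term $f(k,k,\ell,t,n)=(\ell-t+1)^{k-t}\binom{k}{t}$, and showing it is strictly below $f(u)$ is exactly the content of Lemma \ref{lemmamono''}. Multiplying that lemma by $(\ell-t+1)^{u-t}$ gives
\[
\binom{u}{t}(\ell-t+1)^{u-t}\spn{n-u}{k-u}>\binom{k}{t}(\ell-t+1)^{k-t}(k-u)\ge f(k,k,\ell,t,n),
\]
since $k-u\ge2$. Hence $g(m)=\max\{f(m),f(k)\}\le f(u)$ for $u\le m\le k-1$, with equality forcing $m=u$. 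For $m=k$ we get $g(k)=f(k)<f(u)$.

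For (iii), if $k\ge t+2$ we apply (ii) with $u=t$, noting $f(t,k,\ell,t,n)=\spn{n-t}{k-t}$. This yields $g(m)\le\spn{n-t}{k-t}$, with equality only at $m=t$.

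The main obstacle is (i) for $f$, where the factor $\binom{m}{t}$ must be absorbed. The plan is to lean on Lemma \ref{lemmaspn2lkt}(i) to get the factor $(t+1)(j-t+1)$, which dominates $(\ell-t+1)\cdot(t+1)$ because $j\ge\ell$. The hypothesis $n\ge L(j,t)-1$ of that lemma is satisfied, and $s=m-t\le k-t-2$ holds.
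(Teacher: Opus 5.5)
Your proof is correct and follows the paper's argument essentially verbatim. Part (i) uses the consecutive-ratio bound from Lemma \ref{lemmaspn2lkt}(i) with $s=m-t$ and $j=\max\{k,\ell\}$, together with $\binom{m+1}{t}/\binom{m}{t}\le t+1$. Part (ii) combines (i) with Lemma \ref{lemmamono''} to get $f(u,k,\ell,t,n)>f(k,k,\ell,t,n)$, and part (iii) is the case $u=t$ of (ii). The detour through Lemma \ref{lemmaspnntkt}(ii) that you mention first is unnecessary, since Lemma \ref{lemmaspn2lkt}(i) already packages that computation.
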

	\begin{proof}
		{\rm(i)}\;For each $m\in[t,k-2]$, we infer from Lemma \ref{lemmaspn2lkt} (i) that 
		\begin{align*}
			Q_m&:=\dfrac{f(m+1,k,\ell,t,n)}{f(m,k,\ell,t,n)}=\dfrac{(\ell-t+1)(m+1)}{m-t+1}\cdot\frac{\spn{n-m-1}{k-m-1}}{\spn{n-m}{k-m}}\\
			&<(\ell-t+1)(t+1)\left((t+1)(\max\{k,\ell\}-t+1)\right)^{-1}\leq1.
		\end{align*}
		Hence $f(m,k,\ell,t,n)$ is strictly decreasing as $m\in[t,k-1]$ increases. For the other one, since  $$\dfrac{(\ell-t+1)^{m+1-t}\spn{n-m-1}{k-m-1}}{(\ell-t+1)^{m-t}\spn{n-m}{k-m}}=\dfrac{(m-t+1)Q_m}{m+1}<1$$
		 for $t\leq m\leq k-2$, it is also strictly decreasing with $m$.
		 
		 {\rm(ii)}\;Note that (i) gives  $g(m,k,\ell,t,n)\leq\max\{f(u,k,\ell,t,n), f(k,k,\ell,t,n)\}$ for every $m$ with $u\leq m\leq k-1$. Hence it suffices to prove $f(u,k,\ell,t,n)>f(k,k,\ell,t,n)$, or equivalently $\binom{u}{t}\spn{n-u}{k-u}>\binom{k}{t}(\ell-t+1)^{k-u}$. This follows from Lemma \ref{lemmamono''}.
		 
(iii)\;This follows immediately by applying (ii) to $u=t$.
\end{proof}
\begin{lemma}\label{lemmamono'}
Let $k\geq t+2$, $\ell\geq t+1$ and $n\geq L(\max\{k,\ell\},t)$. Then $\binom{\ell-i-u}{t-u}\spn{n-t-i-(t-u)}{k-t-(t-u)}\leq(\ell-i-t+1)\spn{n-t-i-1}{k-t-1}$ whenever $u\in\{0,1,\ldots,t-1\}$ and  $i\in\{0,1\}$.
\end{lemma}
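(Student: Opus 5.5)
We want, for $u\in\{0,\dots,t-1\}$ and $i\in\{0,1\}$,
\[
\binom{\ell-i-u}{t-u}\spn{n-t-i-(t-u)}{k-t-(t-u)}\leq(\ell-i-t+1)\spn{n-t-i-1}{k-t-1}.
\]
Put $s=t-u\in\{1,\dots,t\}$. For $s=1$ (that is, $u=t-1$) both sides coincide, since $\binom{\ell-i-t+1}{1}=\ell-i-t+1$, so equality holds. For $s\geq2$ we may assume $k-t-s\geq1$; otherwise the left-hand side vanishes by the convention $\spn{n}{a}=0$ for $a\leq0$.

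The plan is to compare consecutive values of $s$. Define
\[
\Phi(s):=\binom{\ell-i-t+s}{s}\spn{n-t-i-s}{k-t-s},
\]
which is the left-hand side, because $\binom{\ell-i-u}{t-u}=\binom{\ell-i-t+s}{s}$. We show $\Phi(s+1)\le\Phi(s)$ for $1\le s\le t-1$ and $k-t-s-1\geq1$. The ratio is
\[
\frac{\Phi(s+1)}{\Phi(s)}=\frac{\ell-i-t+s+1}{s+1}\cdot\frac{\spn{n-t-i-s-1}{k-t-s-1}}{\spn{n-t-i-s}{k-t-s}}.
\]

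The binomial factor satisfies $\frac{\ell-i-t+s+1}{s+1}\le \ell-t+1$, since $\ell-i-t+s+1\le (s+1)(\ell-t+1)$ whenever $\ell\ge t$.

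For the Stirling factor we apply Lemma~\ref{lemmaspn2lkt}(i), with $j=\max\{k,\ell\}\ge k$, the index $s'=s+i$ in place of $s$, and $n'=n$. Its hypothesis requires $s+i\le k-t-2$, which holds because $k-t-s-1\ge1$ when $i=0$. The case $i=1$ needs slightly more care: the Stirling arguments become $\spn{n-t-s-1}{k-t-s}$ and $\spn{n-t-s-2}{k-t-s-1}$. Here we rerun the computation in the proof of Lemma~\ref{lemmaspn2lkt} through Lemma~\ref{lemmaspnntkt}(ii), which compares $\spn{m}{r}$ with $\spn{m-1}{r-1}$ for $m=n-t-s-1$ and $r=k-t-s$. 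Its exponent $\frac{m-1}{r-1}=\frac{n-t-s-2}{k-t-s-1}$ decreases only marginally from the $i=0$ case, and the slack in that proof ($a^{r(1+2\theta)}/2^\theta$ versus $a^r+1$) should absorb the loss of one in $n$. This parallels the remark that (i) holds for $n\ge L(j,t)-1$.

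Either way we obtain that the Stirling ratio is $<1/((t+1)(\max\{k,\ell\}-t+1))$. Multiplying, $\Phi(s+1)/\Phi(s)<\frac{\ell-t+1}{(t+1)(\max\{k,\ell\}-t+1)}\le1$. Hence $\Phi$ is nonincreasing in $s$, so $\Phi(s)\le\Phi(1)$, and $\Phi(1)$ is exactly the right-hand side.

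The main obstacle is the index bookkeeping for $i=1$: one must check that Lemma~\ref{lemmaspn2lkt}(i), or the underlying Lemma~\ref{lemmaspnntkt}(ii), still applies with $n$ replaced by $n-1$ and with the shifted index $s+i$ satisfying $s+i\le k-t-2$. At the boundary, where $k-t-s-1=0$, the next term simply vanishes, so these cases cause no trouble.
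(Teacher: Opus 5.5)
Your proposal is correct and is essentially the paper's proof: both compare consecutive values of the left-hand side as $u$ varies (your $s=t-u$), bound the binomial ratio directly and the Stirling ratio by Lemma~\ref{lemmaspn2lkt}(i), and conclude that the maximum is attained at $u=t-1$, where the left-hand side equals the right-hand side. The only wobble is $i=1$: your first substitution ($s'=s+i$, $n'=n$) gives the wrong lower indices, but no re-run of the proof is needed, since Lemma~\ref{lemmaspn2lkt}(i) is already stated for $n\geq L(j,t)-1$, so you may apply it with $n-i$ in place of $n$ and the same index $s$, exactly as the paper does.
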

\begin{proof}
Write $\psi(u):=\binom{\ell-i-u}{t-u}\spn{(n-i)-t-(t-u)}{k-t-(t-u)},\;0\leq u\leq t-1$. Note that $n-i\geq L(\max\{k,\ell\},t)-1$, then from Lemma \ref{lemmaspn2lkt} (i), we have
$$\psi(s)\geq\frac{t-s+1}{\ell-i-s+1}\cdot(t+1)(\ell-t+1)\psi(s-1)\geq\frac{2(t+1)(\ell-t+1)}{\ell-t+2}\psi(s-1)\geq\psi(s-1)$$
for $s=1,2,\ldots,t-1$. It follows that $\psi(u)\leq\psi(t-1)$, which is nothing but the inequality to be verified.
\end{proof}
\begin{lemma}\label{lemmaw}
Suppose $k\geq t+2$ and $n\geq L(k,t)$. For a partition $G$, set  $\mathcal{W}(G):=\left\{F\in\spn{[n]}{k}:|F\cap G|\geq t\right\}$. Then the following hold. 
\begin{itemize}
\item[\rm(i)]If $G\in\spn{[n]}{t+2}$, then $|\mathcal{W}(G)|\leq h(t+1,k,t,n)$. Moreover, if $G$ contains at most $t$ singletons, then $|\mathcal{W}(G)|<(t+0.6)\spn{n-t}{k-t}.$
\item[\rm(ii)]For all partition $G$ with $|G|=s\in[t+2,k]$, we have $|\mathcal{W}(G)|\leq6h(s,k,t,n)$.
\end{itemize}
\end{lemma}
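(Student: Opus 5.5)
The plan is to decompose $\mathcal{W}(G)$ according to which $t$-subset of $G$ is contained in $F$, and to compare term by term with the analogous decomposition of $\{F:|F\cap[[m]]|\geq t\}$. Inclusion–exclusion gives
$$h(m,k,t,n)=\sum_{j=t}^{m}(-1)^{j-t}\binom{j-1}{t-1}\binom{m}{j}\spn{n-j}{k-j}.$$
Truncating this after the first two terms and using Lemma~\ref{lemmaspn2lkt}~(i) yields
$$h(m,k,t,n)\geq\binom{m}{t}\spn{n-t}{k-t}-t\binom{m}{t+1}\spn{n-t-1}{k-t-1}.$$
Since $\spn{n-t}{k-t}$ dominates $\spn{n-t-1}{k-t-1}$ by a factor of more than $(t+1)(k-t+1)$, this gives $h(m,k,t,n)\geq c\binom{m}{t}\spn{n-t}{k-t}$ with $c$ close to $1$. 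For $m=t+1$ the exact formula is $h(t+1,k,t,n)=(t+1)\spn{n-t}{k-t}-t\spn{n-t-1}{k-t-1}$.

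\textbf{Part (i).} Write $G=\{B_1,\ldots,B_{t+2}\}$ with $b_i=|B_i|$. An $F$ with $|F\cap G|\geq t$ contains $t$ blocks of $G$, namely all blocks except $B_i,B_j$ for some pair $\{i,j\}$. Then $F$ restricted to $B_i\cup B_j$ is a $(k-t)$-partition of that set, or of a proper refinement if $F$ contains $t+1$ blocks of $G$. So
$$|\mathcal{W}(G)|=\sum_{i<j}\left(\spn{b_i+b_j}{k-t}-[\text{overcount}]\right).$$
More concretely, I would count
$$|\mathcal{W}(G)|=\sum_{i<j}\spn{b_i+b_j}{k-t}-\sum_{i}(\text{correction for } F\supseteq G\setminus\{B_i\}),$$
where $F\supseteq G\setminus\{B_i\}$ contributes $\spn{b_i}{k-t-1}$ and is counted $t+1$ times in the pair sum. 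Since $\sum_i b_i=n$, the dominant terms are the pairs with $b_i+b_j$ largest. The sum $\sum_{i<j}\spn{b_i+b_j}{k-t}$ is maximized when all but two blocks are tiny: with $t$ singletons it gives $\spn{n-t}{k-t}+t\spn{b_1+1}{k-t}+\cdots$. The key comparison is that the extremal configuration is $t+1$ singletons plus one big block, giving exactly $(t+1)\spn{n-t}{k-t}-t\spn{n-t-1}{k-t-1}=h(t+1,k,t,n)$.

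If $G$ has at most $t$ singletons, then every pair $\{i,j\}$ satisfies $b_i+b_j\leq n-t$, with equality for at most a bounded number of pairs. Moreover, any pair other than the unique pair containing the large block loses at least one element. I would use (\ref{equnkn-1k}) repeatedly: $\spn{n-t-d}{k-t}\leq(k-t)^{-d}\spn{n-t}{k-t}$. Summing over pairs by deficiency $d$, each unit of non-singleton mass in the small blocks costs a factor $(k-t)\geq2$. A convexity or majorization argument (shifting mass toward the largest block increases the sum) should reduce the problem to $G$ having exactly $t$ singletons and two further blocks of sizes $2$ and $n-t-2$. That case gives about $\spn{n-t}{k-t}+t\spn{n-t-1}{k-t}+\ldots\leq(1+t/(k-t))\spn{n-t}{k-t}$. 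This is not yet $t+0.6$ when $k=t+2$, so in that case I would compute directly: $\spn{m}{2}=2^{m-1}-1$ gives ratio about $1/2$ per deficient pair. The worst case appears to be roughly $t+0.5$, which is below $t+0.6$.

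\textbf{Part (ii).} For $|G|=s\geq t+2$, the union bound gives
$$|\mathcal{W}(G)|\leq\sum_{H\in\binom{G}{t}}\spn{n-|\cup H|}{k-t}\leq\binom{s}{t}\spn{n-t}{k-t}.$$
By the lower bound on $h$ above, it then suffices to show
$$t\binom{s}{t+1}\spn{n-t-1}{k-t-1}\leq\tfrac56\binom{s}{t}\spn{n-t}{k-t}.$$
This reduces to $(s-t)\spn{n-t-1}{k-t-1}\leq\tfrac56\spn{n-t}{k-t}$, which holds by Lemma~\ref{lemmaspn2lkt}~(i) since $s\leq k$ and $(t+1)(k-t+1)>\tfrac65(k-t)$.

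\textbf{Main obstacle.} I expect the main obstacle to be the sharp constant $t+0.6$ in (i). It requires a careful majorization over block-size profiles rather than crude estimates, especially in the regime $k=t+2$ where (\ref{equnkn-1k}) only gives factor $2$.
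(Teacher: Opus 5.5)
Your plan is essentially the paper's proof. For (i), the paper also bounds $|\mathcal{W}(G)|\le\sum_{i<j}\spn{|B_i|+|B_j|}{k-t}$, and proves your shifting claim (moving an element from a smaller block into the largest never decreases this sum, via (\ref{equrecurrence}), i.e.\ convexity of $x\mapsto\spn{x}{k-t}$) to reduce to sizes $(n-t-2,2,1,\ldots,1)$. It then finishes with $\spn{n-t}{k-t}>2\spn{n-t-1}{k-t}$. For (ii), it uses the same union bound together with a pairwise Bonferroni lower bound on $h$; your bound $h(s,k,t,n)\ge\binom{s}{t}\spn{n-t}{k-t}-t\binom{s}{t+1}\spn{n-t-1}{k-t-1}$ is a valid and slightly sharper variant.

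The case $k=t+2$ is not actually an obstacle, since $1+\frac{t}{2}\le t+\frac{1}{2}<t+0.6$ for all $t\ge1$. What remains is only to absorb the constants $t\spn{3}{k-t}+\binom{t}{2}\spn{2}{k-t}\le 3t+\binom{t}{2}$ using $\spn{n-t}{k-t}>18\binom{t+2}{2}$ (Lemma \ref{lemmamono''}). For the first claim of (i), you also need to record $h(t+1,k,t,n)>(t+0.6)\spn{n-t}{k-t}$, which follows from Lemma \ref{lemmaspn2lkt} (i).
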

\begin{proof}
(i)\;If $G$ consists of $t+1$ singletons and one block of size $n-t-1$, then every member of $\mathcal{W}$ contains all but at most one of these singletons, and hence $|\mathcal{W}|=h(t+1,k,t,n)=(t+1)\spn{n-t}{k-t}-t\spn{n-t-1}{k-t-1}$. From Lemma  \ref{lemmaspn2lkt} (i), we have $h(t+1,k,t,n)>(t+0.6)\spn{n-t}{k-t}$. Therefore, it remains to verify that, if $G$ has at most $t$ singletons, then $|\mathcal{W}|<(t+0.6)\spn{n-t}{k-t}$. 

We proceed by considering the function   $u(\mathbf{x}):=\sum_{i<j}\spn{x_i+x_j}{k-t}$ in $D$, where $D$ is the set of decreasing positive integer sequences $\mathbf{x}=(x_1,\ldots,x_{t+2})$ satisfying $x_2\geq2$ and $\sum_{i}x_i=n$. Let $\mathbf{b}=(b_1,\ldots,b_{t+2})$ be a maximum point of $u(\mathbf{x})$ in $D$ with the largest leading term. We claim that  $\mathbf{b}=(n-t-2,2,1,\ldots,1)$. Indeed,  we have first $b_{t+2}=1$. Otherwise, assume that $b_{t+2}\geq2$. Then $\mathbf{b}':=(b_1+1,b_2,\ldots,b_{t+1},b_{t+2}-1)$ also lies in $D$. However, we derive from (\ref{equrecurrence}) and $b_1\geq b_{t+2}$ that 
\begin{align*}
	u(\mathbf{b}')-u(\mathbf{b})=&\sum_{1<j<t+2}\left(\spn{b_1+b_j+1}{k-t}+\spn{b_j+b_{t+2}-1}{k-t}-\spn{b_1+b_j}{k-t}-\spn{b_j+b_{t+2}}{k-t}\right)\\
	=&\sum_{1<j<t+2}\left(\spn{b_1+b_j}{k-t-1}-\spn{b_j+b_{t+2}-1}{k-t-1}\right)\\
	&+(k-t-1)\sum_{1<j<t+2}\left(\spn{b_1+b_j}{k-t}-\spn{b_j+b_{t+2}-1}{k-t}\right)\geq0.
\end{align*}
This contradicts the choice of $\mathbf{b}$. Hence $b_{t+2}=1$, and then we have  $b_{t+1}=\cdots=b_{3}=1$ and $b_2=2$ by using the same argument repeatedly. Thus our claim is true.

Suppose $G$ has at most $t$ singletons, and write $G=\{B_1,B_2,\ldots,B_{t+2}\}$. Then 
 \begin{align*}
|\mathcal{W}|&\leq\sum_{i<j}|\mathcal{W}_{G\setminus\{B_i,B_j\}}|\leq\sum_{i< j}\spn{|B_i|+|B_j|}{k-t}\\
&\leq u(\mathbf{b})=\spn{n-t}{k-t}+t\spn{n-t-1}{k-t}+t\spn{3}{k-t}+\binom{t}{2}\spn{2}{k-t}\\
&<\left(1+\frac{t}{2}\right)\spn{n-t}{k-t}+3t+\binom{t}{2}<(t+0.6)\spn{n-t}{k-t}.
 \end{align*}
To see the last inequality, note that Lemma \ref{lemmamono''} gives $\spn{n-t}{k-t}>18\binom{t+2}{2}$. Then it is easy to check that  $(0.5t-0.4)\spn{n-t}{k-t}>(9t-8)\binom{t+2}{2}\geq3t+\binom{t}{2}$. This finishes the proof of (i).

(ii)\;First, the union bound gives $|\mathcal{W}(G)|\leq\sum_{X\in\binom{G}{t}}\spn{n-|\cup X|}{k-t}\leq\binom{s}{t}\spn{n-t}{k-t}$. To proceed, we lower bound $h(s,k,t,n)$. Recall from (\ref{equfunh}) that $h(s,k,t,n)$ counts the size of $\left\{F\in\spn{[n]}{k}:|F\cap[[s]]|\geq t\right\}$, which is the union of the families of $k$-partitions of $[n]$  containing $X$ with $X$ ranging over the $t$-subsets of $[[s]]$. For $1\leq i\leq t$, there are $\binom{t}{t-i}\binom{s-t}{i}\binom{s}{t}/2$ unordered pairs $\{X,Y\}$ of $t$-subsets of $[[s]]$ with $|X\cup Y|=t+i$. By combining these with Lemma \ref{lemmain-ex}, we obtain that
\begin{align*}
h(s,k,t,n)\geq\binom{s}{t}\spn{n-t}{k-t}-\frac{1}{2}\binom{s}{t}\sum_{i=1}^{t}\binom{t}{t-i}\binom{s-t}{i}\spn{n-t-i}{k-t-i}.
\end{align*}
Form Lemma \ref{lemmaspn2lkt} (i), we have $\spn{n-t}{k-t}>(t(k-t))^i\spn{n-t-i}{k-t-i}\geq(i!)^2\binom{t}{i}\binom{s-t}{i}\spn{n-t-i}{k-t-i}$ for $1\leq i\leq t$. It follows that
\begin{align*}
h(s,k,t,n)\binom{s}{t}^{-1}\spn{n-t}{k-t}^{-1}\geq1-\frac{1}{2}\sum_{i=1}^{t}\frac{1}{(i!)^2}>1-\frac{1}{2}\cdot\frac{\pi^2}{6}>\frac{1}{6},
\end{align*}
and thus $h(s,k,t,n)\geq\frac{1}{6}\binom{s}{t}\spn{n-t}{k-t}\geq\frac{1}{6}|\mathcal{W}(G)|$, as desired.
\end{proof}
\begin{lemma}\label{lemmar2kllk}
We have $r_2(n,k,\ell,t)>r_2(n,\ell,k,t)$ for $n>k>\ell\geq t+2$.
\end{lemma}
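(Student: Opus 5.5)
We reduce the inequality to a monotonicity statement about ratios of consecutive Stirling numbers, which then follows from the log-concavity in Lemma~\ref{lemmalogconcavity}.

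\emph{Step 1: rewrite both products.} Write $N=n-t-1$, $a=k-t$ and $b=\ell-t$, so that $a>b\geq2$. Since $n>k$, we have $N\geq a$. Apply the recurrence (\ref{equrecurrence}) in the form $\spn{N+1}{b}=\spn{N}{b-1}+b\spn{N}{b}$ to the bracket in (\ref{equfunr2}):
\[
(t+1)\spn{n-t}{\ell-t}-t\spn{n-t-1}{\ell-t-1}=\spn{N}{b-1}+(t+1)b\spn{N}{b}.
\]
Therefore
\[
r_2(n,k,\ell,t)=\spn{N}{a-1}\spn{N}{b-1}+(t+1)\,b\spn{N}{a-1}\spn{N}{b}.
\]
By symmetry,
\[
r_2(n,\ell,k,t)=\spn{N}{b-1}\spn{N}{a-1}+(t+1)\,a\spn{N}{b-1}\spn{N}{a}.
\]
The first terms cancel in the difference, so the claim is equivalent to
\[
b\spn{N}{a-1}\spn{N}{b}>a\spn{N}{b-1}\spn{N}{a}.
\]
All the Stirling numbers appearing here are positive, because $1\leq b-1<a\leq N$.

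\emph{Step 2: monotonicity of a ratio.} Dividing by $\spn{N}{a-1}\spn{N}{b-1}$, the inequality becomes $\rho(b)>\rho(a)$, where
\[
\rho(x):=x\spn{N}{x}\Big/\spn{N}{x-1}.
\]
It therefore suffices to show that $\rho$ is strictly decreasing on $2\leq x\leq a$, i.e. that $\rho(x+1)<\rho(x)$ for $2\leq x\leq a-1$.

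For such $x$ we have $N\geq x\geq2$, so Lemma~\ref{lemmalogconcavity} gives $\spn{N}{x+1}\spn{N}{x-1}\leq\frac{x-1}{x}\spn{N}{x}^2$. Hence
\[
\rho(x+1)=\frac{(x+1)\spn{N}{x+1}}{\spn{N}{x}}\leq\frac{(x+1)(x-1)}{x}\cdot\frac{\spn{N}{x}}{\spn{N}{x-1}}=\frac{x^2-1}{x^2}\,\rho(x)<\rho(x).
\]
Chaining these inequalities from $x=b$ up to $x=a-1$ yields $\rho(a)<\rho(b)$, which is what Step 1 required.

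\emph{Main obstacle.} The only delicate point is checking that the hypotheses of Lemma~\ref{lemmalogconcavity} hold for every $x$ in the chain. This needs $N\geq a-1$, which holds, together with the positivity of $\spn{N}{a}$ so that the divisions are legitimate. Positivity is exactly where the assumption $n>k$ enters, since it gives $N\geq a$. The rest of the argument is the algebraic cancellation in Step 1.
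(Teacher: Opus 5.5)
Your proof is correct and is essentially the paper's argument. Your $\rho(x)=x\spn{N}{x}/\spn{N}{x-1}$ equals $\spn{N+1}{x}/\spn{N}{x-1}-1$, which is exactly the ratio the paper shows to be strictly decreasing via (\ref{equrecurrence}) and Lemma~\ref{lemmalogconcavity}; you merely apply the recurrence before forming $r_2(n,k,\ell,t)-r_2(n,\ell,k,t)$ rather than after, and your chain starting at $x=2$ (where $\spn{N}{1}=1>0$) avoids the paper's endpoint $a=2$, which would put $\spn{m-1}{0}=0$ in a denominator.
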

\begin{proof}
Let us write $h(m,a):=\spn{m}{a}/\spn{m-1}{a-1}$ for $2\leq a\leq m$. Clearly $h(m,a)-h(m,a-1)$ equals $\spn{m-1}{a-1}^{-1}\spn{m-1}{a-2}^{-1}\left(\spn{m}{a}\spn{m-1}{a-2}-\spn{m}{a-1}\spn{m-1}{a-1}\right)$. From (\ref{equrecurrence}) and Lemma \ref{lemmalogconcavity} we infer that $\spn{m}{a}\spn{m-1}{a-2}-\spn{m}{a-1}\spn{m-1}{a-1}$ is equal to
\begin{align*}
a\spn{m-1}{a}\spn{m-1}{a-2}-(a-1)\spn{m-1}{a-1}^2&\leq\left(\frac{a(a-2)}{a-1}-(a-1)\right)\spn{m-1}{a-1}^2\\
&=-\frac{1}{a-1}\spn{m-1}{a-1}^2<0.
\end{align*}
It follows that $h(m,a)<h(m,a-1)$ for $2\leq a\leq m$, and hence 
\begin{equation*}\label{equhma}
\spn{m}{a}\spn{m-1}{b-1}<\spn{m-1}{a-1}\spn{m}{b}\;\mbox{for}\;2\leq b<a\leq m.
\end{equation*}
Therefore,
\begin{align*}
r_2(n,k,\ell,t)-r_2(n,\ell,k,t)&=(t+1)\left(\spn{n-t-1}{k-t-1}\spn{n-t}{\ell-t}-\spn{n-t}{k-t}\spn{n-t-1}{\ell-t-1}\right)>0
\end{align*}
as $\ell<k$.
\end{proof}
\begin{lemma}\label{lemmar1r2}
Suppose $k\geq\ell\geq t+2$ and $n\geq2L(k,t)$. The following hold.
\begin{itemize}
\item[\rm(i)]If $\ell\geq2t+2$, then $r_1(n,k,\ell,t)>r_2(n,k,\ell,t)$.
\item[\rm(ii)]If $\ell\leq2t+1$ and $(k,\ell)\neq(2t+1,2t+1)$ or $(4,3)$, then  $r_1(n,k,\ell,t)<r_2(n,k,\ell,t)$.
\end{itemize}
\end{lemma}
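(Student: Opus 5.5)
We compare the two explicit products after normalizing both by $P:=\spn{n-t-1}{k-t-1}\spn{n-t}{\ell-t}$. Set $u:=\ell-t\geq2$, $\epsilon_j:=\spn{n-t-j}{k-t-j}/\spn{n-t-1}{k-t-1}$ and $\delta:=\spn{n-t-1}{\ell-t-1}/\spn{n-t}{\ell-t}$. Lemma \ref{lemmaspn2lkt} (ii) makes $\epsilon_2$ and $\delta$ tiny, at most $((t+1)(k-t+1))^{-2}$. In these variables (\ref{equfunr1}) and (\ref{equfunr2}) become
\begin{align*}
\frac{r_1(n,k,\ell,t)}{P}&=\Big(u-\binom{u}{2}\epsilon_2+\binom{u}{3}\epsilon_3-\cdots\Big)\Big(1+t\spn{n-t}{\ell-t}^{-1}\Big),\\
\frac{r_2(n,k,\ell,t)}{P}&=t+1-t\delta .
\end{align*}
To first order, then, $r_1$ is about $uP$ and $r_2$ is about $(t+1)P$. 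So the sign is decided by comparing $u=\ell-t$ with $t+1$, which is exactly comparing $\ell$ with $2t+1$.

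\emph{Case (i), $\ell\geq2t+2$, so $u\geq t+2$.} Apply Lemma \ref{lemmain-ex} with $s=2$ to get $r_1/P\geq u-\binom{u}{2}\epsilon_2$. We have $\binom{u}{2}\epsilon_2\leq u^2/(2(t+1)^2(k-t+1)^2)<1/2$ because $u\leq k-t$. Hence $r_1/P>u-1/2\geq t+3/2>t+1\geq r_2/P$, and (i) follows with room to spare.

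\emph{Case (ii), $u\leq t+1$.} Here the leading term of $r_1/P$ is at most that of $r_2/P$.
\begin{itemize}
\item If $u\leq t$, use the $s=1$ upper bound $\sum_j(\cdots)\leq u$ from Lemma \ref{lemmain-ex}. This gives $r_1/P\leq u(1+t\spn{n-t}{\ell-t}^{-1})\leq t+t^2\spn{n-t}{\ell-t}^{-1}$. We have $r_2/P\geq t+1-t\delta$, so it suffices that $t\delta+t^2\spn{n-t}{\ell-t}^{-1}<1$. This holds because $\spn{n-t}{\ell-t}$ is huge (for $u\geq2$ it is at least $\spn{n-t}{2}=2^{n-t-1}-1$) and $\delta$ is tiny.
\item If $u=t+1$, i.e.\ $\ell=2t+1$, the first-order terms tie and the second-order terms must be compared. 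Lemma \ref{lemmain-ex} with $s=3$ gives
\[
\frac{r_1}{P}\leq\Big(t+1-\binom{t+1}{2}\epsilon_2+\binom{t+1}{3}\epsilon_3\Big)\Big(1+t\spn{n-t}{\ell-t}^{-1}\Big),\qquad \frac{r_2}{P}=t+1-t\delta .
\]
So we need, up to negligible cross terms, $\binom{t+1}{2}\epsilon_2>t\delta+t(t+1)\spn{n-t}{t+1}^{-1}$. When $k>\ell=2t+1$, I would argue $\epsilon_2\gg\delta$: since $k-t>\ell-t$, the ratio $\spn{m}{a}/\spn{m-1}{a-1}$ decreases in $a$ (shown inside the proof of Lemma \ref{lemmar2kllk}). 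Lemma \ref{lemmaestmtspn} gives the size, $\epsilon_2\approx(k-t-1)^{-(n-k)}$ and $\delta\approx(\ell-t)^{-(n-\ell)}$, and these differ by an exponential factor. The term $t(t+1)\spn{n-t}{t+1}^{-1}\approx(t+1)^{-(n-t)}$ is also much smaller than $\epsilon_2$.
\end{itemize}
When $k=\ell=2t+1$ the two sides are comparable and the sign genuinely depends on $n$, which is why that case is excluded. Similarly $(4,3)$ is the case $t=1$, $\ell=3$, $k=4$, where the small numbers break the estimate, and it is also excluded. The case $\ell=t+2$, $t=1$ needs care ($u=2=t+1$): it falls under $\ell=2t+1$ and is covered by the same argument when $k\geq5$.

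I expect the main obstacle to be this tied case $\ell=2t+1<k$. The proof needs explicit quantitative forms of $\epsilon_2$, $\delta$ and $\spn{n-t}{t+1}^{-1}$ from Lemma \ref{lemmaestmtspn}, with $n\geq2L(k,t)$ used to absorb the polynomial factors. Getting the constants right so that only $(2t+1,2t+1)$ and $(4,3)$ fail will be the delicate part. I would first try to prove $\spn{n-t-1}{t}\,\spn{n-t-1}{k-t-1}^{\,2}\cdot\frac{t+1}{2}>\spn{n-t}{t+1}\spn{n-t-2}{k-t-2}\spn{n-t-1}{k-t-1}$ directly via Lemma \ref{lemmaspnntkt}.
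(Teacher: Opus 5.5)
Your treatment of (i) and of the subcase $\ell\le 2t$ of (ii) is correct and matches the paper's first-order comparison. The gap is in the tied case $\ell=2t+1<k$, which is the only delicate part of the lemma.

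First, your size estimates there are wrong. Lemma \ref{lemmaestmtspn} gives $\epsilon_2\approx\bigl(\frac{k-t-2}{k-t-1}\bigr)^{n-t-2}$ and $\delta\approx\bigl(\frac{t}{t+1}\bigr)^{n-t-1}$, not $(k-t-1)^{-(n-k)}$ and $(\ell-t)^{-(n-\ell)}$. For $k=2t+2$, which is not excluded once $t\ge2$, the two bases coincide and $\epsilon_2/\delta$ stays bounded (roughly $\frac{t+1}{t}$). So the claimed exponential separation, on which your justification of $\binom{t+1}{2}\epsilon_2>t\delta+\cdots$ rests, is false there.

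Second, the monotonicity in $a$ from Lemma \ref{lemmar2kllk} cannot supply it. The ratios $\epsilon_2=\spn{n-t-2}{k-t-2}/\spn{n-t-1}{k-t-1}$ and $\delta=\spn{n-t-1}{\ell-t-1}/\spn{n-t}{\ell-t}$ have different upper indices. When $k=\ell+1$ they also have the same lower index, so that lemma says nothing about them.

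Third, the inequality you propose to prove first is reversed. Dividing it by $\spn{n-t}{t+1}\spn{n-t-1}{k-t-1}^{2}$, it reads $\frac{t+1}{2}\delta>\epsilon_2$. But $r_1<r_2$ needs $\frac{t+1}{2}\epsilon_2>\delta$, up to lower-order terms. The reversed inequality fails, e.g., for $k\ge 2t+3$ and large $n$.

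The missing point is that for $t\ge2$ no separation is needed. Since $\binom{t+1}{2}>t$, a constant-factor bound $\epsilon_2\gtrsim\delta$ suffices. The paper gets $\spn{n-t-2}{k-t-2}\spn{n-t}{\ell-t}>0.8\,\spn{n-t-1}{k-t-1}\spn{n-t-1}{\ell-t-1}$ from Lemma \ref{lemmaestmtspn}, using only $k-t-1\ge\ell-t$. It then concludes from $t+0.1-0.8\bigl(\binom{t+1}{2}-0.1\bigr)<0$.

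Alternatively, you could get $\epsilon_2>\delta$ exactly. Combine the monotonicity in $a$ with the fact that $\spn{m}{a}/\spn{m-1}{a-1}$ increases in $m$. The latter follows from (\ref{equrecurrence}) and the inequality $\spn{m}{a}\spn{m-1}{a-2}<\spn{m}{a-1}\spn{m-1}{a-1}$ in the proof of Lemma \ref{lemmar2kllk}. Then $\binom{t+1}{2}\epsilon_2-t\delta>\binom{t}{2}\epsilon_2$ absorbs the remaining terms.

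For $t=1$ the coefficients tie ($\binom{2}{2}=t$), and the term $t(t+1)\spn{n-1}{2}^{-1}$ equals $2\delta$, so it is not negligible. Exactly, with your normalization, $(r_1-r_2)/P=3\delta-\epsilon_2(1+\delta)$, so $\epsilon_2$ must exceed roughly $3\delta$. This real exponential gap exists precisely when $k\ge5$. The paper writes $D=3\spn{n-2}{k-2}-2^{n-2}\spn{n-3}{k-3}$ and compares $(1.5)^{n-3}$ with $2^{n-2}$. For $k=4$ one has $\epsilon_2\approx2\delta$ and $D=2^{n-3}-3>0$. That, not ``small numbers'', is why $(4,3)$ is excluded.
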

\begin{proof}
Write $D=r_1(n,k,\ell,t)-r_2(n,k,\ell,t)$. From Lemmas \ref{lemmain-ex} and  \ref{lemmaspn2lkt} (ii), we have 
\begin{equation}\label{equfunr1lbub}
\ell-t-\frac{1}{(t+1)^2}<\frac{r_1(n,k,\ell,t)}{\spn{n-t-1}{k-t-1}\left(\spn{n-t}{\ell-t}+t\right)}<\ell-t
\end{equation}
and
$$t+1-\frac{1}{(t+1)(\ell-t+1)^2}<\frac{r_2(n,k,\ell,t)}{\spn{n-t-1}{k-t-1}\spn{n-t}{\ell-t}}<t+1.$$

If $\ell\geq2t+2$, then $D\spn{n-t-1}{k-t-1}^{-1}\spn{n-t}{\ell-t}^{-1}>\ell-2t-1-\frac{1}{(t+1)^2}>0$ and thus $r_1(n,k,\ell,t)>r_2(n,k,\ell,t)$. If $\ell\leq2t$, then $D\spn{n-t-1}{k-t-1}^{-1}<(\ell-2t-1+\frac{1}{2})\spn{n-t}{\ell-t}+t(\ell-t)\leq t(\ell-t)-\frac{1}{2}\spn{n-t}{\ell-t}$, and so $D<0$ as Lemma \ref{lemmamono''} gives $\spn{n-t}{\ell-t}>2t(\ell-t)$.

It remains to prove $D<0$ provided that $k>\ell=2t+1$ and $(k,\ell)\neq(4,3)$. We proceed by  estimating the Stirling partition numbers involved. It is readily seen that $n-t\geq2L(k,t)-t>2(k-t)(1+\log_2(k-t))$. Then from Lemma \ref{lemmaestmtspn}, we derive 
$$\left(1-\frac{1}{2e^2}\right)\frac{(\ell-t)^{n-t}}{(\ell-t)!}<\spn{n-t}{\ell-t}<\frac{(\ell-t)^{n-t}}{(\ell-t)!}$$
and
$$\left(1-\frac{1}{2e^2}\right)\frac{(\ell-t-1)^{n-t-1}}{(\ell-t-1)!}<\spn{n-t-1}{\ell-t-1}\leq\frac{(\ell-t-1)^{n-t-1}}{(\ell-t-1)!}.$$
We note that the relation above holds trivially for $\ell=t+2$. Write $\alpha=1-\frac{1}{2e^2}$ for short. Note that $\alpha>0.9$. It follows that $\spn{n-t}{\ell-t}\spn{n-t-1}{\ell-t-1}^{-1}>\alpha\left(\frac{\ell-t}{\ell-t-1}\right)^{n-t-1}$. Similarly, we have $\spn{n-t-1}{k-t-1}\spn{n-t-2}{k-t-2}^{-1}<\alpha^{-1}\left(\frac{k-t-1}{k-t-2}\right)^{n-t-2}$ for $k\geq t+3$. Suppose first $t=1,\ell=3$ and $k\geq5$. Now a direct computation gives $D=3\spn{n-2}{k-2}-2^{n-2}\spn{n-3}{k-3}$, and thus
\begin{align*}
	D\spn{n-3}{k-3}^{-1}<3\alpha^{-1}\left(\frac{k-2}{k-3}\right)^{n-3}-2^{n-2}\leq3\alpha^{-1}\cdot(1.5)^{n-3}-2^{n-2}<0
\end{align*}
as $n\geq2L(k,t)=4+2k(\log_2k+1)>10$.

 Suppose $t\geq2$. Write $r_0=r_0(n,k,\ell,t)=\sum_{j=2}^{\ell-t}(-1)^{j}\binom{\ell-t}{j}\spn{n-t-j}{k-t-j}$ for short. Note that $r_1(n,k,\ell,t)=\left((\ell-t)\spn{n-t-1}{k-t-1}-r_0\right)\left(\spn{n-t}{\ell-t}+t\right)$. Now Lemmas \ref{lemmain-ex} and \ref{lemmaspn2lkt} (ii), and $k>2t+1$ yield $r_0>\left(\binom{t+1}{2}-0.1\right)\spn{n-t-2}{k-t-2}$, and hence 
\begin{align*}
	D&=\left(t(\ell-t)+t\spn{n-t-1}{\ell-t-1}\right)\spn{n-t-1}{k-t-1}-r_0\left(\spn{n-t}{\ell-t}+t\right)\\
	&<(t+0.1)\spn{n-t-1}{k-t-1}\spn{n-t-1}{\ell-t-1}-\left(\binom{t+1}{2}-0.1\right)\spn{n-t-2}{k-t-2}\spn{n-t}{\ell-t}.
\end{align*}
To see the inequality above, note that $t\geq2$ guarantees $\ell=2t+1\geq t+3$, then by Lemma \ref{lemmaspn2lkt} (ii), we obtain that $\spn{n-t-1}{\ell-t-1}>(t+1)^2(\ell-t+1)^2>10t(\ell-t)$. These together with $k\geq\ell+1$ and $\alpha^2>0.9^2>0.8$ yield
\begin{align*}
\frac{\spn{n-t-2}{k-t-2}\spn{n-t}{\ell-t}}{\spn{n-t-1}{k-t-1}\spn{n-t-1}{\ell-t-1}}>\alpha^{2}\cdot\frac{k-t-1}{k-t-2}\left(\frac{\ell-t}{\ell-t-1}\cdot\frac{k-t-2}{k-t-1}\right)^{n-t-1}>0.8.
\end{align*}
Hence for $t\geq2$, the quantity  $D\spn{n-t-1}{k-t-1}^{-1}\spn{n-t-1}{\ell-t-1}^{-1}$ is less than 
\begin{align*}
	t+0.1-0.8\left(\binom{t+1}{2}-0.1\right)< t+0.2-0.4t(t+1)<0.
\end{align*}
This finishes the proof.
\end{proof}
\begin{lemma}\label{lemmar1kllk}
	If $k>\ell\geq t+2$ and $n\geq\max\{2L(k,t),t+1+(k-t)(\ell-t)\}$, then $r_1(n,k,\ell,t)>r_1(n,\ell,k,t)$.  
\end{lemma}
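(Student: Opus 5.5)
Set $a=k-t$, $b=\ell-t$ and $N=n-t$, so that $a>b\geq2$ and $N\geq ab+1$. By (\ref{equfunr1}), $r_1(n,k,\ell,t)=P(N,a,b)\bigl(\spn{N}{b}+t\bigr)$. Here $P(N,a,b)$ is the number of $a$-partitions of $[N]$ having a singleton block among $b$ fixed elements; this is $|\mathcal{A}(k,\ell,t)|$ after deleting the $t$ fixed singletons. Symmetrically, $r_1(n,\ell,k,t)=P(N,b,a)\bigl(\spn{N}{a}+t\bigr)$. The plan is to bound both $P$'s by first-order inclusion--exclusion and then compare the main terms through the ratio $\spn{m}{c}/\spn{m-1}{c-1}$.

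\emph{Step 1 (bounding $P$).} Lemma \ref{lemmain-ex} with $s=2$, together with Lemma \ref{lemmaspn2lkt} (ii), gives
\[
P(N,a,b)\geq b\spn{N-1}{a-1}-\binom{b}{2}\spn{N-2}{a-2}>\Bigl(b-\tfrac{1}{2(t+1)^2}\Bigr)\spn{N-1}{a-1}.
\]
The union bound gives $P(N,b,a)\leq a\spn{N-1}{b-1}$. Also $\spn{N}{a}+t<(1+\epsilon)\spn{N}{a}$ and $\spn{N}{b}+t>\spn{N}{b}$, where $\epsilon$ is tiny because $\spn{N}{a}$ is huge (Lemma \ref{lemmamono''}). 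It therefore suffices to prove
\[
\frac{\spn{N}{b}/\spn{N-1}{b-1}}{\spn{N}{a}/\spn{N-1}{a-1}}>\frac{a}{b}\,(1+\epsilon')
\]
for a small explicit $\epsilon'$ collecting the error factors.

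\emph{Step 2 (comparing the ratios).} The proof of Lemma \ref{lemmar2kllk} already shows that the left-hand side exceeds $1$; here we need the quantitative gain $a/b$. I would use Lemma \ref{lemmaestmtspn}, which applies since $N\geq 2L(k,t)-t$ is large compared with $a\ln a$. It gives $\spn{m}{c}=(1-\delta)c^m/c!$ with $0\leq\delta\leq c(ec)^{-\gamma}$, where $\gamma\geq1$ is the constant in that lemma (named so as not to clash with the block count $c$). Hence
\[
\frac{\spn{N}{c}}{\spn{N-1}{c-1}}=(1\pm\delta')\Bigl(\frac{c}{c-1}\Bigr)^{N-1},
\]
and for $c=b=2$ we can use the exact values $\spn{N}{2}=2^{N-1}-1$ and $\spn{N-1}{1}=1$. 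The left-hand side is then about $\bigl(\tfrac{b(a-1)}{(b-1)a}\bigr)^{N-1}=\bigl(1+\tfrac{a-b}{a(b-1)}\bigr)^{N-1}$. Here the hypothesis $N-1\geq ab$ enters. Using $\ln(1+x)\geq x/(1+x)$ with $x=\tfrac{a-b}{a(b-1)}$, we get $ab\,x/(1+x)=b(a-b)/(ab-b)\cdot a\cdot\tfrac{1}{1}\geq a-b$ up to constant factors, so the quantity is at least roughly $\exp(a-b)$. On the other hand $a/b=1+(a-b)/b\leq\exp((a-b)/b)\leq\exp((a-b)/2)$. This leaves a multiplicative slack of about $\exp((a-b)/2)\geq e^{1/2}$.

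\emph{Step 3 (absorbing the errors; the main obstacle).} The delicate case is $a-b=1$, where the slack is only a constant factor, roughly $e^{1/2}$ or better. This factor must dominate the product of $(1-\delta)^{-2}$ from Lemma \ref{lemmaestmtspn}, the inclusion--exclusion loss $1-\tfrac{1}{2b(t+1)^2}$, and $(1+t/\spn{N}{a})$. I would first handle $b=2$ using the exact formula for $\spn{N}{2}$, where the gain is of order $(4/3)^{N}$-type and hence enormous. For $b\geq3$, I would check the constants: $\delta\leq a(ea)^{-\gamma}$ is at most about $1/(2e^2)$ here, as in Lemma \ref{lemmar1r2}. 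This gives $(1-\delta)^{2}>0.8$, which comfortably fits under $e^{1/2}\approx1.65$. If the crude bound $\ln(1+x)\geq x/(1+x)$ loses too much for small $b$, I would instead use the sharper estimate $N-1\geq 2L(k,t)-t-1$, which is far larger than $ab$ and makes the gain exponential in $N$.
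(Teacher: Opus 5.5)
Your argument is correct and is essentially the paper's proof. Both use first-order inclusion--exclusion for the two products with the $+t$ terms absorbed, then Lemma \ref{lemmaestmtspn} to replace each $\spn{N}{c}/\spn{N-1}{c-1}$ by $(c/(c-1))^{N-1}$ up to $\alpha^{\pm1}$, and finally $N-1\geq ab$. The only difference is cosmetic: the paper weakens $\frac{(a-1)b}{a(b-1)}$ to $\frac{b^2}{b^2-1}$ and uses $e^y\geq ey$ to absorb the prefactor $b/a$, whereas your exponent $\frac{a(a-b)}{a-1}\geq a-b$ (exactly, not ``up to constant factors''), compared against $a/b\leq e^{(a-b)/2}$, already gives a uniform slack of at least $e^{1/2}$.

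So the case analysis in Step~3 and the fallback are unnecessary. The fallback's claim is also false in general: for $t=1$, $k=100$, $\ell=99$ one has $2L(k,t)-t-1<1531<ab=9702$, which is why the lemma carries the extra hypothesis $n\geq t+1+(k-t)(\ell-t)$.
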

\begin{proof}
From Lemmas \ref{lemmain-ex} and  \ref{lemmaspn2lkt} (ii), we have 
\begin{align}
		\frac{r_1(n,k,\ell,t)}{r_1(n,\ell,k,t)}&>\frac{\ell-t-0.5}{k-t}\cdot\frac{\spn{n-t-1}{k-t-1}\left(\spn{n-t}{\ell-t}+t\right)}{\left(\spn{n-t}{k-t}+t\right)\spn{n-t-1}{\ell-t-1}}>\frac{\ell-t-0.5}{1.1(k-t)}\cdot\frac{\spn{n-t-1}{k-t-1}\spn{n-t}{\ell-t}}{\spn{n-t}{k-t}\spn{n-t-1}{\ell-t-1}},\label{equlemmar1kllk1}
	\end{align}
where the second inequality follows as Lemma \ref{lemmaspn2lkt} (ii) gives $\spn{n-t}{k-t}>10t$. By the same argument as in Lemma \ref{lemmar1r2}, we derive 
$$\alpha\left(\frac{j-t}{j-t-1}\right)^{n-t-1}<\spn{n-t}{j-t}\spn{n-t-1}{j-t-1}^{-1}<\alpha^{-1}\left(\frac{j-t}{j-t-1}\right)^{n-t-1}\;\mbox{for}\;j=k,\ell,$$
where $\alpha=1-\frac{1}{2e^2}$. These together with (\ref{equlemmar1kllk1}) and $\ell-t-0.5\geq\frac{3}{4}(\ell-t)$ yield 
	\begin{align*}
		\frac{r_1(n,k,\ell,t)}{r_1(n,\ell,k,t)}&>\alpha^2\cdot\frac{\ell-t-0.5}{1.1(k-t)}\left(\frac{k-t-1}{k-t}\cdot\frac{\ell-t}{\ell-t-1}\right)^{n-t-1}\\
		&\geq\frac{6(\ell-t)}{11(k-t)}\left(\frac{(\ell-t)^2}{(\ell-t)^2-1}\right)^{n-t-1}>\frac{6(\ell-t)}{11(k-t)}e^{\frac{n-t-1}{(\ell-t)^2}}\\
		&\geq\frac{6(\ell-t)}{11(k-t)}\cdot\frac{e(n-t-1)}{(\ell-t)^2}\geq\frac{6e}{11}>1,
	\end{align*}
 where the second inequality holds as $\alpha^2=\left(1-\frac{1}{2e^2}\right)^2>0.9^2>0.8$.
\end{proof}
\begin{lemma}\label{lemmafunhmono}
Suppose $r\geq3$, $k_1\geq k_2\geq\cdots\geq k_r\geq t+2$ and $n\geq L(k_1,t)$. Then $\varphi(m,a)\leq\varphi(t+1,r)$ for $m\geq t+1$ and $1\leq a\leq r$, with equality precisely if $m=t+1$ and $k_a=k_r$.
\end{lemma}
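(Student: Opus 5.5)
The inequality splits into two independent comparisons, and I will handle them one after the other. Write
\[
\varphi(m,a)=h(m,k_a,t,n)\prod_{i\neq a}\spn{n-m}{k_i-m}.
\]
First I fix $m=t+1$ and vary $a$. Then I fix $a$ and show $\varphi(m,a)<\varphi(t+1,a)$ for $m\geq t+2$. Chaining the two gives $\varphi(m,a)\leq\varphi(t+1,a)\leq\varphi(t+1,r)$, and equality forces equality in both steps.

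\textbf{Step 1 ($m=t+1$).} Here $h(t+1,k,t,n)=(t+1)\spn{n-t}{k-t}-t\spn{n-t-1}{k-t-1}$. Comparing $\varphi(t+1,a)$ with $\varphi(t+1,r)$, all factors with $i\notin\{a,r\}$ cancel. It therefore suffices to show that for $k=k_a\geq\ell=k_r$,
\[
h(t+1,\ell,t,n)\spn{n-t-1}{k-t-1}\geq h(t+1,k,t,n)\spn{n-t-1}{\ell-t-1},
\]
with strict inequality when $k>\ell$. The left side is exactly $r_2(n,k,\ell,t)$ and the right side is $r_2(n,\ell,k,t)$. So this step is Lemma~\ref{lemmar2kllk}, which needs only $n>k$, and that holds here. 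When $k_a=k_r$ the two sides are identical.

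\textbf{Step 2 ($m\geq t+2$, $a$ fixed).} I bound $h(m,k_a,t,n)$ from above by the union bound,
\[
h(m,k_a,t,n)\leq\binom{m}{t}\spn{n-t}{k_a-t}.
\]
From below, $h(t+1,k_a,t,n)>t\spn{n-t}{k_a-t}$, using Lemma~\ref{lemmaspn2lkt}(i). The ratio $h(m,k_a,t,n)/h(t+1,k_a,t,n)$ is then less than $\binom{m}{t}/t$. For the other factors, each $i\neq a$ has $k_i\geq m$, since otherwise $\spn{n-m}{k_i-m}=0$ and there is nothing to prove. Iterating Lemma~\ref{lemmaspn2lkt}(i) $m-t-1$ times (with $j=k_1$, valid since $n\geq L(k_1,t)$) gives
\[
\spn{n-t-1}{k_i-t-1}\geq\bigl((t+1)(k_1-t+1)\bigr)^{m-t-1}\spn{n-m}{k_i-m}.
\]
This holds whenever the intermediate indices satisfy $s\leq k_i-t-2$. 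At the boundary $k_i=m$ I use $\spn{n-m}{0}$-type degeneracies, or handle $k_i=m$ directly, since then $\spn{n-m}{k_i-m}=1$ is tiny compared with $\spn{n-t-1}{m-t-1}$. Since $r\geq3$, at least two indices $i\neq a$ contribute, so the product over $i\neq a$ gains a factor of at least $((t+1)(k_1-t+1))^{2(m-t-1)}$. I then need
\[
\binom{m}{t}<t\bigl((t+1)(k_1-t+1)\bigr)^{2(m-t-1)}\quad\text{for } t+2\leq m\leq k_1.
\]
This holds because
\[
\binom{m}{t}\leq\binom{t+1}{t}\prod_{j=t+2}^{m}\frac{j}{j-t}\leq(t+1)\bigl((t+1)/2\bigr)^{m-t-1},
\]
which is dominated by the right side. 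Alternatively I can cite Lemma~\ref{lemmamono''}-style estimates. This gives strict inequality for $m\geq t+2$.

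\textbf{Main obstacle.} I expect the hardest part to be the bookkeeping at the edge cases in Step 2: when some $k_i$ equals $m$ or $m+1$, Lemma~\ref{lemmaspn2lkt}(i) does not apply, because of its range $s\leq k-t-2$, and I would need the trivial estimates $\spn{n-m}{0}$ or $\spn{n-m}{1}=1$ against $\spn{n-t-1}{k_i-t-1}\geq\spn{n-t-1}{1}=1$ instead. In particular, if $k_i=t+2=m$ the gain for that index is only $\spn{n-t-1}{1}/\spn{n-t-2}{0}$, which is degenerate. I will treat $\spn{x}{0}=0$ for $x>0$, so $\varphi$ vanishes there. If this convention fails, I would instead use $m\leq\min_{i\neq a}k_i$ and put all the gain in the index $i$ with largest $k_i$.

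For the equality case, Step 2 is strict and Step 1 is strict unless $k_a=k_r$, so equality holds precisely when $m=t+1$ and $k_a=k_r$.
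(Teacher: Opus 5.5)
Your proof is correct and follows the paper's argument: Step 1 is the paper's reduction to Lemma~\ref{lemmar2kllk} via $r_2$, and Step 2 is the same comparison, namely the union bound $h(m,k_a,t,n)\le\binom{m}{t}\spn{n-t}{k_a-t}$ against $h(t+1,k_a,t,n)>t\spn{n-t}{k_a-t}$, combined with iterating Lemma~\ref{lemmaspn2lkt}(i) over the $r-1\geq 2$ indices $i\neq a$. Your edge-case worry is settled by the paper's convention $\spn{n}{a}=0$ for $a\le 0$ (so $\varphi(m,a)=0$ whenever $m\ge k_i$ for some $i\neq a$), and there is one harmless slip: for $j\ge t+2$ one only has $j/(j-t)\le (t+2)/2$, not $(t+1)/2$ (e.g.\ $t=1$, $m=3$ gives $\binom{3}{1}=3>2$), but the corrected bound $(t+1)\bigl((t+2)/2\bigr)^{m-t-1}$ is still far below $t\bigl((t+1)(k_1-t+1)\bigr)^{2(m-t-1)}$.
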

\begin{proof}
By a routine computation, 
\begin{align*}
	\varphi(t+1,a)-\varphi(t+1,r)=(r_2(n,k_r,k_a,t)-r_2(n,k_a,k_r,t))\prod_{i\neq a,r}\spn{n-t-1}{k_i-t-1}.
\end{align*}
It follows from Lemma \ref{lemmar2kllk} that $\varphi(t+1,a)\leq\varphi(t+1,r)$ and that equality holds implies $k_a=k_r$. It remains to prove $\varphi(m,a)<\varphi(t+1,a)$ for $m\geq t+2$. Note that  $h(m,k_a,t,n)\leq\sum_{X\in\binom{[m]}{t}}\spn{n-|\cup X|}{k_a-t}=\binom{m}{t}\spn{n-t}{k_a-t}$. This yields 
\begin{align*}
h(t+1,k_a,t,n)=(t+1)\spn{n-t}{k_a-t}-t\spn{n-t-1}{k_a-t-1}>t\binom{m}{t}^{-1}h(m,k_a,t,n), 
\end{align*}
and then we deduce from Lemma \ref{lemmaspn2lkt} (i) that
\begin{align*}
\frac{\varphi(t+1,a)}{\varphi(m,a)}&>t\binom{m}{t}^{-1}((t+1)(k_1-t+1))^{(r-1)(m-t-1)}\\
&\geq t\binom{t+2}{2}^{-1}(t+1)(k_1-t+1)>1.
\end{align*}
This completes the proof.
\end{proof}
\begin{lemma}\label{lemmathm3case21}
Suppose that $k\geq m\geq t+2,\;\ell\geq t$ and $n\geq L(\max\{k,\ell\},t)$. Then
\begin{equation}\label{equlemmathm3case21}
h(t+1,k,t,n)>(m-t)(m-t+1)g(m,k,\ell,t,n)\binom{m}{t}^{-1}.
\end{equation}
\end{lemma}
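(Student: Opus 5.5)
Since $m\geq t+2$ and $k\geq m$, I will split according to whether $m\leq k-2$ or $m\in\{k-1,k\}$. This matches the description of $g$ stated after (\ref{equfung}): for $n\geq L(\max\{k,\ell\},t)$ we have $g(m,k,\ell,t,n)=f(m,k,\ell,t,n)$ when $m\leq k-2$, and $g=f(k,k,\ell,t,n)$ otherwise.

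\emph{Lower bound for the left-hand side.} Using (\ref{equnkn-1k}) (or simply $\spn{n-t-1}{k-t-1}<\spn{n-t}{k-t}$), I get $h(t+1,k,t,n)=(t+1)\spn{n-t}{k-t}-t\spn{n-t-1}{k-t-1}>\spn{n-t}{k-t}$. It then suffices to show
$$\spn{n-t}{k-t}\geq(m-t)(m-t+1)\,g(m,k,\ell,t,n)\binom{m}{t}^{-1}.$$

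\emph{Case $t+2\leq m\leq k-2$.} Here the right-hand side is $(m-t)(m-t+1)(\ell-t+1)^{m-t}\spn{n-m}{k-m}$. I apply Lemma \ref{lemmaspn2lkt} (i) iteratively with $j=\max\{k,\ell\}$ and $s=0,1,\ldots,m-t-1$. All these $s$ satisfy $s\leq k-t-2$, so this gives
$$\spn{n-t}{k-t}>((t+1)(j-t+1))^{m-t}\spn{n-m}{k-m}.$$
It remains to check $(t+1)^{m-t}(j-t+1)^{m-t}\geq(m-t)(m-t+1)(\ell-t+1)^{m-t}$. Since $j\geq\ell$, this reduces to $(t+1)^{m-t}\geq(m-t)(m-t+1)$ with $m-t\geq2$. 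That inequality fails for $t=1$ (for instance $2^2<6$). So for small $t$ I will borrow one or two spare factors from $(j-t+1)/(\ell-t+1)\geq1$ or from the strictness margin. Failing that, I will instead apply the lemma with one extra step using $k\geq m+2$: $\spn{n-m}{k-m}$ versus $\spn{n-m-1}{k-m-1}$ is not needed, but the exponent in the proof of Lemma \ref{lemmaspn2lkt} actually gives $a^{r+\theta}$ with spare room. More robust is to use Lemma \ref{lemmamono''} with $u=m$, which yields $\binom{m}{t}\spn{n-m}{k-m}>\binom{k}{t}(\ell-t+1)^{k-m}(k-m)$. That controls the wrong direction, so I will rather use the slack in $L$. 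Specifically, the base $2^{(n-t-1)/(k-t-1)}$ exceeds $a^{1+2\theta}$, with $a=(t+1)(j-t+1)\geq 2\cdot 3$. The resulting extra factor $a^{2\theta(m-t)}$ is at least $(m-t+1)$-ish, and combined with $a^{m-t}\geq 6^{m-t}$ against $(m-t)(m-t+1)2^{m-t}$ this should close. I expect this numerical verification for $t=1$ to be the main obstacle.

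\emph{Case $m\in\{k-1,k\}$.} Here the target is $\spn{n-t}{k-t}\geq(m-t)(m-t+1)(\ell-t+1)^{k-t}\binom{k}{t}\binom{m}{t}^{-1}$, and $\binom{k}{t}/\binom{m}{t}\leq k/(k-t)\leq 2$. I will use Lemma \ref{lemmaestmtspn} in the form $\spn{n-t}{k-t}>\frac14(k-t)^{n-k+2}$, together with $n\geq L(\max\{k,\ell\},t)$. The quantity $(k-t)^{n-k+2}$ is at least $2^{(k-t+1)\log_2((t+1)(j-t+1))}\geq((t+1)(j-t+1))^{k-t+1}$. This dominates $8(k-t+1)^2(\ell-t+1)^{k-t}$ once we note $(t+1)^{k-t+1}(j-t+1)\geq 2^{k-t+1}\cdot(k-t+1)\geq 8(k-t+1)^2$ for $k-t\geq 2$. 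If this is tight, I can alternatively invoke Lemma \ref{lemmamono''} with $u=t$ and $\ell$ replaced suitably, which gives $\spn{n-t}{k-t}>\binom{k}{t}(\ell-t+1)^{k-t}(k-t)$. This already absorbs the $\binom{k}{t}(\ell-t+1)^{k-t}$ part, leaving only the polynomial factor $(m-t+1)\binom{m}{t}^{-1}\cdot(m-t)/(k-t)$. That factor is at most $1$ whenever $\binom{m}{t}\geq m-t+1$, which holds for $m\geq t+2$, $t\geq1$.
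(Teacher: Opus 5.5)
Your case $m\in\{k-1,k\}$ is sound once you take your second option. Lemma~\ref{lemmamono''} with $u=t$ gives $\spn{n-t}{k-t}>\binom{k}{t}(\ell-t+1)^{k-t}(k-t)$, and $(k-t)\binom{m}{t}\ge(m-t)(m-t+1)$ finishes it, essentially as in the paper. Your first attempt there is unreliable (e.g.\ $\binom{k}{t}/\binom{k-1}{t}=k/(k-t)\le2$ needs $k\ge2t$), but you do not need it.

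\textbf{The gap is in the case $t+2\le m\le k-2$.} You weaken the left-hand side to $h(t+1,k,t,n)>\spn{n-t}{k-t}$. You are then left needing $(t+1)^{m-t}\ge(m-t)(m-t+1)$, which fails for $t=1$, $m\in\{3,4,5\}$, and none of your proposed repairs closes this:
\begin{itemize}
\item The ratio $(j-t+1)/(\ell-t+1)$ equals $1$ whenever $\ell\ge k$.
\item The extra factor $a^{2\theta(m-t)}$ you point to (really about $(a^2/2)^{\theta(m-t)}$) comes from the slack in $L$ inside the proof of Lemma~\ref{lemmaspn2lkt}. Since $\theta=1/(j-t-1)$, it tends to $1$ as $j\to\infty$ with $m$ fixed, so it is not ``$(m-t+1)$-ish''.
\end{itemize}
Concretely, take $t=1$, $m=3$, $k=\ell=100$ and $n=767=\lceil L(100,1)\rceil$. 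The two one-step ratios supplied by Lemma~\ref{lemmaspnntkt}~(ii) are about $2^{765/98}-1\approx222.7$ and $2^{764/97}-1\approx233.9$. Their product is roughly $5.2\cdot10^4$, whereas $(m-t)(m-t+1)(\ell-t+1)^{m-t}=6\cdot10^4$ is required.

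The missing idea is to keep the factor you discarded from $h$. By Lemma~\ref{lemmaspn2lkt}~(i),
\[
\spn{n-t}{k-t}>(t+1)(k-t+1)\spn{n-t-1}{k-t-1}\ge3(t+1)\spn{n-t-1}{k-t-1},
\]
so
\[
h(t+1,k,t,n)=(t+1)\spn{n-t}{k-t}-t\spn{n-t-1}{k-t-1}>(t+0.6)\spn{n-t}{k-t}\ge1.6\spn{n-t}{k-t}.
\]
Combining this with your iteration of Lemma~\ref{lemmaspn2lkt}~(i) gives
\[
h(t+1,k,t,n)\spn{n-m}{k-m}^{-1}>1.6\,(t+1)^{m-t}(\ell-t+1)^{m-t}\ge1.6\cdot2^{m-t}(\ell-t+1)^{m-t}.
\]
Since $1.6\cdot2^{x}>x(x+1)$ for every $x\ge2$ (the paper uses $2^x\ge\frac23x(x+1)$), this closes the case $m\le k-2$ for all $t$, including $t=1$. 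This is exactly the paper's argument, and the paper uses the same factor $1.6$ in the case $m\in\{k-1,k\}$ as well.
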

\begin{proof}
By Lemma \ref{lemmaspn2lkt} (i), we have $h(t+1,k,t,n)>(t+0.6)\spn{n-t}{k-t}$.
Suppose $m\leq k-2$, then Lemma \ref{lemmamono} (ii) gives $g(m,k,\ell,t,n)=f(m,k,\ell,t,n)=(\ell-t+1)^{m-t}\binom{m}{t}\spn{n-m}{k-m}$. On the other hand, we derive from Lemma \ref{lemmaspn2lkt} (i) that
\begin{align*}
\left(t+0.6\right)\spn{n-t}{k-t}\spn{n-m}{k-m}^{-1}&>1.6((t+1)(\ell-t+1))^{m-t}\\
&>(m-t)(m-t+1)(\ell-t+1)^{m-t},
\end{align*}
where in the second step we used $2^{x}\geq 2x(x+1)/3$ for $x\geq2$ (by setting $x=m-t$). Then (\ref{equlemmathm3case21}) holds.

Suppose $m\in\{k-1,k\}$. Now $g(m,k,\ell,t,n)=(\ell-t+1)^{k-t}\binom{k}{t}$, and then the right-hand side in (\ref{equlemmathm3case21})  is at most $(m-t)(\ell-t+1)^{k-t}\binom{k}{t}$ as $\binom{m}{t}\geq m-t+1$. Thus (\ref{equlemmathm3case21}) follows as Lemma \ref{lemmamono''} gives  $h(t+1,k,t,n)>1.6\binom{k}{t}(\ell-t+1)^{k-t}(k-t)$.
\end{proof}
\begin{lemma}\label{lemmathm3case22}
Let $r\geq4$, $k_1\geq k_2\geq\cdots\geq k_r\geq t+2$ and $n\geq L(k_1,t)$. Let $m_i\in[t+2,k_i]$ and $\ell_i\in[t,k_1]$ for  $i=1,2,\ldots,r$. Then for all $a\neq b\in[r]$, we have
$$\left(\prod_{i\neq a,b}\spn{n-t-2}{k_i-t-2}\right)\left(\prod_{j=a,b}g(m_j,k_j,\ell_j,t,n)\binom{m_j}{t}^{-1}\right)<\varphi(t+1,r).$$
\end{lemma}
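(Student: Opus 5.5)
We will compare factor by factor against $\varphi(t+1,r)=h(t+1,k_r,t,n)\prod_{i\neq r}\spn{n-t-1}{k_i-t-1}$. Each $g$-factor and each $\spn{n-t-2}{k_i-t-2}$ factor will be charged against one factor of $\varphi(t+1,r)$. The only subtlety is that $\varphi(t+1,r)$ carries the larger factor $h(t+1,k_r,t,n)$ at index $r$, while the other indices carry only $\spn{n-t-1}{k_i-t-1}$.

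First we bound each $g$-factor. Since $m_j\ge t+2$ and $n\ge L(k_1,t)\ge L(\max\{k_j,\ell_j\},t)$ (as $\ell_j\le k_1$), Lemma \ref{lemmamono} (ii) with $u=t+2$ (when $k_j\ge t+4$) gives $g(m_j,k_j,\ell_j,t,n)\le f(t+2,k_j,\ell_j,t,n)=(\ell_j-t+1)^2\binom{t+2}{t}\spn{n-t-2}{k_j-t-2}$. Dividing by $\binom{m_j}{t}\ge\binom{t+2}{t}$ then yields
$$g(m_j,k_j,\ell_j,t,n)\binom{m_j}{t}^{-1}\le(k_1-t+1)^2\spn{n-t-2}{k_j-t-2}.$$
The remaining cases $k_j\in\{t+2,t+3\}$ give $g=(\ell_j-t+1)^{k_j-t}\binom{k_j}{t}$. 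This is bounded by Lemma \ref{lemmamono''} (or directly) by a quantity of the same type, at most $(k_1-t+1)^2\binom{t+3}{t}\binom{m_j}{t}^{-1}$ times a constant, which in turn is at most $(k_1-t+1)^2\cdot\spn{n-t-2}{k_j-t-2}\cdot c$ with $\spn{\cdot}{1}=1$ or $\spn{\cdot}{0}$ handled separately. For the case $k_j=t+2$ we simply use $g\binom{m_j}{t}^{-1}\le 3^2$ combined with the trivial comparison.

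Next, Lemma \ref{lemmaspn2lkt} (i) applied with $s=1$ (valid when $k_i\ge t+3$) gives
$$\spn{n-t-1}{k_i-t-1}>(t+1)(k_1-t+1)\spn{n-t-2}{k_i-t-2}.$$
So each index $i\notin\{a,b\}$ contributes a gain factor of at least $(t+1)(k_1-t+1)$ when $k_i\ge t+3$; when $k_i=t+2$ both sides have ratio $\spn{n-t-1}{1}/\spn{n-t-2}{0}$, which we treat with the convention $\spn{\cdot}{0}=0$ or $1$ as appropriate. For the indices $a,b$ we need the stronger bound $\spn{n-t-1}{k_j-t-1}>(k_1-t+1)^2\spn{n-t-2}{k_j-t-2}$. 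This follows from Lemma \ref{lemmaspn2lkt} (i) only up to a factor $(t+1)/(k_1-t+1)$, so here we use the extra slack: $r\ge4$ gives at least two indices outside $\{a,b\}$, each contributing a factor $(t+1)(k_1-t+1)$. At index $r$ we additionally have $h(t+1,k_r,t,n)>(t+0.6)\spn{n-t}{k_r-t}\ge\spn{n-t-1}{k_r-t-1}$, so the extra factor $h$ can only help, whether or not $r\in\{a,b\}$.

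Multiplying these estimates, it suffices that
$$(k_1-t+1)^4<\big((t+1)(k_1-t+1)\big)^{2}\cdot\big((t+1)(k_1-t+1)\big)^{2}\big((k_1-t+1)^{-2}\big)^{0},$$
i.e. that the total gain from the four ratios (two at $a,b$, at least two elsewhere) beats $(k_1-t+1)^4$. This is where I expect the main obstacle. When some $k_i$ equal $t+2$ or $t+3$, the factor $\spn{n-t-2}{k_i-t-2}$ degenerates ($\spn{\cdot}{0}$ or $\spn{\cdot}{1}=1$), and the gain from Lemma \ref{lemmaspn2lkt} (i) is unavailable. For such indices I would instead use the exact values $\spn{n-t-1}{1}=1$, $\spn{n-t-1}{2}=2^{n-t-2}-1$, together with the huge size $2^{n-t-2}\ge((t+1)(k_1-t+1))^{k_1-t+1}$ (as in (\ref{equlemmat+2small1})), and the bound $g\le\binom{k_j}{t}(k_1-t+1)^{k_j-t}$. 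The product comparison then reduces to a polynomial-versus-exponential inequality in $k_1$, which holds comfortably since $n\ge L(k_1,t)$.
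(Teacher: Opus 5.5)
Your argument is correct in the generic case, but the boundary cases are not handled correctly. The generic case is $k_a,k_b\ge t+4$ and $k_i\ge t+3$ for all $i\notin\{a,b\}$. There, Lemma~\ref{lemmamono}~(ii) with $u=t+2$ bounds the left-hand side by $(k_1-t+1)^4\prod_{i=1}^r\spn{n-t-2}{k_i-t-2}$. Lemma~\ref{lemmaspn2lkt}~(i) with $s=1$, together with $h(t+1,k_r,t,n)\ge\spn{n-t-1}{k_r-t-1}$, gives $\varphi(t+1,r)>((t+1)(k_1-t+1))^r\prod_{i=1}^r\spn{n-t-2}{k_i-t-2}$, and $r\ge 4$ finishes.

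The boundary-case problems are these.

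\textbf{The case $k_j=t+2$.} Here $g(m_j,k_j,\ell_j,t,n)\binom{m_j}{t}^{-1}=(\ell_j-t+1)^2$. This can be as large as $(k_1-t+1)^2$, since $\ell_j$ ranges up to $k_1$, so your bound $3^2$ is false.

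\textbf{The case $k_j=t+3$.} The quotient is at most $\frac{t+3}{3}(\ell_j-t+1)^3$, which is cubic rather than quadratic. Conversely, Lemma~\ref{lemmaspn2lkt}~(i) with $s=1$ does apply when $k_i=t+3$, so no gain is lost at such indices.

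\textbf{The missing idea.} Once $k_j=t+2$ for some $j\in\{a,b\}$, bookkeeping that uses only $h\ge\spn{n-t-1}{k_r-t-1}$ breaks down. Take $r=4$, $k_1\ge k_2\ge t+3$, $k_3=k_4=t+2$, $\{a,b\}=\{3,4\}$ and $\ell_3=\ell_4=k_1$. The left-hand side equals $(k_1-t+1)^4\spn{n-t-2}{k_1-t-2}\spn{n-t-2}{k_2-t-2}$. Without the size of $h$ you only get $\varphi(t+1,4)>(t+1)^2(k_1-t+1)^2\spn{n-t-2}{k_1-t-2}\spn{n-t-2}{k_2-t-2}$, which does not dominate it once $k_1-t+1>t+1$.

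The fix is as follows. If $k_i=t+2$ for some $i\notin\{a,b\}$, the left-hand side is $0$, because $\spn{n-t-2}{0}=0$. Otherwise $k_j=t+2$ forces $k_r=t+2$ and $r\in\{a,b\}$. Then the factor $h(t+1,t+2,t,n)=(t+1)(2^{n-t-1}-1)-t$ at index $r$ is exponentially large, since $2^{n-t-1}\ge((t+1)(k_1-t+1))^{k_1-t+1}$ for $n\ge L(k_1,t)$. So $h$ does not merely ``help''; it is indispensable there. With this, and with $\spn{n-t-1}{2}=2^{n-t-2}-1$ for $k_j=t+3$, your route can be completed.

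The paper avoids this casework. It first uses Lemma~\ref{lemmafunhmono} to replace $\varphi(t+1,r)$ by $\varphi(t+1,a)$, i.e.\ it moves the large factor $h$ to index $a$. There Lemma~\ref{lemmathm3case21} gives $h(t+1,k_a,t,n)>6\,g(m_a,k_a,\ell_a,t,n)\binom{m_a}{t}^{-1}$ uniformly in $k_a$, including $m_a\in\{k_a-1,k_a\}$. The outside gain $(t+1)^2(k_1-t+1)^2$ then only has to absorb $g(m_b,k_b,\ell_b,t,n)\binom{m_b}{t}^{-1}$. When $k_b\ge t+3$, Lemma~\ref{lemmamono}~(ii) with $u=t+1$ bounds this by $(t+1)(\ell_b-t+1)\spn{n-t-1}{k_b-t-1}$; when $k_b=t+2$ it is just $(\ell_b-t+1)^2$. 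Your version avoids Lemmas~\ref{lemmafunhmono} and~\ref{lemmathm3case21} in the generic case, but pays for it with the boundary analysis above.
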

\begin{proof} 
Denote by $L$ the expression of the left-hand side above. From Lemma \ref{lemmafunhmono}, it suffices to prove $L<\varphi(t+1,a)= h(t+1,k_a,t,n)\prod_{i\neq a}\spn{n-t-1}{k_i-t-1}$. From $r\geq4$ and Lemma \ref{lemmaspn2lkt} (i), we derive $\prod_{i\neq a,b}\spn{n-t-1}{k_i-t-1}\geq(t+1)^2(k_1-t+1)^2\prod_{i\neq a,b}\spn{n-t-2}{k_i-t-2}$. With  this inequality and Lemma \ref{lemmathm3case21}, which gives $h(t+1,k_a,t,n)>6g(m_a,k_a,\ell_a,t,n)\binom{m_a}{t}^{-1}$, it is sufficient to  establish
\begin{equation}\label{equlemmathm3}
	g(m_b,k_b,\ell_b,t,n)\binom{m_b}{t}^{-1}<6(t+1)^{2}(k_1-t+1)^2\spn{n-t-1}{k_b-t-1}.
\end{equation}
When $k_b=t+2$, we have $m_b=t+2$, and then $g(m_b,k_b,\ell_b,t,n)\binom{m_b}{t}^{-1}=(\ell_b-t+1)^2$. Thus (\ref{equlemmathm3}) holds trivially. Suppose $k_b\geq t+3$. Note that the right-hand side of (\ref{equlemmathm3}) is larger than $(t+1)(\ell_b-t+1)\spn{n-t-1}{k_b-t-1}=f(t+1,k_b,\ell_b,t,n)$, then consequently it is larger than $g(m_b,k_b,\ell_b,t,n)$ from Lemma \ref{lemmamono} (ii). Hence (\ref{equlemmathm3}) also holds. 
\end{proof}
	\section*{Acknowledgments}
	B. Lv is supported by National Natural Science Foundation of China (12571347 \& 12131011), and Beijing Natural Science Foundation (1252010).
	\
	\addcontentsline{toc}{chapter}{Bibliography}
	
	{
		}
\end{document}